\newif\iffigure
\title{Tur\'an-type problems for long cycles in random and pseudo-random graphs}
\author{Michael Krivelevich \thanks{School of Mathematical Sciences,
	Raymond and Beverly Sackler Faculty of Exact Sciences, Tel Aviv
	University, Tel Aviv, 6997801, Israel. Email: krivelev@post.tau.ac.il.
	Partially supported by USA-Israel BSF grants 2014361 and 2018267, and by ISF grant 1261/17.} 
	\and Gal Kronenberg
	\thanks{School of Mathematical Sciences, Raymond and Beverly Sackler Faculty of Exact Sciences, Tel Aviv University, Tel Aviv, 6997801, Israel, and Mathematical Institute, University of Oxford, Oxford, UK. Email: kronenberg@maths.ox.ac.uk.}
	\and Adva Mond \thanks{School of Mathematical Sciences, Raymond and Beverly Sackler Faculty of Exact Sciences, Tel Aviv University, Tel Aviv, 6997801, Israel, and Department of Pure Mathematics and Mathematical Statistics, Centre for Mathematical Sciences, University of Cambridge, Wilberforce Road, Cambridge CB3 0WB, UK. Email: advamond@gmail.com.}}
\date{\today}
\DeclarePairedDelimiter\ceil{\lceil}{\rceil}
\DeclarePairedDelimiter\floor{\lfloor}{\rfloor}
\theoremstyle{plain}
\newtheorem{theorem}{Theorem}[section]
\crefname{theorem}{Theorem}{Theorems}
\newtheorem{lemma}[theorem]{Lemma}
\newtheorem{claim}[theorem]{Claim}
\newtheorem{proposition}[theorem]{Proposition}
\newtheorem{corollary}[theorem]{Corollary}
\newtheorem{definition}[theorem]{Definition}
\newtheorem{fact}[theorem]{Fact}
\theoremstyle{remark}
\newtheorem{remark}[theorem]{Remark}
\newcommand{\whp}{w.h.p.\ }
\newcommand{\mbinom}[2]{\Bigl(\begin{array}{@{}c@{}}#1\\#2\end{array}\Bigr)}
\definecolor{RED}{rgb}{1,0,0}\definecolor{BLUE}{rgb}{0,0,1}
\begin{document}
\maketitle

\begin{abstract}
	We study the Tur\'an number of long cycles in random graphs and in pseudo-random graphs.
	Denote by $ex(G(n,p),H)$ the random variable counting the number of edges in a largest subgraph of
	$G(n,p)$ without a copy of $H$.
	We determine the asymptotic value of $ex(G(n,p), C_t)$ where $C_t$ is a cycle of length $t$, 
	for $p\geq \frac Cn$ and 
	$A \log n \leq t \leq (1 - \varepsilon)n$.
	The typical behavior of $ex(G(n,p), C_t)$ depends substantially on the parity of $t$.
	In particular, our results match the classical result of Woodall on the Tur\'an number of long cycles, and can be seen as its random version, showing that the transference principle holds here as well.
	In fact, our techniques apply in a more general sparse pseudo-random
	setting.
	We also prove a robustness-type result, showing the likely existence of cycles of prescribed lengths in a random subgraph of a graph with a nearly optimal density. Finally, we also present further applications of our main tool (the Key Lemma) for proving results on Ramsey-type problems about cycles in sparse random graphs. 
\end{abstract}

\section{Introduction}

One of the most central topics  in extremal graph theory is the so-called Tur\'an-type problems.
Recall that $ex(n,H)$ denotes the maximum possible number of edges in a graph on $n$ vertices without having $H$ as a subgraph.
Determining the value of $ex(n,H)$ for a fixed graph $H$ has become one of the most central problems in extremal combinatorics and there is a rich literature investigating it. 
Mantel~\cite{Mantel} proved in 1907 that $ex(n, K_3) = \floor{\tfrac{n^2}4}$; Tur\'an~\cite{Turan} found the value of $ex(n, K_t)$ for $t\geq 3$ in 1941.
In 1968, Simonovits~\cite{Simon} showed that the result of Mantel can be extended for an odd cycle of a fixed length, that is, $ex(n,C_{2t+1})=\floor{\frac {n^2}4}$, where the extremal example is the complete bipartite graph\footnote{Throughout the paper we denote by $P_t$ and $C_t$ the path and the cycle of length $t$ (i.e., the path and the cycle with $t$ edges), respectively.
}.
For the general case, it was proved in 1946 by Erd\H{o}s and Stone~\cite{ErdosStone} that $ex(n,H)=\left(1-\frac 1{\chi(H)-1}+o(1)\right)\mbinom n2$, where $\chi(H)$ is the chromatic number of the fixed graph $H$.
Note that when $H$ is a graph with chromatic number 2,  an even cycle for instance, then from the above result we can only obtain that $ex(n,H)=o(n^2)$.
Bondy and Simonovits~\cite{BondySimon} proved in 1974 that for even cycles we have $ex(n,C_{2t})=O(n^{1+1/t})$.
Unfortunately, a matching lower bound is known only for the cases where $t=2,3,5$.
For a survey see~\cite{SimonSurvey, Verstraete}. 

In this paper we consider the case where $H = C_t$ and $t \coloneqq t(n)$ tends to infinity with $n$.
In this direction, it was proved by Erd\H{o}s and Gallai~\cite{ErdosGallai}, among other things, that if $t\coloneqq t(n)$, then $ex(n,P_t)=\floor{\frac 12(t-1)n}$.
For long cycles, it was shown by Woodall~\cite{Woodall} that if $t\geq \frac 12(n+3)$ then $ex(n,C_t)=\binom {t-1}2+\binom {n-t+2}2$, where the extremal example is given by two cliques intersecting in exactly one vertex.
In the same paper, Woodall also showed that for odd cycles $C_t$ shorter than $\frac 12(n+3)$, the trivial bound $ex(n, C_t) \geq \floor{\tfrac{n^2}4}$ is still tight.

In the past few decades several generalizations of the classical Tur\'an number $ex(n, H)$ were suggested and many results have been established in this area.
Denote by $ex(G,H)$ the number of edges in a largest subgraph of a graph $G$ containing no copy of $H$.
Note that the value of $ex(G,H)$ is bounded from below by the number of edges in $G$ that are not contained in any copy of $H$.
As a consequence, if the number of copies of $H$ in $G$ is much smaller than the number of edges in $G$, then we obtain that $ex(G,H)\geq (1-o(1))e(G)$.
Thus, it makes sense to restrict our attention to graphs $G$ for which the number of copies of $H$ is at least proportional to the number of edges. 

We focus on the case where the host graph $G$ is either a random graph or pseudo-random graph.
Given a positive integer $n$ and a real number $p \in [0, 1]$, we let  $G(n,p)$ be the \textit{binomial random graph}, that is, a graph sampled from the family of all labeled
graphs on the vertex set $[n] \coloneqq \{1,\dots,n \}$, where 
each pair of elements of $[n]$ forms an edge  with probability $p\coloneqq p(n)$, independently.
We denote by $ex(G(n,p),H)$ the number of edges in a largest subgraph of $G(n,p)$ without a copy of $H$ (note that $ex(G(n,p),H)$ is a random variable).
Clearly, in this case we want to consider only the values of $p$ for which $G(n,p)$ contains a copy of $H$ with high probability (w.h.p., i.e., with probability tending to 1 as $n\to \infty$), and in fact, the number of copies of $H$ in $G$ is typically ``large enough''.
   
 For fixed-size graphs $H$, this parameter has already been considered by various researchers.
 It is known that the threshold probability for a random graph to have the property that a typical edge is contained in a copy of $H$, for a fixed graph $H$, is $n^{-1/m_2(H)}$, where $m_2(H)$ is the \textit{maximum 2-density} and defined to be $m_2(H)= \max\left\{\frac{e(H')-1}{v(H')-2}\mid H'\subseteq H,\ v(H')\geq 3 \right\}$ (see \cite{FrBook} for more details).
 Therefore, it makes sense to consider graphs $G(n,p)$ for the regime $p = \Omega(n^{-1/m_2(H)})$.
 The cases $H=K_3$, $H=C_4$, and $H=K_4$ were solved by Frankl and R\"{o}dl~\cite{FranklRodlC3},  F\"{u}redi~\cite{FurediC4}, and by Kohayakawa, \L uczak, and R\"odl~\cite{KLRK4}, respectively.
 For fixed odd cycles, it was shown by Haxell, Kohayakawa, and \L uczak~\cite{HKLodd} that for $p\geq Cn^{-(2t-1)/2t}$ we have that $\frac 12 e(G(n,p))\leq ex(G(n,p),C_{2t+1})\leq \left(\frac 12+\varepsilon\right)e(G(n,p))$.
 For fixed even cycles, the same group of authors showed~\cite{HKLeven} that for $p=\omega(n^{-(2t-2)/(2t-1)})$ we have $ex(G(n,p),C_{2t})=o(e(G(n,p)))$  (for more precise bounds on the fixed even cycle case, see Kohayakawa, Kreuter, and Steger~\cite{KKSeven}, and Morris and Saxton~\cite{MSeven}).
 The authors of \cite{HKLodd,HKLeven, KLRK4}  conjectured that a similar behaviour should also hold for any fixed-size graph $H$, that is, that the value of $ex(G(n,p),H)$ should be asymptomatically equal to $\tfrac{ex(n,H)}{\binom n2} \cdot e(G(n,p))$, for suitable values of $p$.
 This conjecture was proved independently by Conlon and Gowers \cite{ConlonGowersTranseference} (with certain constraints on $H$) and by Schacht \cite{SchachtTransference}, who showed that the Tur\'an number of a fixed graph in $G(n,p)$ is of the same proportion of edges as it is in the complete graph, where the latter has been determined by Erd\H{o}s and Stone.
 More precisely, they proved that for $p\geq Cn^{-1/m_2(H)}$, and for a fixed graph $H$, \whp $ex(G(n,p),H)\leq (1-\frac 1{\chi(H)-1}+\varepsilon)e(G(n,p))$.
 A matching lower bound can be obtained by a random placement of the extremal example of $ex(n,H)$.   
 The phenomenon that we observe here is frequently called \textit{the transference principle}, which in this context can be interpreted as a random graph ``inheriting'' its (relative) extremal properties from the classical deterministic case, i.e., the complete graph.
 In their papers, Conlon and Gowers \cite{ConlonGowersTranseference} and Schacht \cite{SchachtTransference} discussed this principle and showed transference of several extremal results from the classical deterministic setting to the probabilistic setting.
 
In this paper we aim to study the transference principle in the context of long cycles.
The first step is to understand what should be the relevant regime of $p$.
It is easy to observe that if $p=o(\frac 1n)$ then a typical $G(n,p)$  is a forest, that is, does not contain any cycle.
Thus, when looking at the appearance of a cycle in $G(n,p)$, it is natural to restrict ourselves to the regime $p=\Omega (\frac 1n)$.
Furthermore, it is well known that cycles start to appear in $G(n,p)$ at probability $p=\Theta\left(\frac 1n\right)$.
We shall further recall what are the typical lengths of cycles one can expect to have in this regime. Note that for $p=\Theta\left(\frac 1n\right)$ \whp there are linearly many isolated vertices. Therefore, in this regime of $p$, we can hope to find in $G(n,p)$ cycles of length at most $(1-\varepsilon)n$ for some constant $\varepsilon>0$. Indeed, the typical appearance of nearly spanning cycles was shown in a series of papers by Ajtai, Koml\'os, and Szemer\'{e}di~\cite{AKSLongCycle}, de la Vega~\cite{DeLaVegaLongCycles}, Bollob\'{a}s\cite{BollobasLongCycle}, Bollob\'as, Fenner and Frieze~\cite{BFFLongCycles}.
In 1986 Frieze~\cite{FriezeLongCycles} proved that if $p\geq \frac Cn$ then \whp in $G(n,p)$ there exists a cycle of length at least $n-(1+\varepsilon)v^1(n,p)$, where $v^1(n,p)$ is the number of vertices of degree at most 1 and $\varepsilon\coloneqq \varepsilon(C)$ (and it was very recently improved even more by Anastos and Frieze~\cite{AnastosFrieze}).
In 1991, \L uczak showed~\cite{LuczakCycles} that for $p=\omega\left(\frac 1n\right)$, \whp $G(n,p)$ contains cycles of all lengths between $3$ and $n-(1+\varepsilon)v^1(n,p)$. 
On the other hand, when looking at cycles of length $o(\log n)$ in the context of Tur\'an-type problems, the regime $p=\Theta (\frac 1n)$ is not quite relevant.
It is easy to verify that for  $p=\Theta (\frac 1n)$ \whp one expects $o(e(G(n,p)))$ cycles of such lengths, and hence they can be destroyed by deleting a negligible proportion of edges.
Therefore, when requiring that the number of copies of $C_t$ will be \whp at least proportional to the number of edges, combining it with the fact that $p=\Omega\left(\frac 1n\right)$, we get that $t=\Omega(\log n)$.
 
 Moving back to the extremal problem, it was shown by  Dellamonica, Kohayakawa,  Marciniszyn, and  Steger~\cite{ResilienceLongCyclesRandomGraph} that if $p=\omega(\frac 1n)$, then for all $\alpha>0$, if $G'$ is a subgraph of $G(n,p)$ with $e(G')\geq \left(1-(1-w(\alpha))(\alpha+w(\alpha))+o(1)\right)e(G(n,p))$, then \whp $G'$ contains a cycle of length at least $(1-\alpha)n$, where $w(\alpha)=1-(1-\alpha)\lfloor(1-\alpha)^{-1} \rfloor $.
This result is asymptotically tight by the classical result of Woodall~\cite{Woodall} that guarantees a cycle of length at least $(1-\alpha)n$ in any graph $G$ with $e(G)\geq \left(1-(1-w(\alpha))(\alpha+w(\alpha))+o(1)\right)\binom n2$.

Very recently, Balogh, Dudek and Li \cite{BDL} studied the asymptotic behavior of $ex(G(n,p), P_{\ell})$ for various ranges of $\ell=\ell(n)$.

In this paper we study the appearance of long cycles of a \textbf{given length} in subgraphs of pseudo-random graphs.
As a direct consequence we get a result for $G(n,p)$.
More precisely, we determine the asymptotic value $ex((G(n,p)),C_t)$, where $p=\Omega(\frac 1n)$ and $t$ is between  $\Theta(\log n)$ and  $(1-\varepsilon)n$.

The more general statement deals with a class of graphs which is larger than the random graphs class.
For this we use the following definition.

\begin{definition}\label{def:UpperUni}
	Let $G$ be a graph on $n$ vertices.
	Suppose $0<\eta \leq 1$ and $0<p \leq 1$.
	We say that $G$ is \emph{$(p,\eta)$-upper-uniform} if for every $U,W \subseteq V(G)$ with $U\cap W = \emptyset$ and $|U|,|W| \geq \eta n$, we have $e_G(U,W) \leq (1+\eta)p|U||W|$.
\end{definition}

\begin{remark}\label{re:UpperUni}
	In a $(p,\eta)$-upper-uniform graph $G$ on $n$ vertices  we have, for any $U\subseteq V(G)$ with $|U|\geq 2\eta n$, that
	\[e(G[U]) \leq (1+\eta)p\binom{|U|}2. \]
\end{remark}

Indeed, let $U\subseteq V(G)$ of size $u \coloneqq |U|\geq 2\eta n$.
We look at all possible partitions of $U$ into two subsets $U_1, U_2$ such that $u_1\coloneqq |U_1| = \left\lfloor \tfrac u2 \right\rfloor$ and $u_2\coloneqq |U_2| = \left\lceil \tfrac u2 \right\rceil$, and we use it to count the number of edges in such cuts of $H$ in two ways.
We have
\begin{align*}
	e(U)\cdot 2\binom{u-2}{u_1-1} = \sum_{U_1, U_2} e(G[U_1, U_2]).
\end{align*}
By $(p,\eta)$-upper-uniformity of $G$ we have $e(G[U_1, U_2]) \leq (1+\eta)p u_1 u_2$, so we get
\begin{align*}
	e(U) &\leq \frac12 \binom{u-2}{u_1-1}^{-1} \binom{u}{u_1} (1+\eta)p u_1 u_2 = (1+\eta)p \binom u2.
\end{align*}

The following notation is based on results by Erd\H{o}s-Gallai~\cite{ErdosGallai} and Woodall~\cite{Woodall} (see \Cref{thm:ErdosGallai} and \Cref{thm:woodall} for more information).

\begin{definition}\label{def:gtn}
	 The functions $g_o, g_e$ are given as follows.
	 
	 If $t$ is odd, then
	 \begin{align*}
	 g_o\left( t, n \right)\cdot \binom n2 \coloneqq ex(n,C_t)+1= 
	 \begin{cases}
	 \binom{t-1}{2}+\binom{n-t+2}2+1, & \text{if } t\geq \frac 12(n+3),\\
	 \left\lfloor \frac 14n^2 \right\rfloor+1, & \text{if } t< \frac 12(n+3).
	 \end{cases}
	 \end{align*}
	 If $t$ is even and $\gamma>0$ is a parameter,
	 \begin{align*}
	 g_e^\gamma\left(t , n \right)\cdot \binom n2 \coloneqq 
	 \begin{cases}
	 ex(n,C_t)+1=
	 \binom{t-1}{2}+\binom{n-t+2}2+1, & \text{if } t\geq \frac 12(n+3),\\
	 ex(n,P_t)+1=\left\lfloor \frac 12 n (t-1)\right\rfloor+1, & \text{if } \gamma n\leq t< \frac 12(n+3),\\
	 0, & \text{if } t<  \gamma n,
	 \end{cases}
	 \end{align*}
	 Furthermore, the function $g^\gamma : [0,1] \rightarrow [0,1]$ is defined as follows.
	 \begin{align*}
	 g^\gamma(t, n) = \begin{cases}
	 g_o(t, n), & \text{if } t \text{ is odd}\\
	 g_e^\gamma(t, n), & \text{if } t \text{ is even}.\\
	 \end{cases}
	 \end{align*}
\end{definition}
Later we will set a specific value of the parameter $\gamma$ (see \Cref{parameters}).

We are now ready to state our main theorem. Here and later, $\log n$ refers to the natural logarithm.

\begin{theorem}\label{thm:main}
		 For every $0< \beta<\frac 14$, there exist $\eta, n_0,\gamma> 0$ such that for every $n\geq n_0$, if $G$ is a $(p,\eta)$-upper-uniform graph on $n$ vertices with $e(G) \geq (1-\beta/2)p\binom n2$ for some $0<p\coloneqq p(n)\leq1$, then for any $\frac {C_1}{\log(1/\beta)}\cdot \log n \leq t \leq (1-C_2\beta)n$, where $C_1,C_2>0$ are absolute constants, if $G'$ is a subgraph of $G$ with 
		\begin{align*}
			 e(G')\geq \left(g^\gamma(t, n) + \beta\right) e(G)
		\end{align*}
		edges, then $G'$ contains a cycle of length $t$.
\end{theorem}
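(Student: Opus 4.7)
The goal is a sparse transference of Woodall's and Erdős–Gallai's extremal results to the upper-uniform setting, matching the piecewise definition of $g^\gamma$. I would begin with a clean-up step. Using Remark~1.2, every vertex set $U \subseteq V(G)$ with $|U| \geq 2\eta n$ spans at most $(1+\eta)p\binom{|U|}{2}$ edges. Iteratively removing from $G'$ any vertex of degree less than $c\beta p n$ (for a suitable absolute constant $c$) deletes at most $c\beta p n \cdot n$ edges, which is much smaller than the slack $\beta e(G) \asymp \beta p n^2$. This yields a subgraph $H \subseteq G'$ with $e(H) \geq (g^\gamma(t,n) + \beta/2) e(G)$ and $\delta(H) \geq c\beta p n$, so that degrees are comparable to what one would expect in a typical $G(n,p)$.

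Next I would split into cases according to the parity of $t$ and the regime of $t$ appearing in the definition of $g^\gamma$, and in each case combine a long-path construction with a ``many boosters'' statement, which is exactly the role I expect the Key Lemma advertised in the abstract to play. Concretely: in the upper-uniform host, a subgraph of density $\Omega(\beta p)$ with min-degree $\Omega(\beta p n)$ contains a path $P$ together with sets $S_1, S_2$ of potential endpoints of size $\Omega(\beta p n)$ obtained via Pósa rotation-extension, so that every vertex of $S_i$ serves as an endpoint of some path on $V(P)$ using only edges of $H$.
\textbf{Regime (i):} $t \geq (n+3)/2$. Here the extremal configuration is two cliques sharing a vertex, with $g^\gamma(t,n)\binom{n}{2} = \binom{t-1}{2} + \binom{n-t+2}{2} + 1$; upper-uniformity forbids this concentration up to a factor $(1+\eta)$, and the $\beta e(G)$ slack allows a standard counting/rotation argument to close a path of length $\geq t$ into a cycle of length exactly $t$.
\textbf{Regime (ii):} $\gamma n \leq t < (n+3)/2$ with $t$ even, treated via Erdős–Gallai: $g_e^\gamma\binom{n}{2} = \lfloor n(t-1)/2\rfloor + 1 = ex(n,P_t)+1$, so $H$ contains a path of length $\geq t$; the boosters then let us trim the path to length $t-1$ and close it.
\textbf{Regime (iii):} $t$ odd with $t < (n+3)/2$, where $g_o \cdot \binom{n}{2} = \lfloor n^2/4\rfloor + 1$. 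The extremal barrier is bipartiteness; the excess $\beta e(G)$ forces an odd short cycle somewhere, which is combined with a long even path obtained from the rotation process to produce a cycle of exactly the odd length $t$.
\textbf{Regime (iv):} $t$ even and $t < \gamma n$ (so $g_e^\gamma = 0$): here one has to find an even cycle of logarithmic prescribed length from density alone, using that in a $(p,\eta)$-upper-uniform graph with positive density, BFS layers have predictable sizes, and pairs of paths from a common root of the right total length collide.

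\textbf{Main obstacle.} The principal difficulty is producing a cycle of length \emph{exactly} $t$, with the correct parity, rather than ``at least $t$''. This requires that the booster set produced alongside the long path be large enough (and have both parities represented in the distance-along-$P$ labelling) to finely adjust the cycle length; arranging this uniformly over the full range $C_1\log n/\log(1/\beta) \leq t \leq (1-C_2\beta)n$ is where the threshold $t \gtrsim \log n / \log(1/\beta)$ enters --- for shorter $t$ the expected multiplicity of copies drops below the $\beta e(G)$ slack, and the transference breaks. Matching Woodall's bound up to the $+\beta$ error term, simultaneously in all four regimes, demands that the Key Lemma be flexible enough to yield, from any dense subgraph of an upper-uniform host, a path-with-boosters structure whose parameters scale correctly with $\beta$, $p$, and $t$.
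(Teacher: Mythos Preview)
Your proposal diverges fundamentally from the paper's argument and, as written, has a real gap at the crucial step.

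\textbf{What the paper actually does.} The paper does not work directly in $G'$ via minimum-degree cleanup and P\'osa rotation. Instead it applies the \emph{Sparse Regularity Lemma} to $G'$, obtaining an $(\varepsilon,p)$-regular partition $\Pi=(V_1,\dots,V_k)$; it then forms the reduced graph $R$ (edges are regular, dense pairs) and shows $e(R)\ge(g^\gamma(t,n)+\tau)\binom{k}{2}$. The Key Lemma is \emph{not} a rotation--booster statement: it says that if the auxiliary $\varepsilon$-graph $S\supseteq R$ on the $k$ clusters contains an odd path (respectively an odd cycle) of length $b$, then $G'$ contains cycles of \emph{all even} (respectively \emph{all odd}) lengths in an interval up to roughly $(b/k)(1-\delta)n$. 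The case split by parity and size of $t$ then happens at the level of the reduced graph, where one invokes Woodall and Erd\H os--Gallai on $R$ to find the required path or cycle; exact-length control in $G'$ comes from embedding, inside each regular pair, trees $T^{(r,h)}_\ell$ (two $r$-ary trees of depth $h$ joined by a path of adjustable length $\ell$), using Friedman--Pippenger and a DFS long-path argument, and then gluing leaf sets between consecutive clusters via the $\varepsilon$-property.

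\textbf{The gap in your plan.} Your outline never explains how rotation--extension yields a cycle of \emph{exactly} length $t$, and this is precisely the difficulty you correctly flag as the ``main obstacle''. Rotation gives a long path together with a linear-in-$pn$ set of alternative endpoints, but the cycle lengths one can close to are the positions of those endpoints along $P$, over which you have no a~priori arithmetic control; you would need that these positions hit \emph{every} integer in a long interval, with the correct parity, and nothing in the sketch produces that. The regime-specific remarks compound this: in regime~(iii) ``combine a short odd cycle with a long even path'' is not an operation that yields an odd cycle of a prescribed length without further structure (the paper sidesteps this entirely by finding an odd cycle in $R$ and feeding it to the Key Lemma); in regime~(iv) the BFS-layer heuristic does not obviously give even cycles of every length down to $\Theta(\log n/\log(1/\beta))$. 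In short, the paper's tree-embedding machinery is there precisely to supply the tunable parameter $\ell$ that makes exact-length cycles fall out; a direct rotation approach would need a replacement for this, and your proposal does not supply one.
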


Since we have $ex(n,P_t)=ex(n,P_{t-1})+O(n)$, then $ex(n,P_t)-O(n)\leq ex(n,P_{t-1})\leq ex(n,C_t)$, and
we can deduce from our main result the following corollary. 

\begin{corollary}\label{cor:main}
		For every $0< \beta<\frac 15$, there exist $\eta, n_0 > 0$ such that for every $n\geq n_0$, if $G$ is a $(p,\eta)$-upper-uniform graph on $n$ vertices with $e(G) \geq (1-\beta/2)p\binom n2$ for some $0<p\coloneqq p(n)\leq1$,  then for any $\frac {C_1}{\log(1/\beta)}\cdot \log n \leq t \leq (1-C_2\beta )n$, 
		\begin{align*}
		ex(G, C_t) \leq \left(\frac {ex(n,C_t)}{\binom n2} + \beta\right) e(G),
		\end{align*}
		where  $C_1,C_2>0$ are some absolute constants.
\end{corollary}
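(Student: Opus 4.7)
The plan is to derive the corollary from \Cref{thm:main} by a short case analysis that absorbs the small discrepancy between $g^\gamma(t,n)$ and $ex(n,C_t)/\binom{n}{2}$ into a slight shrinkage of the parameter $\beta$. Concretely, I would apply \Cref{thm:main} with parameter $\beta' \coloneqq \beta/2$, which is permissible since the corollary's hypothesis $\beta < 1/5$ ensures $\beta' < 1/4$; the constants $C_1, C_2$ of the corollary are then obtained from those of the theorem by a trivial rescaling (halving the upper-endpoint constant, keeping the lower-endpoint constant unchanged, and using $\log(1/\beta') \geq \log(1/\beta)$). The contrapositive of \Cref{thm:main} then gives $ex(G,C_t) < \bigl(g^\gamma(t,n) + \beta/2\bigr) e(G)$, reducing the corollary to the pointwise inequality
\[
g^\gamma(t,n) \leq \frac{ex(n,C_t)}{\binom n 2} + \frac{\beta}{2}.
\]

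This inequality splits into three subcases following \Cref{def:gtn}. When $t$ is odd, or when $t$ is even with $t \geq (n+3)/2$, we have $g^\gamma(t,n) = (ex(n,C_t)+1)/\binom{n}{2}$, and the excess $1/\binom{n}{2}$ is dominated by $\beta/2$ as soon as $n \geq n_0(\beta)$. When $t$ is even with $\gamma n \leq t < (n+3)/2$, we have $g^\gamma(t,n) = (ex(n,P_t)+1)/\binom{n}{2}$, and the estimate highlighted in the paragraph just before the corollary, namely $ex(n,P_t) \leq ex(n,P_{t-1}) + O(n) \leq ex(n,C_t) + O(n)$, shows that the excess is $O(1/n)$ and again fits inside $\beta/2$. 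When $t$ is even with $t < \gamma n$, we have $g^\gamma(t,n) = 0$ and the inequality is immediate.

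Because $g^\gamma$ was engineered precisely to mirror $ex(n,C_t)/\binom{n}{2}$ up to negligible additive error in every regime in which the latter is non-trivial, this deduction is essentially pure bookkeeping on constants. The heavy lifting is done entirely by \Cref{thm:main}, and the only auxiliary input needed is the elementary bound $ex(n,P_t) \leq ex(n,C_t) + O(n)$, which is already justified in the excerpt. I therefore do not expect any genuine obstacle at this step; the main conceptual point is simply that the parity-dependent function $g^\gamma$ introduced for the theorem and the parity-independent quantity $ex(n,C_t)/\binom{n}{2}$ appearing in the corollary agree up to an error that the $\beta/2$ slack readily absorbs.
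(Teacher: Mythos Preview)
Your approach is exactly what the paper does in its one-sentence justification preceding the corollary: reduce to \Cref{thm:main} and compare $g^\gamma(t,n)$ with $ex(n,C_t)/\binom{n}{2}$ case by case, the only nontrivial case being even $t<\tfrac12(n+3)$, where the bound $ex(n,P_t)\le ex(n,C_t)+O(n)$ supplies the needed inequality.

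One small bookkeeping slip to watch: invoking \Cref{thm:main} with $\beta'=\beta/2$ tightens the edge-count hypothesis to $e(G)\ge(1-\beta'/2)p\binom{n}{2}=(1-\beta/4)p\binom{n}{2}$, which is strictly stronger than the corollary's assumption $e(G)\ge(1-\beta/2)p\binom{n}{2}$, so as written the theorem does not apply. The cleaner route is to apply \Cref{thm:main} with the same $\beta$ (permissible since $\beta<\tfrac15<\tfrac14$, and then the edge hypothesis matches verbatim), obtaining $ex(G,C_t)<\bigl(g^\gamma(t,n)+\beta\bigr)e(G)$, and then use your pointwise estimate $g^\gamma(t,n)\le ex(n,C_t)/\binom{n}{2}+O(1/n)$. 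The paper absorbs this residual $O(1/n)$ informally; strictly speaking, making the inequality exact would require a minor adjustment of constants (this is presumably why the corollary restricts to $\beta<\tfrac15$ rather than $\tfrac14$), but neither you nor the paper spell that out, and it is genuinely routine.
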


\begin{remark}\label{re:main}
	In both \Cref{thm:main} and \Cref{cor:main} we obtain, in fact, given $t$, all cycles of length $q$, where $\frac {C_1}{\log(1/\beta)}\cdot \log n \leq q\leq t$, with the same parity as $t$.
\end{remark}

\Cref{thm:main} and \Cref{cor:main} are asymptotically optimal in a stronger form; a matching lower bound is true for any graph $G$ on $n$ vertices, not only for upper-uniform graphs.
That is, for any graph $G$ on the vertex set $[n]$  there exists a subgraph $G_0$ with $\frac{ex(n,C_t)}{\binom n2}\cdot e(G)$ edges containing no cycle of length $t$.
Indeed, let $W_t$ be a graph on $n$ vertices with $ex(n,C_t)$ edges containing no cycle of length $t$.
By averaging, there exists an assignment $\sigma$ of the vertices of $W_t$ into $[n]$ such that when intersecting with $G$, we have $e(G\cap W^\sigma_t)\geq\frac {ex(n,C_t)}{\binom n2}\cdot e(G)$.
Clearly, the resulting graph $G\cap W^\sigma_t$ contains no cycles of length $t$.
This gives the following.
\begin{fact}
	For every graph $G$ on $n\geq 3$ vertices and every integer $t\in [3,n]$ we have
	\begin{align*}
	ex(G, C_t)\geq \frac{ex(n,C_t)}{\tbinom n2}e(G).
	\end{align*}	
\end{fact}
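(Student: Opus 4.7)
The plan is a straightforward first-moment (random embedding) argument; in fact the paragraph immediately preceding the Fact already hints at it. My approach is as follows.

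First I would let $W_t$ be a fixed graph on the vertex set $[n]$ attaining the extremal count, i.e.\ with exactly $ex(n,C_t)$ edges and no copy of $C_t$; such a $W_t$ exists by the definition of $ex(n,C_t)$. Viewing $G$ also as a graph on $[n]$, pick a uniformly random permutation $\sigma : [n] \to [n]$ and let $W_t^\sigma$ be the pullback of $W_t$ along $\sigma$, that is, the graph on $[n]$ with edge set $\{\{u,v\} : \{\sigma(u),\sigma(v)\} \in E(W_t)\}$. By construction the spanning subgraph $G \cap W_t^\sigma$ of $G$ is isomorphic, via $\sigma$, to a subgraph of $W_t$, and hence is $C_t$-free.

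Next I would estimate the expected number of its edges. For each edge $\{u,v\} \in E(G)$ the pair $\{\sigma(u),\sigma(v)\}$ is uniformly distributed over the unordered pairs of distinct elements of $[n]$, so
$$\Pr\bigl[\{\sigma(u),\sigma(v)\} \in E(W_t)\bigr] = \frac{e(W_t)}{\binom{n}{2}} = \frac{ex(n,C_t)}{\binom{n}{2}}.$$
By linearity of expectation,
$$\mathbb{E}\bigl[e(G \cap W_t^\sigma)\bigr] = \frac{ex(n,C_t)}{\binom{n}{2}} \cdot e(G).$$
Therefore there exists at least one permutation $\sigma^\ast$ for which $e(G \cap W_t^{\sigma^\ast}) \geq \frac{ex(n,C_t)}{\binom{n}{2}} e(G)$, and the corresponding $G \cap W_t^{\sigma^\ast}$ is a $C_t$-free subgraph of $G$ witnessing the claimed lower bound on $ex(G,C_t)$.

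There is no real obstacle here; this is the classical ``intersect $G$ with a random isomorphic copy of the extremal example'' maneuver, and it works for any forbidden subgraph, not just $C_t$. The only small point worth stating explicitly is the $C_t$-freeness of the constructed subgraph: any cycle of length $t$ in $G \cap W_t^{\sigma^\ast}$ would be mapped by $\sigma^\ast$ to a cycle of length $t$ in $W_t$ (since $\sigma^\ast$ is a bijection, hence a graph isomorphism onto its image), contradicting the extremality of $W_t$.
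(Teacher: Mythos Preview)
Your proof is correct and is exactly the averaging argument the paper sketches in the paragraph immediately preceding the Fact: take an extremal $C_t$-free graph $W_t$, randomly permute its vertex labels, and average $e(G\cap W_t^\sigma)$ to find a good $\sigma$. You have simply written out the details of that sketch.
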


 \begin{remark}\label{re:pValue}
	\Cref{thm:main} does not assume anything on the value of $p$.
	However, graphs $G$ satisfying the conditions of the statement exist only for a restricted spectrum of values for $p$.
	More specifically, for $p = o(\tfrac1n)$ there are no $(p,\eta)$-upper-uniform graphs $G$ with $e(G) \geq (1-\beta/2)p\binom n2$, which makes the statement relevant only for $p\geq \tfrac Cn$ where $C>0$ is some constant.
	To see this, take $G$ to be a $(p,\eta)$-upper-uniform graph with $p=o(\frac 1n)$. Then by \Cref{re:UpperUni} $e(G)\leq (1+\eta)p\binom n2=o(n)$. Thus, there is a subset $I$ of isolated vertices in $G$ of size $\frac n2$. By the assumption $e(G)\geq (1-\beta)p\binom n2$ we obtain that $e(G[V\setminus I])\geq (1-\beta)p\binom n2$. This contradicts the upper uniformity of $G$ since $e(G[V\setminus I])\leq (1+\eta)p\binom {n/2}2$.
\end{remark}

Probably the most natural application of \Cref{thm:main} is for the random graph case. 
It is not hard to see that for $p=\Omega_\eta(1/n)$, $G(n,p)$  \whp satisfies the conditions of \cref{thm:main}.
Indeed, as the total number of edges in $G(n,p)$ is distributed binomially with parameters $\binom n2$ and $p$, we have that $e(G(n,p))\geq (1-\beta/2)p\binom n2$ with probability $1-e^{-\Omega(n)}$.
In addition, for, say, $p\geq \frac {\log 4}{\eta^4n}$ we have that for every two disjoint subsets $U_1,U_2$ such that $|U_1|,|U_2|\geq \eta n$, $e(U_1,U_2)\leq (1+\eta)p|U_1||U_2|$ with probability $1-e^{-\Omega(n)}$.
We obtain that, for $p \geq \tfrac Cn$ and $C\coloneqq C(\eta)$ being large enough, the random graph $G(n,p)$ is \whp $(p,\eta)$-upper-uniform. By the discussion regarding the expected cycle lengths in $G(n,p)$, we easily get that the lower bound on $t$ in \Cref{thm:main} is, in fact, necessary.

As a result, we obtain the following corollary.
\begin{corollary}\label{cor:Gnp}
	For every $0<\beta <\frac 14$, there exist $C,\gamma > 0$ such that if $G= G(n,p)$ where $p\geq \frac Cn$, then for any $\frac {C_1}{\log(1/\beta)}\cdot \log n \leq t \leq (1-C_2\beta )n$, with probability $1-e^{-\Omega(n)}$,
	where $C_1,C_2>0$ are absolute constants, if $G'$ is a subgraph of $G$ with  
	\begin{align*}
	e(G') \geq \left(g^\gamma(t, n) + \beta\right) e(G),
	\end{align*}
	then $G'$ contains a cycle of length $t$.
\end{corollary}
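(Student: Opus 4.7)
The plan is to derive Corollary 1.5 as a direct instantiation of Theorem 1.2, by verifying that $G = G(n,p)$ satisfies both of its structural hypotheses with failure probability $e^{-\Omega(n)}$: that $G$ is $(p,\eta)$-upper-uniform, and that $e(G) \geq (1-\beta/2)p\binom{n}{2}$. I would first fix the $\eta = \eta(\beta)$ and $\gamma = \gamma(\beta)$ furnished by Theorem 1.2, and then choose $C = C(\eta)$ large enough that the hypothesis $p \geq C/n$ dominates the concentration thresholds needed below.

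For the edge-count hypothesis, I would use that $e(G(n,p)) \sim \Bin(\binom{n}{2}, p)$ has mean $p\binom{n}{2} = \Omega(n)$, so a standard Chernoff estimate yields
\[\Pr\!\left[e(G(n,p)) < (1-\beta/2)p\binom{n}{2}\right] \leq e^{-\Omega(\beta^2 pn^2)} = e^{-\Omega(n)}\]
as soon as $p \geq C/n$.

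For upper-uniformity I would fix disjoint $U, W \subseteq [n]$ with $|U|, |W| \geq \eta n$ and apply Chernoff to $e_G(U,W) \sim \Bin(|U||W|, p)$, obtaining $\Pr[e_G(U,W) > (1+\eta)p|U||W|] \leq e^{-c\eta^3 pn^2}$ for some absolute $c > 0$. Since there are at most $4^n$ such disjoint pairs of subsets, a union bound caps the total failure probability by $4^n \cdot e^{-c\eta^3 p n^2}$, which collapses to $e^{-\Omega(n)}$ once $p \geq C/n$ with $C$ large enough in terms of $\eta$ (concretely $C \gtrsim 1/(c\eta^3)$). Balancing this Chernoff exponent against the $4^n$ enumeration is the one genuine tradeoff in the argument and is what ultimately forces the $\beta$-dependence of $C$; everything else is routine.

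A final union bound combines the two good events, after which Theorem 1.2 applies verbatim to $G = G(n,p)$ and to every subgraph $G'$ with $e(G') \geq (g^\gamma(t,n)+\beta)e(G)$, producing a copy of $C_t$ for each $t \in [\tfrac{C_1}{\log(1/\beta)}\log n,\, (1-C_2\beta)n]$, which is precisely the statement of the corollary.
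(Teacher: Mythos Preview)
Your proposal is correct and follows essentially the same approach as the paper: fix $\eta,\gamma$ from Theorem~1.2, verify via Chernoff that $e(G(n,p))\geq(1-\beta/2)p\binom n2$ with probability $1-e^{-\Omega(n)}$, verify $(p,\eta)$-upper-uniformity by Chernoff plus a union bound over the at most $4^n$ pairs of disjoint subsets (the paper gives the explicit threshold $p\geq\frac{\log 4}{\eta^4 n}$), and then invoke Theorem~1.2. There is nothing to add.
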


Similarly to \Cref{cor:main}, we can write the upper bound on $ex(G(n,p),C_t)$ only in terms of $ex(n,C_t)$, as follows.
\begin{corollary}
	For every $0<\beta <\frac 15$, there exists $C > 0$ such that if $G= G(n,p)$ where $p\geq \frac Cn$, then for any $\frac {C_1}{\log(1/\beta)}\cdot \log n \leq t \leq (1-C_2\beta )n$, with probability $1-e^{-\Omega(n)}$,
	\begin{align*}
	ex(G(n,p), C_t) \leq \left(\frac {ex(n,C_t)}{\binom n2} + \beta\right) e(G(n,p)),
	\end{align*}
	where $C_1,C_2>0$ are absolute constants.
\end{corollary}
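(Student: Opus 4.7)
The plan is to deduce this corollary directly from \Cref{cor:main} by verifying that $G(n,p)$ with $p\geq C/n$ and $C=C(\eta)$ sufficiently large satisfies, with probability $1-e^{-\Omega(n)}$, the two structural hypotheses: (i) $e(G)\geq (1-\beta/2)p\binom{n}{2}$, and (ii) $G$ is $(p,\eta)$-upper-uniform for the $\eta>0$ supplied by \Cref{cor:main} for our given $\beta$. Once these are established, applying \Cref{cor:main} to $G=G(n,p)$ yields the stated upper bound on $ex(G(n,p),C_t)$ on the same good event.

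First I would handle (i). The number of edges $e(G(n,p))\sim \mathrm{Bin}\bigl(\binom{n}{2},p\bigr)$ has expectation $p\binom{n}{2}=\Omega(n)$ under $p\geq C/n$, so a standard multiplicative Chernoff bound gives
\[
\Pr\!\left[e(G(n,p)) < (1-\beta/2)p\tbinom{n}{2}\right]\leq \exp\!\left(-\Omega_\beta\bigl(p n^2\bigr)\right)\leq e^{-\Omega(n)},
\]
for $C=C(\beta)$ large enough.

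Next I would verify (ii). Fix disjoint $U,W\subseteq[n]$ with $|U|,|W|\geq \eta n$. Then $e_{G(n,p)}(U,W)\sim \mathrm{Bin}(|U||W|,p)$ with mean $p|U||W|\geq p\eta^2 n^2$, so Chernoff's inequality yields
\[
\Pr\!\left[e_{G(n,p)}(U,W)>(1+\eta)p|U||W|\right]\leq \exp\!\left(-\Omega(\eta^2\cdot p\eta^2 n^2)\right)= \exp\!\left(-\Omega_\eta(pn^2)\right).
\]
The number of such ordered pairs $(U,W)$ is at most $4^n$, so a union bound shows that for $p\geq C/n$ with $C=C(\eta)$ large enough, the upper-uniformity condition holds for every admissible pair simultaneously with probability $1-e^{-\Omega(n)}$.

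On the intersection of these two events, which still has probability $1-e^{-\Omega(n)}$, the graph $G(n,p)$ satisfies all the hypotheses of \Cref{cor:main}. Invoking that corollary with the same $\beta$, and noting that the range $\tfrac{C_1}{\log(1/\beta)}\log n\leq t\leq (1-C_2\beta)n$ is exactly the one supplied, we conclude that
\[
ex(G(n,p),C_t)\leq \left(\frac{ex(n,C_t)}{\binom{n}{2}}+\beta\right)e(G(n,p)),
\]
which is the desired inequality. There is no substantial obstacle here: the only delicate point is ensuring that $C$ is chosen large enough as a function of $\eta$ (hence of $\beta$) so that the Chernoff-plus-union-bound argument beats the $4^n$ counting factor, and this is standard.
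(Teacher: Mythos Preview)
Your proof is correct and follows essentially the same approach as the paper: the paper verifies (in the discussion preceding \Cref{cor:Gnp}) that $G(n,p)$ with $p\geq C(\eta)/n$ satisfies $e(G)\geq(1-\beta/2)p\binom{n}{2}$ and is $(p,\eta)$-upper-uniform with probability $1-e^{-\Omega(n)}$ via the same Chernoff-plus-union-bound argument, and then obtains this corollary from \Cref{cor:Gnp} by the same $ex(n,P_t)$ versus $ex(n,C_t)$ comparison used to derive \Cref{cor:main} from \Cref{thm:main}. You simply compose these steps in the other order---first passing to \Cref{cor:main}, then specializing to $G(n,p)$---which is equally valid.
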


Thus, also here, we observe a manifestation of the \emph{transference principle}, that is, the random graph $G(n,p)$ preserves the relative behavior of the Tur\'an number of long cycles observed in the classical case, i.e., in the complete graph $K_n$.

As mentioned in \Cref{re:main}, given $t$, the statement holds for every $\frac {C_1}{\log(1/\beta)}\cdot \log n \leq q\leq t$ with the same parity as $t$.\\

Another natural application of the main theorem is for $(n,d,\lambda)$-graphs, which can be shown to be $(p,\eta)$-upper-uniform for suitable values of $d, \lambda$.

\begin{definition}
	A graph $G$ is an \emph{$(n, d, \lambda)$-graph} if $G$ has $n$ vertices, is $d$-regular, and the second largest
	(in absolute value) eigenvalue of its adjacency matrix is bounded from above by $\lambda$. 
\end{definition}

$(n,d,\lambda)$-graphs have been studied extensively, mainly due to their good pseudo-random properties. For a detailed background see~\cite{KrivelevichSudakov}. 
Recently, it was shown  in~\cite{cyclesInExGraphs2} that for a given $\beta > 0$, if $\tfrac d\lambda \ge C(\beta)$, then $(n,d,\lambda)$-graphs contain cycles of all lengths between $\tfrac{C_1}{\log(1/\beta)}\cdot \log n$ and $(1 - C_2\beta)n$ (for some absolute constants $C_1,C_2>0$), improving the result in~\cite{cyclesInExGraphs1}.   

Using the Expander Mixing Lemma due to Alon and Chung \cite{ExpanderMixingLemma}, we can show that for suitable values of $d$ and $\lambda$, an $(n,d,\lambda)$-graph is also upper-uniform.
Hence we obtain the following corollary.

\begin{corollary}\label{cor:ndlambda}
	For every $0 < \beta < \tfrac14$ there exist $n_0, \gamma, \eta > 0$ such that for every $n \geq n_0$ and for every $d, \lambda > 0$ satisfying $\tfrac d\lambda \geq \tfrac1\eta$, 
	if $G$ is an $(n,d,\lambda)$-graph, then for any $\tfrac{C_1}{\log(1/\beta)}\cdot \log n \leq t \leq (1 - C_2\beta)$, where $C_1, C_2 > 0$ are absolute constants, if $G'$ is a subgraph of $G$ with
	\[e(G') \geq (g^\gamma(t, n) + \beta)e(G), \]
	edges, then $G'$ contains a cycle of length $t$.
\end{corollary}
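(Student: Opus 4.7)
The plan is to reduce this to a direct application of \Cref{thm:main} via the Expander Mixing Lemma. Given an $(n,d,\lambda)$-graph $G$, I would set $p \coloneqq d/n$. Since $G$ is $d$-regular, $e(G) = dn/2$, while $(1-\beta/2)p\binom{n}{2} = (1-\beta/2)d(n-1)/2 \le dn/2$, so the density hypothesis of \Cref{thm:main} holds automatically; no control of $\beta$ is needed for this part.

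The core of the argument is to translate the spectral hypothesis $d/\lambda \ge 1/\eta$ into $(p,\eta_0)$-upper-uniformity for the $\eta_0$ produced by \Cref{thm:main}. First I would invoke \Cref{thm:main} with the given $\beta$ to extract constants $\eta_0 = \eta_0(\beta)$, $n_1 = n_1(\beta)$, $\gamma = \gamma(\beta)$, and then \emph{define} the $\eta$, $n_0$, and $\gamma$ of the corollary by $\eta \coloneqq \eta_0^2$, $n_0 \coloneqq n_1$, $\gamma \coloneqq \gamma(\beta)$. The Expander Mixing Lemma of Alon--Chung gives, for disjoint $U, W \subseteq V(G)$,
\[
\left| e(U,W) - \frac{d}{n}|U||W| \right| \le \lambda \sqrt{|U||W|}.
\]
If $|U|, |W| \ge \eta_0 n$, then $\sqrt{|U||W|} \ge \eta_0 n$, so
\[
e(U,W) \le p|U||W| + \lambda \sqrt{|U||W|} = p|U||W|\left(1 + \frac{\lambda n}{d \sqrt{|U||W|}}\right) \le p|U||W|\left(1 + \frac{\lambda}{d\eta_0}\right).
\]
The hypothesis $d/\lambda \ge 1/\eta = 1/\eta_0^2$ gives $\lambda/(d\eta_0) \le \eta_0$, so $e(U,W) \le (1+\eta_0)p|U||W|$, which is exactly $(p,\eta_0)$-upper-uniformity. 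This is also what pins down the quadratic relationship between $\eta$ and the upper-uniformity parameter.

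With both hypotheses of \Cref{thm:main} verified for the parameters $p = d/n$ and $\eta_0$, I would apply \Cref{thm:main} to $G$ to conclude that any subgraph $G'$ with $e(G') \ge (g^\gamma(t,n) + \beta)e(G)$ contains a cycle of length $t$ for every admissible $t$. I do not expect a real obstacle here: the proof is a short bookkeeping exercise matching the constants coming from the Expander Mixing Lemma to those demanded by \Cref{thm:main}. The mildest subtlety is the quadratic loss from $\lambda/d$ to upper-uniformity, which is why the hypothesis reads $d/\lambda \ge 1/\eta$ with $\eta$ chosen strictly smaller than $\eta_0$ rather than matching the upper-uniformity parameter itself.
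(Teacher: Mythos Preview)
Your proposal is correct and follows essentially the same route as the paper: set $p=d/n$, use the Expander Mixing Lemma to verify $(p,\eta_0)$-upper-uniformity, check the edge-count hypothesis from $d$-regularity, and invoke \Cref{thm:main}. In fact your bookkeeping is slightly more careful than the paper's: the paper takes the $\eta$ of the corollary equal to the upper-uniformity parameter from \Cref{thm:main} and asserts $(p,\eta)$-upper-uniformity directly, whereas the Expander Mixing Lemma bound $\lambda\sqrt{|A||B|}\le \eta_0\,p|A||B|$ for $|A|,|B|\ge \eta_0 n$ actually requires $\lambda/d\le \eta_0^2$, exactly the quadratic loss you identify by setting $\eta=\eta_0^2$.
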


Note that the lower bound on $t$ is tight due to the existence of $(n,d,\lambda)$-graphs with large girth. 
More explicitly, it was shown in \cite{ndlambdaGirthLub,ndlambdaGirthMar} 
that there exist infinitely many $(n, d, \lambda)$-graphs with girth $\Omega(\log n)$, such that $\tfrac d\lambda$ is larger than a given constant.
Details of the proof and further discussion on this application can be found in \Cref{sec:applications}.\\

Using very similar techniques, we can also obtain a \textit{robustness}-type result (for a detailed survey on robustness problems see~\cite{RobustnessSudakov}).
In this type of results, we consider a graph $G$ satisfying some \textit{extremal conditions} that guarantee a graph property $\mathcal P$ (in our case, containment of long cycles).
The aim is to measure quantitatively the strength of these specific conditions.
For this, we let $G$ be a graph satisfying these conditions, and let $G(p)$ be the random graph obtained by keeping each edge of $G$ independently with probability $p\in[0,1]$.
Note that if $G=K_n$ then $G(p)=G(n,p)$.
In the next theorem we show that if $G$ has (slightly more than) the minimum number of edges that guarantees a long cycle of a given length, then with high probability $G(p)$ also contains such a cycle for $p=\Omega\left(\frac 1n\right)$.
This value of $p$ is best possible due to threshold of the existence of cycles in $G(n,p)$.

\begin{theorem}\label{thm:mainRobustness}
	For every  $\beta > 0$ there exists $C > 0$ such that for  $\tfrac{C_1}{\log(1/\beta)}\log n \leq t \leq (1-C_2\beta)n$ (where  $C_1, C_2>0$ are absolute constants), for any $p\geq \tfrac Cn$, if $G$ is a graph on $n$ vertices satisfying
	\[ e(G) \geq ex(n, C_t) + \beta \binom n2, \]
	then \whp $G(p)$ contains a copy of $C_t$.
\end{theorem}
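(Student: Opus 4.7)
The plan is to reduce \Cref{thm:mainRobustness} to \Cref{thm:main} via a natural coupling. Set $\beta' \coloneqq \beta/6$, let $\eta = \eta(\beta')$ and $\gamma = \gamma(\beta')$ be the values supplied by \Cref{thm:main} for parameter $\beta'$, and choose $C = C(\beta, \eta)$ sufficiently large. Sample $H \sim G(n, p)$ on $V(G)$ and couple $G(p)$ with $H$ so that $G(p) = G \cap H$; in particular $G(p) \subseteq H$. The goal is to apply \Cref{thm:main} with host graph $H$ and subgraph $G(p)$, using parameter $\beta'$, which will produce a copy of $C_t$ in $G(p)$ with high probability. The admissible range of $t$ in the robustness theorem is a subset of the admissible range in \Cref{thm:main} with parameter $\beta'$, for appropriate choices of the absolute constants $C_1, C_2$.

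The pseudo-randomness and edge-count conditions for $H$ are standard: a Chernoff bound together with a union bound over disjoint pairs $U, W \subseteq [n]$ with $|U|, |W| \geq \eta n$ gives, for $C$ large enough, that with probability $1 - e^{-\Omega(n)}$ the graph $H$ is $(p, \eta)$-upper-uniform and satisfies $(1 - \beta/8)\, p \binom{n}{2} \leq e(H) \leq (1 + \beta/8)\, p \binom{n}{2}$. Independently, since $e(G) \geq ex(n, C_t) + \beta \binom{n}{2} \geq \beta \binom{n}{2}$, the random variable $e(G(p))$ is binomially distributed with mean $\Omega(\beta p n^2) = \omega(1)$, so another Chernoff bound yields $e(G(p)) \geq (1 - \beta/8)\, p\, e(G)$ with probability $1 - e^{-\Omega(n)}$. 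Intersecting these good events,
\begin{align*}
\frac{e(G(p))}{e(H)} \geq \frac{1 - \beta/8}{1 + \beta/8}\left(\frac{ex(n, C_t)}{\binom{n}{2}} + \beta\right) \geq \frac{ex(n, C_t)}{\binom{n}{2}} + \frac{\beta}{3}
\end{align*}
for $\beta$ sufficiently small.

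It thus remains to show $g^\gamma(t, n) \leq ex(n, C_t)/\binom{n}{2} + \beta/6$, for then the density hypothesis $e(G(p)) \geq (g^\gamma(t, n) + \beta')\, e(H)$ of \Cref{thm:main} is met. Inspecting \Cref{def:gtn}: for $t \geq (n+3)/2$ of either parity, and for odd $t$ with $t < (n+3)/2$, one has $g^\gamma(t, n) \binom{n}{2} = ex(n, C_t) + 1$ by the theorems of Woodall and Simonovits, so the bound is immediate; and for even $t < \gamma n$ we have $g_e^\gamma \equiv 0$, making the bound trivial. The only non-trivial range is even $t$ with $\gamma n \leq t < (n+3)/2$, where $g_e^\gamma(t, n) \binom{n}{2} = ex(n, P_t) + 1 = \lfloor (t-1)n/2 \rfloor + 1$. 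Since every copy of $C_t$ contains a copy of $P_{t-1}$, Erd\H{o}s--Gallai gives $ex(n, C_t) \geq ex(n, P_{t-1}) = \lfloor (t-2)n/2 \rfloor$, so the gap $g_e^\gamma(t, n) - ex(n, C_t)/\binom{n}{2}$ is at most $(n/2 + O(1))/\binom{n}{2} = O(1/n)$, which is less than $\beta/6$ for $n$ large. \Cref{thm:main} then delivers $C_t \subseteq G(p)$. The only non-routine point in the argument is this even middle-range comparison, where the main theorem's density threshold is governed by $ex(n, P_t)$ rather than $ex(n, C_t)$; everything else is coupling plus standard concentration.
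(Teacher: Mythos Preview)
Your argument is correct and takes a genuinely different route from the paper's. You treat \Cref{thm:main} as a black box: couple $G(p)=G\cap H$ with $H\sim G(n,p)$, verify that $H$ is \whp\ $(p,\eta)$-upper-uniform with nearly the expected number of edges, and then check that $G(p)$ is a dense enough subgraph of $H$ to trigger \Cref{thm:main}. The paper instead opens up the machinery: it applies the \emph{dense} Regularity Lemma directly to $G$, shows (\Cref{cl:EpsPptyRobust}) that each $\varepsilon$-regular pair of $G$ with density at least $\rho$ \whp\ satisfies the $\varepsilon$-property in $G(p)$, so that the reduced graph $R(G,\Pi,\rho,\varepsilon)$ embeds into the $\varepsilon$-graph $S(G(p),\Pi,\varepsilon)$, and then invokes \Cref{cor:key}. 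Your approach is cleaner and more modular for this particular theorem; the paper's approach has the advantage that the partition $\Pi$ and the reduced graph $R$ are taken from $G$ itself rather than from a random host, which is exactly the setup reused in the sharper \Cref{cl:TightOdd}, where one needs structural information about $R$ (bipartiteness, connectivity) that a black-box application of \Cref{thm:main} would not provide.

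One cosmetic point: with $\beta'=\beta/6$, the edge-count hypothesis of \Cref{thm:main} asks for $e(H)\ge (1-\beta'/2)p\binom{n}{2}=(1-\beta/12)p\binom{n}{2}$, whereas you only recorded $e(H)\ge(1-\beta/8)p\binom{n}{2}$. This is of course harmless---Chernoff gives any fixed multiplicative error once $C$ is large enough---but tidy the constants so the inequalities line up.
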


Note that starting with a graph with exactly $ex(n,C_t)+1$ edges is not enough.
Indeed, let $G$ be an extremal example for $ex(n, C_t)$ with an arbitrary edge $e$ added to it.
Then when taking $G(p)$ with $p=o(\frac 1n)$ \whp $e$ is deleted.
However, the above theorem shows that adding $\beta \binom n2$ edges to the extremal number will be enough, and, in fact, for many values of $t$ this number of edges is even tight.
We show that adding $\beta \binom n2$ edges to the extremal number is necessary in most cases.
However, there are values of $t$ for which only $\omega(n)$ extra edges to the extremal number suffice.
More precisely, this happens when $t < \tfrac12 n$ is odd, and recall that in this case we have $ex(n, C_t) = \floor{\tfrac{n^2}{4}}$.
This demonstrated in the following theorem (For more discussion see \Cref{sec:robust}).

\begin{theorem}\label{cl:TightOdd}
	For every  $\beta > 0$ there exists $C > 0$ such that for an odd $t$ with $\tfrac{C_1}{\log(1/\beta)}\log n \leq t \leq \left(\frac 12-\beta\right)n$ (where $C_1 > 0$ is an absolute constant), and for any $p\geq \tfrac Cn$, if $G$ is a graph on $n$ vertices satisfying
	\[ e(G) \geq ex(n, C_t) + \omega\left(\tfrac 1p \right), \]
	then \whp $G(p)$ contains a copy of $C_t$.
\end{theorem}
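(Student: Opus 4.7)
The plan exploits that for odd $t < n/2$ the extremal $C_t$-free graph is bipartite, namely $K_{\lfloor n/2\rfloor,\lceil n/2\rceil}$, so the excess edges beyond $\lfloor n^2/4\rfloor$ must show up as violations of any bipartition. Let $(A,B)$ be a maximum cut of $G$, let $G_{\mathrm{bip}} := (V(G), E_G(A,B))$ denote the bipartite bulk, and set $E_0 := E(G) \setminus E_G(A,B)$ for the \emph{internal} edges. From the max-cut lower bound $e_G(A,B) \geq e(G)/2 \geq n^2/16$ we get that $G_{\mathrm{bip}}$ is dense, and from $|A||B|\leq \lfloor n^2/4\rfloor$ we get
\[ |E_0| = e(G) - e_G(A,B) \geq e(G) - \lfloor n^2/4 \rfloor \geq \omega(1/p). \]
Any internal edge $uv$ (say $u,v\in A$) combined with a $u$--$v$ path of length $t-1$ inside $G_{\mathrm{bip}}$ yields a copy of $C_t$; the parity matches because $t-1$ is even and same-side paths in a bipartite graph have even length.

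I would then couple $G(p)$ with the union of two independent copies $G(q_1) \cup G(q_2)$, where $q_1=q_2=1-\sqrt{1-p} \approx p/2$. In the first round, expose $G(q_1)$ and apply the Key Lemma of the paper, the main technical tool designed for finding prescribed-length paths in random subgraphs of sufficiently pseudo-random hosts, to $G_{\mathrm{bip}}(q_1)$. After a preliminary cleaning making $G_{\mathrm{bip}}$ effectively $(p/2,\eta)$-upper-uniform on all but a negligible vertex set, this should yield that w.h.p.\ every pair $u,v$ on the same side of $(A,B)$ is joined by a path of length exactly $t-1$ in $G_{\mathrm{bip}}(q_1)$. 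In the second round, since $|E_0|\geq \omega(1/q_2)$, Chernoff bounds give $|E_0 \cap G(q_2)| \to \infty$ w.h.p.; any surviving internal edge then closes, via its first-round path, into a cycle $C_t \subseteq G(p)$.

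The main obstacle is the first round: producing paths of \emph{exactly} the prescribed length $t-1$ between essentially every same-side pair. Standard random-graph arguments supply paths of lengths in a growing range but do not pin down a single length, and it is precisely this fine control that the Key Lemma is designed to deliver. Verifying its hypotheses for $G_{\mathrm{bip}}$, which only comes with an edge-density guarantee, requires a preliminary regularization step — for instance, removing vertices of abnormally high or low degree so the remainder is upper-uniform — and the per-pair failure probability must be sub-polynomial so that a union bound over the $\leq \binom{n}{2}$ candidate internal-edge endpoints survives, since $E_0$ is chosen adversarially by $G$ and may concentrate on any prescribed same-side pairs.
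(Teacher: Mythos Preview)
Your overall strategy---use the excess $\omega(1/p)$ edges over $\lfloor n^2/4\rfloor$ as ``parity-breaking'' internal edges relative to a max-cut, and close each surviving one into a $C_t$ via an even path in the bipartite bulk---is indeed the spirit of the paper's argument in the key subcase. However, the proposal as written has two genuine gaps.

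First, and most seriously, the Key Lemma (\Cref{lem:key}) does \emph{not} produce paths of a prescribed length between \emph{prescribed endpoints}. It converts a path or cycle in the $\varepsilon$-graph $S$ into a cycle of prescribed length in $G_0$, with no control over which vertices are used. Your plan requires, for every same-side pair $u,v$ that might be the endpoints of a surviving internal edge, a $u$--$v$ path of length exactly $t-1$ in $G_{\mathrm{bip}}(q_1)$, with sub-polynomial failure probability per pair to survive the union bound. Nothing in the paper's toolkit gives this, and it is false in general: $G_{\mathrm{bip}}$ need not even be connected (take $G$ to be two disjoint cliques of size $n/2$, where $e(G)\approx n^2/4$ but the max-cut bipartite graph splits into two components), so some same-side pairs have no path between them at all.

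Second, the claim that $G_{\mathrm{bip}}$ can be made $(p/2,\eta)$-upper-uniform by removing a negligible set of vertices is not justified and is false for arbitrary $G$. The only hypothesis on $G$ is an edge count; its max-cut bipartite graph can be highly non-uniform. The paper handles this by applying the Regularity Lemma to $G$ first and doing a case analysis on the reduced graph $R$. Only when $R$ is close to bipartite (Subcase~2.1) does one conclude that $G$ is close to a nearly complete balanced bipartite graph, and even then the argument does not attempt to connect arbitrary pairs: it first finds a matching of size $\omega(1)$ among the internal edges after sampling, extends each matching edge to a short path with high-degree endpoints via a multi-round exposure, embeds a single tree $T^{(r,h)}_\ell$ with large leaf sets in the bipartite part, and then shows that \emph{some} extended matching edge connects to both leaf sets. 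When $R$ is far from bipartite (Case~1 and Subcase~2.2), the odd cycle is found directly via the Key Lemma from an odd cycle in $R$, and the $\omega(1/p)$ excess is not even needed. Your proposal collapses this case distinction and in doing so loses the structural input that makes the bipartite-bulk argument work.
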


Finally, in \Cref{sec:Ramsey} we show some applications to Ramsey-type problems about cycles in random graphs. In particular, we use the power of the Key Lemma in order to argue about a typical appearance of a monochromatic cycle of a prescribed length in multicolored sparse random graphs.

\section{Notation and preliminaries}

Our graph-theoretic notation is standard, 
in particular we use the following. 
Let $[n]:=\{1,\dots , n\}$. For a positive real number $\ell$, we denote by $\floor{\ell}_{odd}$ (respectively, $\floor{\ell}_{even}$) the largest odd (respectively, even) integer $m$ with $m\leq \ell$.

For a graph $G = (V,E)$ and a set $U\subset V$, let $G[U]$ denote the corresponding vertex-induced subgraph of $G$. We also denote $e(G)=|E(G)|$ and $v(G)=|V(G)|$. For $U\subset V$ we let $\Gamma_G(U)=\{v\in V\setminus U \mid \exists u\in U\text{ s.t. }\{u,v\}\in E  \}$ be the neighborhood of $U$ in $G$.
For an integer $k$ and $V_i\subseteq V$, $i\in [k]$, we say that $\Pi=(V_1,\dots,V_k)$ is a \textit{partition} of $V$ if $V=\bigcup_{i\in [k]}V_i$ and $V_i\cap V_j=\emptyset$ for every $i\neq j$. 

\subsection{Known extremal results}

To prove our result, we use two classical theorems, one by Woodall~\cite{Woodall} about cycles, and the other one by Erd\H{o}s and Gallai~\cite{ErdosGallai} regarding paths.

\begin{theorem}[\cite{ErdosGallai}, Theorem 2.6]\label{thm:ErdosGallai}
	Let $G$ be an $n$-vertex graph with more than $\left\lfloor\frac 12 n (t-1)\right\rfloor$ edges.
	Then $G$ contains a path of length at least $t$ (the number of edges). 
\end{theorem}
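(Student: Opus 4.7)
The plan is to proceed by induction on $n$, the number of vertices. The base cases $n \leq t$ are vacuous, since the hypothesis then demands more than $\lfloor n(t-1)/2 \rfloor \geq \binom{n}{2}$ edges, which is impossible. So I may assume $n > t$. Before the inductive step, I would first reduce to the case where $G$ is connected: if $G$ decomposes into components on $n_i$ vertices with $m_i$ edges and none satisfies $m_i > \lfloor n_i(t-1)/2 \rfloor$, then summing yields $e(G) \leq \sum_i \lfloor n_i(t-1)/2 \rfloor \leq \lfloor n(t-1)/2 \rfloor$, contradicting the hypothesis; hence some component itself satisfies the assumption, and I pass to it.

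I would then split into two cases based on the minimum degree of $G$. In the first case, if some vertex $v$ has $\deg(v) \leq \lfloor (t-1)/2 \rfloor$, I delete it. The graph $G - v$ has $n-1$ vertices and, after a short parity check of $t-1$, still more than $\lfloor (n-1)(t-1)/2 \rfloor$ edges, so by the inductive hypothesis it contains a path of length $t$. In the second case, every vertex has degree at least $\lceil t/2 \rceil$. I would take a longest path $P = v_0 v_1 \cdots v_\ell$ in $G$ and suppose for contradiction that $\ell < t$. By maximality of $P$, all neighbors of $v_0$ and $v_\ell$ lie on $P$, and a standard P\'osa-style pigeonhole argument applied to the neighbor sets of the two endpoints, each of size at least $t/2$ within the $\leq t$ available indices, produces an index $i$ with $v_0 v_{i+1}, v_\ell v_i \in E(G)$. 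Rotating the path through these two edges yields a cycle of length $\ell + 1$ on the vertex set of $P$; since $\ell + 1 \leq t < n$ and $G$ is connected, some vertex outside this cycle has a neighbor on it, which provides a path of length $\ell + 1 > \ell$ in $G$ and contradicts the maximality of $P$.

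The main obstacle, though essentially routine, is the floor-function bookkeeping in Case~1: one must verify, with a short case split on the parity of $t-1$, that deleting a vertex of degree at most $\lfloor (t-1)/2 \rfloor$ genuinely preserves the strict inequality $e > \lfloor n(t-1)/2 \rfloor$ after decrementing $n$. Beyond this, the proof is a clean combination of induction with the classical longest-path extension technique, and I do not expect any deeper difficulty.
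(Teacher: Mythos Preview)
The paper does not prove this theorem; it is quoted from Erd\H{o}s and Gallai~\cite{ErdosGallai} as a classical black box, with only a one-line remark on tightness (the disjoint-cliques construction). So there is no ``paper's own proof'' to compare against.

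That said, your proposal is correct and is essentially the standard argument. The reduction to connected graphs and the induction step in Case~1 are fine; the floor bookkeeping you flag is indeed routine (for $t-1$ even everything is exact, and for $t-1$ odd it reduces to $\lfloor n/2\rfloor \ge \lfloor (n-1)/2\rfloor$). In Case~2 your pigeonhole is clean: with $A=\{i: v_0v_{i+1}\in E\}$ and $B=\{i:v_\ell v_i\in E\}$ both inside $\{0,\dots,\ell-1\}$ and $|A|+|B|\ge 2\lceil t/2\rceil\ge t>\ell$, you get $A\cap B\neq\emptyset$ and hence a Hamiltonian cycle on $V(P)$; the extension step then uses $n>t\ge \ell+1$, which holds since you are in the inductive range $n>t$. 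No gaps.
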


By looking at a graph consisting of $\left\lfloor\frac {n}{t}\right\rfloor$ vertex-disjoint cliques of size $t$ and another clique on the remaining vertices, one can observe that the above result is tight.

\begin{theorem}[\cite{Woodall}, Corollary 11]\label{thm:woodall}
	Let $G$ be a graph on $n\geq 3$ vertices and let $3\leq t\leq n$. Assume that $e(G)\geq w(t,n)\cdot \binom n2$ where 	
	\begin{align*}
	w\left(t,n \right)\cdot \binom n2 \coloneqq 
	\begin{cases}
	\binom{t-1}{2}+\binom{n-t+2}2+1, & \text{if } t\geq \frac 12(n+3),\\
	\left\lfloor \frac 14n^2 \right\rfloor+1, & \text{if } t< \frac 12(n+3).
	\end{cases}
	\end{align*}
Then $G$ contains a cycle of length $d$ for any $3\leq d\leq t$.  

\end{theorem}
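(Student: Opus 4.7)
The plan is to prove \Cref{thm:woodall} in two stages: first reducing the pancyclicity claim to the existence of a single cycle of length $t$, and then treating the two ranges of $t$ by separate arguments. For the reduction, I would observe that $w(t,n)\binom{n}{2}$ is non-decreasing in $t$: in the range $t \geq \tfrac{1}{2}(n+3)$, a direct computation gives
\[\binom{t}{2} + \binom{n-t+1}{2} - \binom{t-1}{2} - \binom{n-t+2}{2} = 2t - n - 2 \geq 1;\]
in the range $t < \tfrac{1}{2}(n+3)$ the expression is constant, and the two branches match at the boundary. Hence it suffices to prove, for each individual $t$, that the stated edge count forces a cycle of length exactly $t$; applying this separately to every $3 \leq d \leq t$ (whose threshold is at most that of $t$) then yields all required cycles, with the base case $d=3$ being Mantel's theorem.

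For the \textbf{large $t$ case} ($t \geq \tfrac{1}{2}(n+3)$), the extremal example is a wedge of cliques $K_{t-1}$ and $K_{n-t+2}$ sharing one vertex, whose longest cycle has length exactly $t-1$. I would take a longest path $P = v_0 v_1 \cdots v_k$ in $G$ and run a P\'osa-style rotation at its endpoints, producing a large set $S$ of alternative endpoints of paths of the same length. If $|S|$ is large, the density hypothesis forces edges between $S$ and interior vertices of $P$ at relative distances that close a cycle of length exactly $t$. If $|S|$ stays small, a stability argument would show that $G$ is structurally close to the wedge example, and the strict inequality $e(G) > \binom{t-1}{2} + \binom{n-t+2}{2}$ then supplies the crucial crossing edge between the two cliques, from which cycles of every length up to $t$ can be read off by routing through the shared vertex.

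For the \textbf{small $t$ case} ($t < \tfrac{1}{2}(n+3)$), we have $e(G) > \lfloor n^2/4 \rfloor$. When $t$ is even, \Cref{thm:ErdosGallai} yields a path of length $t-1$ (the density bound easily exceeds the Erd\H{o}s--Gallai threshold in this range), and a neighborhood-intersection argument between its endpoints closes it into a $C_t$. When $t$ is odd, the bipartite extremal example $K_{\lfloor n/2 \rfloor, \lceil n/2 \rceil}$ must be circumvented, so I would split on whether $G$ is $\varepsilon$-close to bipartite: in the close case, the excess edges above $\lfloor n^2/4 \rfloor$ provide an odd chord that, combined with alternating paths of appropriate length across the near-bipartition, yields $C_t$; in the far case, direct path-extension together with triangle-count bounds produces $C_t$.

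The \textbf{main obstacle} is the odd-$t$ subcase of the small-$t$ range: the proximity to the bipartite extremal example tightly constrains the structure, and producing an odd cycle of exactly the prescribed length (rather than just some odd length) requires a careful chord-placement argument matching both parity and length simultaneously. By contrast, the large-$t$ case should be comparatively clean via rotation-extension together with the rigidity of the wedge extremal example.
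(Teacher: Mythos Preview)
The paper does not prove this theorem: it is quoted as Corollary~11 from Woodall's 1972 paper and used as a black box throughout (e.g.\ in \Cref{cor:key} and \Cref{cl:Rconnectivity}). There is therefore no in-paper proof to compare your proposal against.

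Regarding the substance of your sketch: the monotonicity reduction is correct. But after the reduction, the small-$t$ case asks you to show that any graph with more than $\lfloor n^2/4\rfloor$ edges contains $C_d$ for \emph{every} $3\le d<\tfrac12(n+3)$, both parities, and this is itself a significant theorem, not a corollary of \Cref{thm:ErdosGallai}. Your outline for odd $d$ (dichotomy on closeness to bipartite, then locate a chord of the right parity) has the right shape but is far from a proof: turning ``there is an odd edge inside one part'' into ``there is an odd cycle of length exactly $d$'' for every such $d$ simultaneously is where the real work lies, and nothing in your sketch supplies it. Likewise, in the large-$t$ range, P\'osa rotation naturally produces a cycle of length \emph{at least} some threshold; extracting one of length exactly $t$ is not automatic, and your stability step (``$G$ is close to the wedge, so the strict inequality supplies a crossing edge'') is only gestured at. If you intend to supply a self-contained proof, both of these gaps need substantial filling; if you only intend to use the result, a citation suffices, which is exactly what the paper does.
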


The result in \Cref{thm:woodall} is tight in the sense that there are graphs with $w(t,n)\binom n2-1$ edges containing no cycle of some length between $3$ and $t$, and corresponding extremal examples are constructed explicitly (as was mentioned briefly in the beginning of the introduction).
For $t\geq \frac 12(n+3)$, the graph consisting of two cliques, one of size $\binom{t-1}2$ and the other of size $\binom{n-t+2}2$, sharing exactly one vertex, does not contain a cycle of length $t$ or longer.
For $t< \frac 12(n+3)$, the complete bipartite graph with $\left\lfloor \frac 14n^2 \right\rfloor$ edges does not contain a cycle of any odd length, and in particular of any odd length between $3$ and $t$.

Note that the function $w(t, n)$ of Woodall is strongly related to the function $g^\gamma(t, n)$ given in \Cref{def:gtn}.
In particular, for odd values of $t$ we have $w(t,n) = g_o(t,n)$, and furthermore, $w(t,n)\geq \frac 12$ for any $t$ and $n$.
In addition, $w(t,n)$ is monotone increasing in $t$ for this case.
So for any odd $t$, $w(t,n)\tbinom n2=g_o(t,n)\tbinom n2=ex(n,C_t)-1$.
As for even values of $t$, we get $w(t,n) = g_e(t,n)$ only when $t\geq \tfrac12(n+3)$.
For an even $t< \frac 12(n+3)$, note that we \textbf{do not} necessarily have $w(t, n)\binom n2 = ex(n, C_t) + 1$ (although, as mentioned, \Cref{thm:woodall} is still tight because of the requirement of having all cycles, also the odd ones, of length at most $t$.)
For this reason, we also make use of $ex(n,P_t)$ in \Cref{def:gtn} for even values of $t < \frac 12(n+3)$. 

\begin{remark}\label{re:gtn}
	Note that if $0<\varphi<\frac 12$ is constant and $t=(1-\varphi+o_n(1)) n$ then $w(t,n)=1-2\varphi+2\varphi^2+o_n(1)$. In particular, if $e(G)\geq (1-\varphi)\binom n2$, then $G$ contains a cycle of length $d$ for any $3\leq d\leq (1-\varphi)n$.
\end{remark}

Another result to be used in this paper in a significant way is by Friedman and Pippenger~\cite{FP}, regarding the existence of large trees in expanding graphs.
\begin{theorem}[\cite{FP}, Theorem 1]\label{FP}
	Let $T$ be a tree on $k$ vertices of maximum degree at most $d$.
	Let $H$ be a non-empty graph such that, for every $X\subset V(H)$ with $|X| \leq 2k-2$ we have $|\Gamma_H(X)| \geq (d+1)|X|$.
	Let further $v\in V(H)$ be an arbitrary vertex of $H$.
	Then $H$ contains a copy of $T$, rooted at $v$.
\end{theorem}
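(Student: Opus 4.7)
The plan is to build an embedding $\phi : T \to H$ one vertex at a time. I would order the vertices of $T$ as $u_1, u_2, \ldots, u_k$ with $u_1$ the root, so that each prefix $T_i := T[\{u_1, \ldots, u_i\}]$ is a subtree of $T$ (any BFS or DFS order works). Set $\phi(u_1) = v$, and at step $i+1$ try to pick an image for $u_{i+1}$ that is adjacent in $H$ to the image of its unique parent in $T_i$ and lies outside $S_i := \phi(V(T_i))$. If every such extension step succeeds, then after $k-1$ steps we obtain the desired copy of $T$ rooted at $v$.

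The naive induction on $k$ is too weak, because each embedded vertex is consumed from $V(H)$ and weakens the expansion property. Following the Friedman--Pippenger strategy, I would strengthen the inductive statement by attaching to each partial embedding a stronger expansion-type invariant. Specifically, for $s \in S_i$ track the residual capacity $a(s) := d - \deg_{T_i}(\phi^{-1}(s))$, and require the invariant that for every $X \subseteq V(H)$ of bounded size, $|\Gamma_H(X) \cup X|$ is at least a weighted sum which charges $(d+1)$ for every vertex of $X$ outside $S_i$ and the smaller amount $a(s) + 1$ for every $s \in X \cap S_i$. The hypothesis $|\Gamma_H(X)| \geq (d+1)|X|$ for $|X| \leq 2k-2$ is exactly what yields this invariant at $i = 1$ (with $a(v) = d$), and the problem reduces to showing it is preserved under every single-vertex extension.

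The main obstacle is verifying that this invariant is maintained. Suppose at step $i+1$ that no choice of neighbor $w$ of $\phi(\mathrm{parent}(u_{i+1}))$ in $V(H) \setminus S_i$ preserves the invariant; then for each candidate $w$ some ``bad'' set $X_w$ witnesses a failure of the required lower bound on $|\Gamma_H(X_w \cup \{w\}) \cup (X_w \cup \{w\})|$. A careful case analysis on how $X_w$ intersects $S_i$, together with the invariant at step $i$, lets one combine a minimal bad configuration with $S_i$ itself to produce a set in $V(H)$ of size at most $2k-2$ whose $H$-neighborhood is smaller than $(d+1)$ times its size, contradicting the theorem's expansion hypothesis. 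The factor of $2$ in $|X| \leq 2k - 2$ is precisely the slack needed here: the partial image has size up to $k-1$, and the witness set may contain up to $k-1$ further vertices used to obstruct expansion. Since the parent of $u_{i+1}$ has $\deg_{T_i}(\cdot) < d$ by the max-degree assumption on $T$, the invariant guarantees at least one valid choice of $w$ at every step, and iterating completes the embedding.
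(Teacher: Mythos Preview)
The paper does not prove this statement at all: it is quoted verbatim as Theorem~1 of Friedman and Pippenger~\cite{FP} and used as a black box in \Cref{prop:TreeEmbd} and \Cref{prop:LargeTreeEmbd}. So there is no ``paper's own proof'' to compare against.

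As for your sketch, it is the correct outline of the original Friedman--Pippenger argument: build the embedding vertex by vertex, and carry a stronger invariant than the bare expansion hypothesis --- one that tracks, for each already-embedded vertex $s$, how many further children it still needs to receive (your $a(s)$), and which asserts a weighted expansion inequality for all $X$ with $|X|\le 2k-2$. That said, the load-bearing step is precisely the one you summarize as ``a careful case analysis \ldots\ lets one combine a minimal bad configuration with $S_i$ itself.'' In the actual proof one does not union the witness with all of $S_i$; rather, one takes an \emph{inclusion-maximal tight set} $Y$ for the current invariant (a set where the weighted inequality holds with equality), shows that the parent vertex has a neighbour $w\notin S_i$ outside $\Gamma_H(Y)\cup Y$, and then verifies that adding $w$ to $S_i$ preserves the invariant because any new tight set must already contain $w$ and can be compared to $Y\cup\{w\}$. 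Your explanation of the constant $2k-2$ (``partial image has size up to $k-1$, and the witness set may contain up to $k-1$ further vertices'') is not how the bound actually enters; it arises because one needs the invariant for sets up to twice the eventual image size so that unions of two tight sets of size $\le k-1$ stay within range when checking submodularity-type inequalities. None of this invalidates your plan, but if you were to write it out you would need to state the invariant precisely and carry out the maximal-tight-set argument rather than the vaguer contradiction you describe.
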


\subsection{Sparse Regularity Lemma}\label{sec:SRL}
In order to prove~\Cref{thm:main}, we make use of a variant of Szemer\'edi’s Regularity Lemma~\cite{RegularityLemma} for sparse graphs, the so-called
Sparse Regularity Lemma due to Kohayakawa~\cite{SRL} and R\"odl (see~\cite{ConlonSRL,GerkeSteger,KRSparse}).
The sparse version of the Regularity Lemma is based on the following definition.
\begin{definition}\label{def:reg}
	Let a graph $G = (V,E)$ and a real number $p\in(0,1]$ be given.
	We define the \emph{$p$-density} of a pair of non-empty, disjoint sets $U,W\subseteq V$ in $G$ by
	\[d_{G,p}(U,W) = \frac{e_G(U,W)}{p|U||W|}. \]
	For any $0<\varepsilon\leq 1$, the pair $(U,W)$ is said to be \emph{$(\varepsilon,G,p)$-regular}, or just \emph{$(\varepsilon,p)$-regular} for short, if, for all $U'\subseteq U$ with $|U'|\geq \varepsilon|U|$ and all $W'\subseteq W$ with $|W'|\geq \varepsilon |W|$, we have
	\begin{align}\label{eq:regularity}
	\left|d_{G,p}(U,W) - d_{G,p}(U',W') \right| \leq \varepsilon.
	\end{align}
	
	We say that a partition $\Pi = (V_1, \ldots, V_k)$ of $V$ is \emph{$(\varepsilon, p)$-regular} if $\left||V_i| - |V_j|\right| \leq 1$ for all $i, j \in [k]$, and, furthermore, at least $(1-\varepsilon)\binom k2$ pairs $(V_i, V_j)$ with $1\leq i < j \leq k$ are $(\varepsilon, p)$-regular.
	
	In the case $p=1$ we say that the pair (or the partition) is $\varepsilon$-regular.
\end{definition}

\begin{theorem}[Sparse Regularity Lemma \cite{SRL}]\label{thm:srl}
	For any given $\varepsilon>0$ and $k_0\geq 1$, there are constants $\eta = \eta(\varepsilon, k_0) > 0$ and $K_0 = K_0(\varepsilon, k_0) \geq k_0$ such that any $(p,\eta)$-upper-uniform graph $G$ on $n$ vertices, for large enough $n$, with $0<p\leq 1$ admits an $(\varepsilon, p)$-regular partition of its vertex set into $k$ parts, where $k_0\leq k \leq K_0$.
\end{theorem}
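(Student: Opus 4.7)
My plan is to follow the standard index-increment proof of Szemerédi's Regularity Lemma, with the $p$-density $d_{G,p}$ replacing the ordinary density, and to use $(p,\eta)$-upper-uniformity exactly where it is needed to keep the induced quantities bounded. Define the \emph{$p$-index} of a partition $\Pi=(V_1,\ldots,V_k)$ by
\[ \mathrm{ind}_p(\Pi) \;=\; \sum_{1\le i<j\le k} \frac{|V_i||V_j|}{n^2}\, d_{G,p}(V_i,V_j)^2, \]
and more generally, for a refinement $\Pi'$ of a pair $(V_i,V_j)$ into $(V_i^a)_a$, $(V_j^b)_b$, the analogous weighted sum of squared $p$-densities across the refined pair. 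First I would show, by an application of Cauchy--Schwarz (equivalently, convexity of $x\mapsto x^2$ and the fact that $d_{G,p}$ is a conditional expectation of edge-indicators scaled by $1/p$), that refining $\Pi$ can only increase $\mathrm{ind}_p$; and that if $(V_i,V_j)$ is \emph{not} $(\varepsilon,p)$-regular as witnessed by $U'\subseteq V_i$, $W'\subseteq V_j$, then splitting $V_i$ via $\{U',V_i\setminus U'\}$ and $V_j$ via $\{W',V_j\setminus W'\}$ increases the contribution of this pair to the index by at least $\varepsilon^4\,|V_i||V_j|/n^2$.

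Summing this over at least $\varepsilon\binom{k}{2}$ irregular pairs (the definition forces this number as soon as $\Pi$ is not $(\varepsilon,p)$-regular) yields an increment of at least $\varepsilon^5/2$ in $\mathrm{ind}_p$ when one passes to the common refinement. Iterating this, starting from an arbitrary equipartition into $k_0$ classes, gives a sequence of refinements whose $p$-indices are strictly increasing by a fixed positive amount at each step. Here is where the hypothesis is used: provided the index is bounded above by an absolute constant, the iteration must stop after at most $\mathcal O(\varepsilon^{-5})$ steps, and the resulting partition has at most $K_0=K_0(\varepsilon,k_0)$ classes (obtained from the standard tower-type bound on the number of refinements).

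The main obstacle, and the place where the sparse setting differs from the dense one, is bounding $\mathrm{ind}_p$ from above: unlike in the dense case, where $d(U,W)\le 1$ automatically, the quantity $d_{G,p}(U,W)$ can a priori be arbitrarily large. This is exactly what $(p,\eta)$-upper-uniformity controls. Choose $\eta=\eta(\varepsilon,k_0)$ so small that (i) $\eta \ll 1/K_0$, which guarantees that every class $V_i$ appearing in any partition produced during the iteration satisfies $|V_i|\ge \eta n$, and (ii) any irregularity witness $U'\subseteq V_i$ has $|U'|\ge \varepsilon|V_i|\ge \eta n$ as well (adjust $\eta$ by an extra factor of $\varepsilon$ if necessary). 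Then \Cref{def:UpperUni} gives $d_{G,p}(U,W)\le 1+\eta$ for every pair of disjoint classes (or witnesses) appearing in the argument, so $\mathrm{ind}_p\le (1+\eta)^2$ throughout. Combined with the per-step increment of $\varepsilon^5/2$, this yields the required upper bound on the number of iterations and on the final number of parts $K_0$.

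Finally, a minor but necessary bookkeeping point is to convert the output of the refinement process into an equipartition with $\big||V_i|-|V_j|\big|\le 1$; this is achieved, as in the classical proof, by the customary ``cleaning'' step in which a small exceptional set is broken up and distributed among the classes, which affects the number of irregular pairs by at most $o(k^2)$ and can be absorbed into $\varepsilon$ by shrinking $\eta$ once more. Taking $\eta(\varepsilon,k_0)$ small enough to accommodate all the above constraints and $K_0(\varepsilon,k_0)$ large enough to accommodate the tower-type growth across the $\mathcal O(\varepsilon^{-5})$ refinement steps produces the claimed $(\varepsilon,p)$-regular partition.
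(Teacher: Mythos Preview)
The paper does not prove this statement: Theorem~\ref{thm:srl} is quoted from Kohayakawa~\cite{SRL} (see also~\cite{ConlonSRL,GerkeSteger,KRSparse}) and used as a black box throughout, so there is no ``paper's own proof'' to compare against. Your outline is the standard index-increment argument for the sparse regularity lemma and is essentially correct as a sketch.

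Two small points worth tightening. First, as written the choice of $\eta$ appears to depend on $K_0$, while $K_0$ depends on the number of iterations, which in turn depends on the upper bound for $\mathrm{ind}_p$ and hence on $\eta$. The circularity is only apparent: any $\eta\le 1$ already yields $\mathrm{ind}_p\le (1+\eta)^2\le 4$, so the number of iterations---and hence $K_0$---is bounded purely in terms of $\varepsilon$ and $k_0$, after which $\eta$ can be shrunk to satisfy your conditions (i) and (ii). Second, condition (ii) is in fact unnecessary: the energy-increment step (the $\varepsilon^4|V_i||V_j|/n^2$ gain from an irregular pair) follows from the defect form of Cauchy--Schwarz and needs no upper bound on the $p$-densities of the witnessing subsets $U',W'$. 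Upper-uniformity is used only to cap $\mathrm{ind}_p$ via the classes of the (equitized) partition, for which condition (i) suffices.
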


\begin{remark}\label{parameters}
	In the main proof we make an extensive use of the Sparse Regularity Lemma (and, in fact, also of the Regularity Lemma in \Cref{sec:robust}). As a result, we need to keep many parameters in mind. For simplicity, we present here some of the parameters and the relations between them. Unless mentioned otherwise, these are the values of the parameters during the proofs in the next sections, given here for future reference:\\
	
	\begin{tabular}{ l l l }
	$\varepsilon\leq \tfrac \beta{10000}$ & regularity parameter\\
	$\rho = 10\varepsilon$ & density parameter\\
	$k\geq k_0\geq \frac 2{\varepsilon^2}$ & number of clusters\\
	$\eta\leq \min(\frac{1}{3K_0}, \eta^*)$ & parameter of upper uniformity\\
	$\tau = \frac \beta{32}$ & ``extra" number of edges we have in the reduced graph\\
	$\delta=48\varepsilon$ & proportion of number of vertices we are not able to use in each cluster\\
	$\gamma\leq \frac {2(1-48\varepsilon)}k$ & parameter of $g^\gamma_e(t,n)$ and $g^\gamma(t,n)$ that appears in \Cref{def:gtn} and in \Cref{thm:main}\\
	$m=\frac nk$ & size of each cluster up to $\pm1$
	\end{tabular}
	where $K_0$ and $\eta^*$ are as given in \Cref{thm:srl} (taking $\eta^*$ to be $\eta$).
\end{remark}

\subsection{Organization}
As was mentioned before, in the proof of Theorem~\ref{thm:main} we rely heavily on the Sparse Regularity Lemma (see \Cref{sec:SRL}). Roughly speaking, we use the lemma to obtain a regular partition of our graph into clusters. Then, we define an auxiliary graph (the \textit{reduced graph}) in which each vertex represents a cluster of the original graph, and show that if this auxiliary graph has enough edges, then the original graph contains the desired cycle. For this, in~\Cref{sec:reduced} we define the Reduced Graph and prove that it contains many edges.  Then, in \Cref{sec:key} we present the Key Lemma used in the paper to convert a cycle in the reduced graph to a cycle of an appropriate length in the original graph. In the same section we give the proof of~\Cref{thm:main} using the Key Lemma. \Cref{sec:proofKey} is devoted for the proof of the Key Lemma. In~\Cref{sec:extra} we give some further related results.  

\section{The Reduced Graph}\label{sec:reduced}
\begin{definition}[Reduced Graph]\label{def:RGraph}
	Let $\varepsilon>0$, $k\geq 1$ an integer, $0\leq p\leq1$, and $0 < \rho \leq 1$.
	Let $G_0$ be a graph on $n$ vertices, and $\Pi=(V_1,\dots,V_k)$ a partition of its vertices.
	We define the \emph{reduced graph} $R(G_0, \Pi, \rho, \varepsilon, p)$ to be the graph on the vertex set $\{1, \ldots, k\}$, where vertices $i$ and $j$ are connected by an edge if and only if $(V_i, V_j)$ is $(\varepsilon,p)$-regular and $d_{G_0,p}(V_i, V_j) \geq \rho$.
	If we consider the reduced graph where $p = 1$, we omit this parameter from the notation.  
\end{definition}

\begin{lemma}\label{lem:EdgesRGraph}
	Let $0 < \beta < \tfrac14$, $x\in [0,1)$ such that $x+\beta<1$.
	Let $\varepsilon \leq \tfrac \beta {1000}$, $k\geq \tfrac{100}\beta$, $\tau = \tfrac \beta {32}$, and $\eta \leq \tfrac1{3k}$ be positive.
	Assume that $G$ is an $(p,\eta)$-upper uniform graph, and $e(G) \geq (1-\beta/2)p\binom n2$, for some $0 < p \coloneqq p(n) \leq 1$.
	Let $G'$ be the graph obtained from $G$ by keeping at least $\left(x + \beta \right)e(G)$ edges, and assume that $\Pi=(V_1,\dots,V_k)$ is an $(\varepsilon,p)$-regular partition of $G'$.
	Let $R\coloneqq R(G',\Pi, \rho, \varepsilon ,p)$ be the reduced graph as in \Cref{def:RGraph} for $\rho = 10\varepsilon$.
	Then 
	\[e(R) \geq \left(x + \tau \right) \binom k2. \]
\end{lemma}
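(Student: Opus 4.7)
The plan is to upper-bound $e(G')$ by splitting its edges into four groups according to the type of pair $(V_i, V_j)$ in which they fall, and then invert the inequality to lower-bound $e(R)$.

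Write $m = \lfloor n/k \rfloor$, so each cluster has size $m$ or $m+1$. The parameter choices give $|V_i| \geq m \geq 2\eta n$ for $n$ large (since $\eta \leq 1/(3k)$), which lets me invoke \Cref{re:UpperUni} on each cluster. I then partition $E(G')$ into:
\begin{enumerate}
\item[(i)] edges inside clusters: using \Cref{re:UpperUni}, this is at most $\sum_i (1+\eta)p \binom{|V_i|}{2} \le (1+\eta)p \cdot k\binom{m+1}{2}$, which for $n$ large is at most $\tfrac{1+o(1)}{k} p\binom{n}{2} \le 0.02\beta \cdot p\binom{n}{2}$ since $k \ge 100/\beta$;
\item[(ii)] edges between non-$(\varepsilon,p)$-regular pairs: there are at most $\varepsilon\binom{k}{2}$ such pairs, each carrying at most $(1+\eta)p|V_i||V_j|$ edges by $(p,\eta)$-upper-uniformity; this totals at most $\varepsilon(1+\eta)(1+o(1))\,p\binom{n}{2} \le 2\varepsilon \, p\binom{n}{2}$;
\item[(iii)] edges between regular pairs of $p$-density less than $\rho$: at most $\binom{k}{2}\rho p(m+1)^2 \le \rho(1+o(1))p\binom{n}{2} \le 11\varepsilon\, p\binom{n}{2}$;
\item[(iv)] edges between pairs corresponding to edges of $R$: at most $e(R)\cdot (1+\eta)p(m+1)^2$.
\end{enumerate}

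Using $e(G') \ge (x+\beta)e(G) \ge (x+\beta)(1-\beta/2) p\binom{n}{2}$, summing (i)-(iv) and rearranging yields
\begin{align*}
e(R)\cdot (1+\eta)p(m+1)^2 \;\ge\; \Bigl[(x+\beta)(1-\tfrac\beta 2) - \tfrac{0.02\beta + 13\varepsilon + o(1)}{1}\Bigr] p\binom{n}{2}.
\end{align*}
Since $(m+1)^2 \le (n/k+1)^2$ and $\binom{k}{2}(n/k+1)^2 \le (1-1/k)(1+o(1))\binom{n}{2}$, dividing through gives
\begin{align*}
\frac{e(R)}{\binom{k}{2}} \;\ge\; \frac{(x+\beta)(1-\beta/2) - 0.02\beta - 13\varepsilon - o(1)}{(1+\eta)(1-1/k)}.
\end{align*}

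With $\varepsilon \le \beta/1000$ and $\eta \le 1/(3k) \le \beta/300$, and using $x \le 1$ and $\beta < 1/4$, the numerator is at least $x + \beta/2 - \beta^2/2 - 0.04\beta \ge x + \tfrac{3\beta}{8} - 0.04\beta$, while the denominator is at most $1 + \beta/300$. Hence for $n$ large,
\begin{align*}
\frac{e(R)}{\binom{k}{2}} \;\ge\; x + \tfrac{\beta}{4} \;\ge\; x + \tau,
\end{align*}
since $\tau = \beta/32$. This gives the required bound.

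The only real obstacle is bookkeeping: tracking the various $(1+o(1))$ factors arising from $|V_i| \in \{m, m+1\}$ and from the discrepancy between $(n/k)^2/\binom{n}{2}$ and $1/\binom{k}{2}$, and verifying that the chosen hierarchy $\varepsilon \le \beta/1000$, $k \ge 100/\beta$, $\eta \le 1/(3k)$, $\rho = 10\varepsilon$ is comfortably sufficient so that the loss is dominated by a small constant fraction of $\beta$. The slack between the gain $3\beta/8$ and the required $\tau = \beta/32$ is what makes these choices work without further tuning.
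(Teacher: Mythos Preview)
Your proof is correct and follows essentially the same route as the paper: decompose $E(G')$ into edges inside clusters, edges in irregular pairs, edges in regular low-density pairs, and edges in pairs of $R$, bound each via $(p,\eta)$-upper-uniformity or the density threshold, and invert the resulting inequality against the lower bound $e(G')\ge(x+\beta)(1-\beta/2)p\binom n2$. The paper's version differs only cosmetically, working with $\tfrac{pn^2}{2}$ instead of $p\binom n2$ and carrying explicit $(1+\eta)$ factors rather than $o(1)$ terms, but the structure and the arithmetic slack are the same.
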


\begin{proof}
	Denote $m = \tfrac nk$, and recall that $\lfloor m \rfloor \leq |V_i| \leq \lceil m \rceil$ for any $i\in [k]$.
	Now we count the number of edges of $G'$.
	
	\begin{itemize}
		\item The number of edges with endpoints in the same $V_i$ for $1\leq i \leq k$ is at most
		\begin{align*}
			k(1+\eta)p\binom{\ceil{m}}2 \leq \frac 2k (1+\eta)^2 \frac {pn^2}2
		\end{align*}
		
		\item The number of edges in irregular pairs is at most
		\begin{align*}
			\varepsilon \binom k2 (1+\eta)p\ceil{m}^2 \leq \varepsilon (1+\eta)^2 \frac{pn^2}2.
		\end{align*}
		
		\item The number of edges in pairs that are of $p$-density less than $\rho$ is at most
		\begin{align*}
			\binom k2 \rho p \ceil{m}^2 \leq (1+\eta)\rho \frac{pn^2}2.
		\end{align*}
		
		\item The number of edges in $(\varepsilon, p)$-regular pairs $(V_i, V_j)$ with $p$-density at least $\rho$ is at most
		\begin{align*}
		e(R) (1+\eta)p\ceil{m}^2 \leq e(R)(1+\eta)^2\frac{2}{k^2}\cdot\frac{pn^2}2.
		\end{align*}
	\end{itemize}

	In total we get
	\begin{align*}
	e(G') \leq(1+\eta) \left((1+\eta)\left(\frac{2}{k^2}e(R) + \varepsilon + \frac 2k \right) + \rho \right)\frac{pn^2}2.
	\end{align*}
	
	On the other hand, recall that 
	\begin{align*}
		e(G') &\geq \left(x + \beta \right)e(G) \geq \left(x + \beta \right)(1-\beta/2)p\binom n2 \\
		&\geq \left(x + \beta \right)(1 - 2\beta/3)\frac{pn^2}2,
	\end{align*}
	so we get
	\begin{align*}
	(1+\eta)\left((1+\eta)\left(\frac{2}{k^2}e(R) + \varepsilon + \frac 2k \right) + \rho \right)\frac{pn^2}2 \geq \left(x + \beta \right)(1 - 2\beta/3)\frac{pn^2}2,
	\end{align*}
	and hence
	\begin{align*}
	e(R) > \left(\frac{\left(x + \beta \right)(1 - 2\beta/3) - \rho}{(1+\eta)^2} - \varepsilon - \frac2k\right)\frac{k^2}2 \geq \left(x + \tau \right)\binom k2,
	\end{align*}
	where the last inequality follows by the choice of the parameters $\varepsilon, \eta, \rho, k, \tau$ combined with the fact that $x\leq 1$.
\end{proof}

\section{Key Lemma and proof of Theorem \ref{thm:main}}\label{sec:key}

In this section we state the Key Lemma and then use it to prove \Cref{thm:main}.

\begin{definition}
	Let $G$ be a graph and let $V_1, V_2 \subseteq V(G)$ be two disjoint subsets of vertices with $|V_1|, |V_2| \in \{\floor{m}, \ceil{m} \} $ for some positive number $m$.
	Let $\varepsilon > 0$.
	We say that the pair $(V_1, V_2)$ satisfies the \emph{$\varepsilon$-property} in $G$ if for every two subsets $U_1\subseteq V_1$ and $U_2\subseteq V_2$ with $|U_1|, |U_2| \geq \varepsilon m$, $G$ contains at least one edge between them, i.e., $e(G[U_1, U_2]) > 0$. 
	
\end{definition}

\begin{definition}\label{def:SGraph}
	Let $\varepsilon > 0$ and let $k$ be a positive integer.
	Let $G_0$ be a graph and let $\Pi = (V_1,\dots,V_k)$ be a partition of its vertices into $k$ parts satisfying $\left||V_i| - |V_j| \right| \leq 1$ for all $i,j \in [k]$.
	We define the $\varepsilon$-graph $S\coloneqq S(G_0,\Pi, \varepsilon)$ to be the graph with vertex set $[k]$ where $\{i, j\} \in E(S)$ if the pair $(V_i, V_j)$ satisfies the $\varepsilon$-property in $G_0$. 
\end{definition}

\begin{lemma}[Key Lemma]\label{lem:key}
	Let $0 < \varepsilon < \tfrac1{85}$.
	Let $G_0$ be a graph on $n$ vertices, for large enough $n$, and let $\Pi = (V_1, \ldots, V_k)$ be a partition of its vertices satisfying $\left||V_i| - |V_j| \right| \leq 1$ for all $i, j \in [k]$, where $\tfrac{2}{\varepsilon^2} \leq k $ is a constant.
	Let $S\coloneqq S(G_0,\Pi, \varepsilon)$ be the corresponding $\varepsilon$-graph as in~\Cref{def:SGraph}.
	Then for $\delta = 48\varepsilon$ and any absolute constant $C_1>2.1$ we have the following.
	\begin{itemize}
		\item If $S$ contains a path of an odd length $b$, $1 \leq b < k$, then $G_0$ contains cycles of all even lengths in $\left[\frac {C_1}{\log (1/\varepsilon)}\log n, (1-\delta)a n\right]$, with $a \coloneqq \tfrac {b+1}k$.
		\item If $S$ contains a cycle of an odd length $b$, $3 \leq b < k$, then $G_0$ contains cycles of all odd lengths in $\left[\frac {(b-1)C_1}{2\log (1/\varepsilon)}\log n, (1-\delta)a n\right]$, with $a\coloneqq \tfrac bk$.
	\end{itemize}
\end{lemma}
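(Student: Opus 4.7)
The plan is to convert the path or cycle in $S$ into cycles of prescribed lengths in $G_0$ by embedding vertices along the corresponding cluster sequence and then closing the walk into a cycle of exactly the required length. The key tools are the Friedman--Pippenger theorem (\Cref{FP}) to embed a long bounded-degree tree inside a cluster pair, together with a P\'osa-rotation argument and a bouncing trick to achieve the exact length and correct parity.

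First, for each edge $\{i,j\}$ of the path or cycle in $S$, I would identify cores $V_i^\ast \subseteq V_i$ and $V_j^\ast \subseteq V_j$, each of size at least $(1 - O(\varepsilon))m$, in which every vertex has degree at least $(1-O(\varepsilon))m$ into the opposite core --- otherwise too many low-degree vertices together with their common non-neighbors would give an empty $\varepsilon m \times \varepsilon m$ rectangle and violate the $\varepsilon$-property. After further truncating vertices of atypically high degree, one obtains a bounded-degree subgraph in which small sets still expand by at least a factor of $d+1$, so Friedman--Pippenger applies and one can embed trees of up to $\Theta(\log n / \log(1/\varepsilon))$ vertices within a single cluster pair, explaining the lower bound on the length range.

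To produce cycles of length $\ell$, I would embed a long path in $G_0$ that follows the cluster sequence $V_{i_1}, V_{i_2}, \ldots$ dictated by the path or cycle in $S$: for $\ell$ close to $(1-\delta)an$ the path nearly spans all the clusters, while for the shortest $\ell$ it is confined to a single pair and built by Friedman--Pippenger. P\'osa rotations on this path would then produce a set of at least $\varepsilon m$ potential endpoints in the appropriate cluster, and the $\varepsilon$-property would supply an edge from one such endpoint back to the start, closing the cycle. For fine length adjustment by $\pm 2$, I would use a \emph{bouncing} trick: given a path with a step $u \to v$ across an $\varepsilon$-good pair, replace it by $u \to u' \to v' \to v$ using two fresh vertices, whose existence the $\varepsilon$-property guarantees. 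The parity claims follow automatically: a path in $S$ is bipartite, forcing closed walks through the cluster structure to have even length, whereas an odd cycle in $S$ is non-bipartite and a single traversal plus bouncing yields odd-length cycles.

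The main obstacle is the precise length control in the P\'osa/closing step: the $\varepsilon$-property is too coarse to pin down individual edges, so one must combine rotations with bouncing carefully and simultaneously track parity and offset modulo $2$. A secondary obstacle is showing enough expansion for small ($\ll \varepsilon m$) sets needed for Friedman--Pippenger, since the $\varepsilon$-property is blind at this scale; this forces the high-degree truncation mentioned above, and the loss there is what produces both the constant $C_1 > 2.1$ and the $\log(1/\varepsilon)$ denominator in the logarithmic lower bound of the achievable cycle length.
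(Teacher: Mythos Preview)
Your proposal takes a genuinely different route from the paper and also has a real gap.

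\textbf{The difference.} The paper does not use P\'osa rotations or a bouncing trick at all. Instead, in each consecutive pair of clusters along the path (or cycle) in $S$ it embeds a specific tree $T^{(r,h)}_\ell$: two $r$-ary trees of depth $h$ joined by a path of length $\ell$, with $r \approx \tfrac{1}{16\varepsilon}$ and $h = \lceil \log(\varepsilon m)/\log r\rceil$. Each of the two leaf sets then has size at least $r^h \ge \varepsilon m$, so the $\varepsilon$-property directly furnishes an edge between the leaf set of one tree and the leaf set of the next, stitching the pieces together. Exact length control comes for free by choosing $\ell$ (and its distribution among the pairs) arithmetically; no local surgery is needed. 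The long path inside a single cluster pair, required when $t$ is large, is produced by a DFS partition argument (\Cref{cl:LongPath}), again not by rotations. What your approach would buy, if carried out, is a more ``dynamic'' construction; what the paper's approach buys is that the closing step and the length arithmetic become trivial once the right tree is named.

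\textbf{The gap.} Your step ``after further truncating vertices of atypically high degree, one obtains a bounded-degree subgraph in which small sets still expand by at least a factor of $d+1$'' does not work as stated, and this is precisely the place where the $\varepsilon$-property is blind. Friedman--Pippenger needs expansion for \emph{all} sets up to a certain size, including sets much smaller than $\varepsilon m$, and capping high degrees does nothing to force such small sets to expand (the $\varepsilon$-property gives no edge information between an $o(\varepsilon m)$-set and its complement). The paper handles this by a different mechanism (\Cref{prop:expander}): iteratively delete small non-expanding sets from each side; the $\varepsilon$-property is invoked only to argue that this process cannot remove more than $\varepsilon m$ vertices in total, so one is left with a genuine $(\varepsilon m, b)$-bipartite-expander, which is then upgraded to an $(ax,b)$-bipartite-expander by one more application of the $\varepsilon$-property. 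Without this (or an equivalent) argument your appeal to \Cref{FP} is unjustified, and once you have it, the $T^{(r,h)}_\ell$ route is shorter and cleaner than rotations plus bouncing.
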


The assumption in the first item that $b$ is odd is of technical nature and is in fact an artifact of our proof strategy. The proof of the Key Lemma can be found in \Cref{sec:PfKey}.

Using this Key Lemma, we can deduce the existence of long cycles in a graph in cases where there are enough edges in a corresponding $\varepsilon$-graph.

\begin{corollary}\label{cor:key}
	Let $0 < \beta < 1/3$, $\varepsilon = \frac {\beta}{10000}$, $k\geq \frac 2{\varepsilon^2}$, $\gamma \leq \tfrac{2(1-48\varepsilon)}k$, and $\delta = 48\varepsilon$.
	Let $G_0$ be a graph on $n$ vertices, for large enough $n$, and let $\frac {C_1}{\log(1/\beta)}\cdot \log n \leq t \leq (1-C_2\beta )n$, where $C_2 \geq \tfrac{48}{10000}$ is an absolute constant and $C_1$ is the absolute constant from \Cref{lem:key}.
	Assume that there exists a partition $\Pi=(V_1,\dots,V_k)$ of the vertices of $G_0$ such that the corresponding $\varepsilon$-graph $S\coloneqq S(G_0,\Pi, \varepsilon)$ satisfies $e(S) \geq (g^\gamma(t,n)+\beta/32)\binom k2$, where $g^\gamma(t, n)$ is defined in \Cref{def:gtn}.
	Then $G_0$ contains a cycle of length $t$.
\end{corollary}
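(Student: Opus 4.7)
The plan is to apply \Cref{lem:key} to $S$: the Key Lemma takes either an odd cycle or an odd path in $S$ and produces cycles of all odd (respectively, even) lengths in $G_0$ inside a specified interval. I would split along the parity of $t$ and use the classical theorems of Woodall (\Cref{thm:woodall}) and Erd\H{o}s--Gallai (\Cref{thm:ErdosGallai}) applied to $S$, whose edge count by hypothesis is at least $(g^\gamma(t,n)+\beta/32)\binom{k}{2}$.

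Suppose first that $t$ is odd. Set $b^*$ to be the smallest odd integer with $b^*\geq \lceil tk/((1-\delta)n)\rceil$, and at least $3$; the upper bound $t\leq(1-C_2\beta)n$ together with $\delta=48\varepsilon$ and $C_2\geq 48/10000$ keeps $b^*<k$. One then checks that $g_o(t,n)+\beta/32$ exceeds the Woodall threshold $w(b^*,k)$: this is immediate when $b^*<(k+3)/2$ since $g_o(t,n)\geq\lfloor n^2/4\rfloor/\binom{n}{2}$, and in the range $b^*\geq(k+3)/2$ the slack $\beta/32$ dominates the deviation between $w(b^*,k)$ and $g_o(t,n)$ (writing $b^*=(1-\alpha')k$ and $t=(1-\alpha)n$, the deviation is approximately $2(\alpha-\alpha')(1-\alpha-\alpha')$, with $\alpha-\alpha'=O(\delta+1/k)$). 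Thus \Cref{thm:woodall} applied to $S$ yields an odd cycle of length $b^*$, and the second bullet of \Cref{lem:key} then delivers a cycle of length $t$ in $G_0$, since $t$ lies in the interval $[(b^*-1)C_1\log n/(2\log(1/\varepsilon)),\,(1-\delta)b^*n/k]$ by construction together with $t\geq C_1\log n/\log(1/\beta)$ and the estimate $\log(1/\varepsilon)\geq\log(1/\beta)$.

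Suppose now that $t$ is even. Set $b^*$ to be the smallest odd integer with $b^*+1\geq\lceil tk/((1-\delta)n)\rceil$ and $b^*\geq 1$; the goal is an odd path of length $b^*$ in $S$. Three sub-regimes arise: if $t<\gamma n$ then $\gamma\leq 2(1-48\varepsilon)/k$ forces $b^*=1$, and any edge of $S$ (which exists since $e(S)\geq(\beta/32)\binom{k}{2}>0$) is such a path; if $\gamma n\leq t<(n+3)/2$ then $g_e^\gamma(t,n)\binom{n}{2}=\lfloor n(t-1)/2\rfloor+1$ and the density of $S$ exceeds the Erd\H{o}s--Gallai threshold for paths of length $b^*$, so \Cref{thm:ErdosGallai} supplies the desired path directly; if $t\geq(n+3)/2$ the Woodall analysis from the odd case (applied to $b^*+1$) gives a cycle of length at least $b^*+1$ in $S$, and a sub-path of length $b^*$ extracted from that cycle completes the construction. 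In all three regimes the first bullet of \Cref{lem:key} produces cycles of every even length in $[C_1\log n/\log(1/\varepsilon),\,(1-\delta)(b^*+1)n/k]$, which contains the even $t$ by construction.

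The main technical obstacle is the numerical bookkeeping: for each sub-case one must check that the density $g^\gamma(t,n)+\beta/32$ is enough to invoke the relevant extremal theorem and produce a structure of length $b^*$, and that the interval returned by \Cref{lem:key} actually captures the prescribed $t$. The slack $\beta/32$ in $e(S)$, the choice $\delta=48\varepsilon=48\beta/10000$, and the bound $C_2\geq 48/10000$ are calibrated to absorb the losses from rounding $b^*$ up to an odd integer, from the factor $(1-\delta)^{-1}$, and from the requirement $b^*<k$; this parameter matching, rather than any single conceptual difficulty, is the crux.
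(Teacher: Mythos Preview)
Your approach is essentially the paper's: split by the parity and range of $t$, apply Woodall (\Cref{thm:woodall}) or Erd\H{o}s--Gallai (\Cref{thm:ErdosGallai}) to $S$ to obtain an odd cycle or odd path of a suitable length, and then invoke the Key Lemma. One minor correction to your bookkeeping: the justification you give for the lower end of the Key Lemma interval in the odd case --- that $t\ge C_1\log n/\log(1/\beta)$ together with $\log(1/\varepsilon)\ge\log(1/\beta)$ --- only yields $t\ge C_1\log n/\log(1/\varepsilon)$, which is the required bound when $b^*=3$; for $b^*\ge 5$ the correct reason is that $b^*\le k$ is bounded while $b^*\ge 5$ already forces $t>3(1-\delta)n/k=\Theta(n)$, which dominates $(b^*-1)C_1\log n/(2\log(1/\varepsilon))=O(\log n)$.
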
  

\begin{proof}
	We split the proof into four cases by the parity and the value of $t$.
	Throughout all following cases we use the facts that $1 - C_2\beta \leq 1 - \delta$ and that $\varepsilon<\beta$.
	
	\textbf{Case 1:} $t$ is even and $t < \gamma n$.
	In this case we have $g^\gamma_e(t,n) = 0$, and in particular $e(S) \geq \tfrac \beta{32} \binom k2$.
	By \Cref{thm:ErdosGallai} we get that $S$ contains a path of length at least $\tfrac \beta{32}\cdot k > 1$ and hence, by \Cref{lem:key}, $G_0$ contains a cycle of length $t$. 
	
	\textbf{Case 2:} $t$ is even and $\gamma n \leq t < \tfrac12(n+3)$.
	In this case we have $g^\gamma_e(t,n) = \tfrac{\floor{\tfrac12 n(t-1)} + 1}{\binom n2}$, and in particular $e(S) \geq \left(\tfrac{\floor{\frac12 n(t-1)} + 1}{\binom n2} + \tfrac\beta{32} \right)\binom k2 \geq \left(\tfrac tn + \tfrac\beta{50} \right) \binom k2$.
	By \Cref{thm:ErdosGallai} we get that $S$ contains a path of length at least $\left(\tfrac tn + \tfrac\beta{50} \right)k$ and hence, by \Cref{lem:key}, and since $t< \left(\frac tn+\frac\beta{50} \right)(1-\delta) n$, $G_0$ contains a cycle of length $t$.
	
	\textbf{Case 3:} $t$ is odd and $t < \tfrac12(n+3)$.
	In this case we have $g_o(t,n) = \tfrac{\floor{\tfrac14 n^2} + 1}{\binom n2}$, and in particular $e(S) \geq \left(\tfrac{\floor{\tfrac14 n^2} +1}{\binom n2} + \tfrac \beta{32} \right)\binom k2>\left(\frac 12+\frac \beta{32}\right)\binom k2$.
	By \Cref{re:gtn} and \Cref{thm:woodall} we get that $S$ contains cycles of all lengths up to
	$\left(\frac 12+\frac \beta{32}\right)k>\left(\tfrac {n+3}{2n} + \tfrac \beta{50} \right)k>\left(\tfrac tn + \tfrac \beta{50} \right)k$ 
	and hence, by \Cref{lem:key}, $G_0$ contains a cycle of length $t$. 
	
	\textbf{Case 4:} $\tfrac12(n+3) \leq t \leq (1-C_2\beta)n$.
	In this case we have $g^{\gamma}(t,n) = \tfrac{\binom{t-1}2 + \binom{n-t+2}2 + 1}{\binom n2}$, and thus $e(S) \geq \left(\frac{\binom{t-1}2 + \binom{n-t+2}2 +1}{\binom n2} + \frac \beta{32} \right)\binom k2\geq \frac tn(1+\frac {3\delta}{1-\delta})\binom k2$.
	By \Cref{re:gtn} and \Cref{thm:woodall} we get that $S$ contains cycles of all lengths up to $\tfrac tn\left(1 + \tfrac {3\delta}{1-\delta} \right)k$ and in particular a path of length, say, $\left\lceil\tfrac tn\left(1 + \tfrac {2\delta}{1-\delta} \right)k\right\rceil$.
	By \Cref{lem:key}, since $t\leq \tfrac tn\left(1 + \tfrac {2\delta}{1-\delta} \right)(1-\delta) n-1$, we get that $G_0$ contains a cycle of length $t$, where if $t$ is odd then we look at the cycle in $S$ and if $t$ is even then we look at the path.
\end{proof}

\begin{remark}\label{re:gmonotonicity}
	Given $\gamma$, note that the function $g^\gamma(t,n)$ is monotone in the following sense.
	For any $0<t<\frac14(n+3)$ we have $g^\gamma(2t+1,n)\geq g^\gamma(2t,n)$, $g^\gamma(2t+1,n)\geq g^\gamma(2t-1,n)$, and $g^\gamma(2t+2,n)\geq g^\gamma(2t,n)$.
	In addition, if $t\geq\frac 12(n+3)$ then $g^\gamma(t+1,n)\geq g^\gamma(t,n)$.
	Consequently, under the assumptions of \Cref{cor:key}, if $t$ is odd then $G$ contains \textbf{all} cycles of lengths between $\frac {C_1}{\log(1/\beta)}\log n$ and $t$, and if $t$ is even  then $G$ contains \textbf{all even} cycles of lengths between $\frac {C_1}{\log(1/\beta)}\log n$ and $t$.
	In addition, if $t\geq\frac 12(n+3)$ then $G$ contains \textbf{all} cycles of lengths between $\frac {C_1}{\log(1/\beta)}\log n$ and $t$ (regardless of the parity of $t$).
\end{remark}

We next show that $p$-regular pairs of subsets in our graph with non-negligible $p$-density satisfy the $\varepsilon$-property.
Then by the Key Lemma we can deduce the main theorem. 

\begin{claim}\label{cl:OneEdge}
	Let $n$ be an integer,  $\varepsilon>0$, 
	$\varepsilon<\rho<\frac 12$.
	Let $G_0$ be a graph on $n$ vertices and let $V_1, V_2 \subseteq V(G_0)$ be two subsets of vertices satisfying: $V_1 \cap V_2 = \emptyset$, $|V_1|, |V_2| \in \{\floor{m},\ceil{m} \}$ for some $m$, and the pair $(V_1, V_2)$ is $(\varepsilon, p)$-regular in $G_0$ with $ d_{G_0,p}(V_1, V_2) \geq \rho$, for some $0 < p \coloneqq p(n) \leq 1$. 
	Then the pair $(V_1,V_2)$ satisfies the $\varepsilon$-property in $G_0$.
\end{claim}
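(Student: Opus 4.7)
The plan is a direct contradiction argument straight from the definition of $(\varepsilon,p)$-regularity. Suppose, for contradiction, that there exist $U_1\subseteq V_1$ and $U_2\subseteq V_2$ with $|U_1|,|U_2|\geq \varepsilon m$ and $e_{G_0}(U_1,U_2)=0$, so in particular $d_{G_0,p}(U_1,U_2)=0$.

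Before quoting regularity, I would check that $(U_1,U_2)$ is a legitimate witness, i.e.\ that $|U_i|\geq\varepsilon|V_i|$. Since $|V_i|\in\{\lfloor m\rfloor,\lceil m\rceil\}$, in the case $|V_i|=\lfloor m\rfloor$ we have $\varepsilon|V_i|\leq\varepsilon m\leq |U_i|$ immediately; in the remaining case $|V_i|=\lceil m\rceil$, the integrality of $|U_i|$ together with $m$ being large (our overall setting) gives $|U_i|\geq\lceil\varepsilon m\rceil\geq\varepsilon\lceil m\rceil=\varepsilon|V_i|$, so no real loss occurs. Then the $(\varepsilon,p)$-regularity inequality applied to $(U_1,U_2)$ yields
\[\bigl|\,d_{G_0,p}(V_1,V_2)-d_{G_0,p}(U_1,U_2)\,\bigr|\leq \varepsilon.\]
Combining $d_{G_0,p}(U_1,U_2)=0$ with the hypothesis $d_{G_0,p}(V_1,V_2)\geq\rho$ gives $\rho\leq\varepsilon$, contradicting the standing assumption $\varepsilon<\rho$. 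Consequently, no such pair $U_1,U_2$ can exist, which is precisely the $\varepsilon$-property for $(V_1,V_2)$.

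I anticipate no serious obstacle: the statement is essentially a one-line consequence of the regularity definition, with the sole nuisance being the mild discrepancy between the threshold $\varepsilon m$ in the definition of the $\varepsilon$-property and the threshold $\varepsilon|V_i|$ in the definition of $(\varepsilon,p)$-regularity. Since $|V_i|$ differs from $m$ by at most one, this is comfortably absorbed for $n$ (hence $m$) large, as addressed in the paragraph above.
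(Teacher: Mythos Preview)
Your proof is correct and follows essentially the same approach as the paper: apply the $(\varepsilon,p)$-regularity inequality to the subsets $U_1,U_2$ and use $\rho>\varepsilon$ to force $e(U_1,U_2)>0$. The only differences are cosmetic --- you phrase it as a contradiction while the paper argues directly that $e(U_1,U_2)\geq(\rho-\varepsilon)p|U_1||U_2|>0$, and you add the careful check that $|U_i|\geq\varepsilon m$ implies $|U_i|\geq\varepsilon|V_i|$, which the paper silently takes for granted.
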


\begin{proof}
	Let $U_1\subseteq V_1$ and $U_2\subseteq V_2$ be such that $|U_1|, |U_2| \geq \varepsilon m$.
	By regularity we have\\ $\left|d_{G_0,p}(V_1,V_2) - d_{G_0,p}(U_1,U_2) \right| \leq \varepsilon$.
	Combining it with the assumption $d_{G_0,p}(V_1, V_2) \geq \rho$, we have that
	\[e(U_1,U_2)\geq (\rho-\varepsilon)p|U_1||U_2|>0.\qedhere \]
\end{proof}

Using \Cref{cor:key} we can immediately prove our main theorem.

\begin{proof}[Proof of Theorem~\ref{thm:main}]
	Let $\varepsilon = \frac {\beta}{10000}$, $\rho = 10\varepsilon$, and $k_0 = \frac{2}{\varepsilon^2}$.
	Let $\eta_0\coloneqq \eta_0(\varepsilon,k_0)>0$, $K_0\coloneqq K_0(\varepsilon,k_0)\geq k_0$, and $k \in [k_0, K_0]$ be as given by the Sparse Regularity Lemma (Theorem~\ref{thm:srl}) applied with $\varepsilon$ and $k_0$.
	Let $\eta\coloneqq \min \{\eta_0,\tfrac 1{3k_0}\}$ and let $\gamma = \tfrac{2(1-48\varepsilon)}k$.
	Recall that $G$ is a $(p,\eta)$-upper-uniform graph for some $0 < p \coloneqq p(n) \leq 1$, with $e(G) \geq (1-\beta/2)p\binom n2$.
	Let $G'$ be a graph obtained from $G$ by keeping at least $(g^\gamma(t,n)+\beta)e(G)$ edges, and note that $G'$ is also $(p,\eta)$-upper-uniform.
	Let $\Pi=(V_1,\dots,V_k)$ be an $(\varepsilon,p)$-regular partition of $G'$ guaranteed by the Sparse Regularity Lemma for the relevant parameters, for some $k_0 \leq k\leq K_0$.
	Let $R\coloneqq R(G',\Pi, \varepsilon, \rho, p)$ be the reduced graph on $k$ vertices with parameters $\rho,\varepsilon, p$ and $k$, as in \Cref{def:RGraph}.
	By \Cref{cl:OneEdge}, if $\{i,j\}$ is an edge in $R$, then the pair $(V_i,V_j)$ satisfies the $\varepsilon$-property in $G'$.
	Hence, the reduced graph $R$ is a subgraph of the $\varepsilon$-graph $S\coloneqq S(G',\Pi, \varepsilon)$, as defined in \Cref{def:SGraph}.
	In particular $e(S)\geq e(R)$, and every path or cycle contained in $R$ is also contained in $S$. 
	 
	Let $C_1 > 0$ be the constant from \Cref{lem:key}, and let $C_2>0$ be the constant from \Cref{cor:key}.
	Let $\tfrac{C_1}{\log(1/\beta)}\log n\leq t\leq (1-C_2\beta)n$.
	By Lemma~\ref{lem:EdgesRGraph} we have that $e(R)\geq(g^\gamma(t, n)+\tau)\binom k2$ (where $\tau=\frac \beta{32}$), and thus $e(S)\geq(g^\gamma(t, n)+\tau)\binom k2$. 
	Applying \Cref{cor:key}, we get a cycle of length $t$ in $G$.
\end{proof}

Applying \Cref{re:gmonotonicity} to \Cref{thm:main}, note that if $t$ is odd then $G$ contains \textbf{all} cycles of lengths between $\frac {C_1\log n}{\log(1/\beta)}$ and $t$, and if $t$ is even  then $G$ contains \textbf{all even} cycles of lengths between $\frac {C_1\log n}{\log(1/\beta)}$ and $t$. In addition, if $t>\frac 12(n+3)$ then $G$ contains \textbf{all} cycles of lengths between $\frac {C_1\log n}{\log(1/\beta)}$ and $t$ (regardless of the parity of $t$).

\section{Proof of the Key Lemma}\label{sec:proofKey}
In this section we prove the Key Lemma (\Cref{lem:key}) using several claims and results regarding tree embeddings in expander graphs.
The main idea is to show that every two vertices connected by an edge in the reduced graph represent a pair of clusters in the original graph that has ``good expansion" properties (\Cref{sec:expander}).
Then, we show that the graph induced by any pair of such clusters contains a very specific tree (\Cref{sec:tree}), which will later be used to embed the desired cycle (\Cref{sec:PfKey}).

\subsection{Expander graphs}\label{sec:expander}

\begin{definition}\label{def:expander}
	A graph $G = (V,E)$ is called a \emph{$(B, \ell)$-expander} if for every $X\subseteq V$ with $|X| \leq B$ we have $|\Gamma_G(X)| \geq \ell |X|$.
\end{definition}

For the proofs in this section we also need a somewhat more specific definition of expander graphs for the special case of bipartite graphs.

\begin{definition}\label{def:bipexpander}
	A bipartite graph $G=(V_1\cup V_2,E)$ is called a \emph{$(B,\ell)$-bipartite-expander} if for every $X\subseteq V_i$, $1\leq |X|\leq B$, we have $|\Gamma_G(X)|\geq \ell|X|$.
\end{definition}

\begin{remark}\label{rem:expander}
	If a bipartite graph $G$ is an $(A, \ell+1)$-bipartite-expander, then it is a $\left(2A, \tfrac12\ell \right)$-expander.
\end{remark}

\begin{proposition}\label{prop:expander}
	Let $\varepsilon > 0$ and let $a, b > 0$ satisfy $(2b + 2)(1 - \varepsilon - ab) > 1$ and $(2b + 2)\varepsilon \geq 1$. 
	Let $G$ be a bipartite graph with parts $V_1, V_2$ with $|V_1|, |V_2| \geq (2b + 2)\varepsilon m$ for some integer $m$, and assume that every two subsets $V'_1, \subseteq V_1$, $V'_2\subseteq V_2$ with $|V'_1|, |V'_2| \geq \varepsilon m$ span at least one edge in $G$, i.e., $e(G[V'_1, V'_2]) > 0$.
	Then there exist $U_1\subseteq V_1$ and $U_2\subseteq V_2$ with $|U_1| \geq (1-\varepsilon)|V_1|$ and $|U_2| \geq (1-\varepsilon)|V_2|$ such that the bipartite graph $G[U_1, U_2]$ is an $(ax, b)$-bipartite-expander, where $x = \min(|V_1|, |V_2|)$.
\end{proposition}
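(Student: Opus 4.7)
The plan is to prove this by an iterative deletion argument. Starting with $U_1 = V_1$ and $U_2 = V_2$, while there exists a \emph{bad set} $X\subseteq U_i$ with $1\le|X|\le ax$ and $|\Gamma_{G[U_1,U_2]}(X)\cap U_{3-i}|<b|X|$, I delete $X$ from $U_i$. The objective is to show the process terminates with $|V_i\setminus U_i|\le\varepsilon|V_i|$ on both sides, at which point $G[U_1,U_2]$ is by construction an $(ax,b)$-bipartite-expander.

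Write $A_i := V_i\setminus U_i$ for the cumulative deletions on side $i$, and let $X_{(1)},\ldots,X_{(s)}$ be the bad sets removed from side $1$ in order, with $Y_{(j)}$ their neighborhoods in $U_2$ at the time of removal, so $|Y_{(j)}|<b|X_{(j)}|$. Any vertex of $V_2\setminus A_2$ lies in $U_2^{(t)}$ at every step $t$, so whenever it is in $\Gamma_G(X_{(j)})$ it is in $Y_{(j)}$. Summing yields the key invariant
\[|\Gamma_G(A_1)\cap(V_2\setminus A_2)| \;\le\; \sum_j |Y_{(j)}| \;<\; b|A_1|,\]
and symmetrically for $A_2$.

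Suppose for contradiction that the threshold is first crossed on some side at step $T^*$, WLOG side $1$, so $\varepsilon|V_1|<|A_1^{(T^*)}|\le\varepsilon|V_1|+ax$ and $|A_2^{(T^*)}|\le\varepsilon|V_2|$. The hypothesis $(2b+2)\varepsilon\ge1$ together with $|V_1|\ge(2b+2)\varepsilon m$ forces $|A_1^{(T^*)}|>\varepsilon|V_1|\ge\varepsilon m$, so the $\varepsilon$-property applies to the pair consisting of $A_1^{(T^*)}$ and $(V_2\setminus A_2^{(T^*)})\setminus \Gamma_G(A_1^{(T^*)})$, forcing the latter to have size $<\varepsilon m$, hence
\[|\Gamma_G(A_1^{(T^*)})\cap(V_2\setminus A_2^{(T^*)})| \;>\; (1-\varepsilon)|V_2|-\varepsilon m.\]
Combining with the invariant and substituting $|A_1^{(T^*)}|\le\varepsilon|V_1|+ax$ gives $(1-\varepsilon)|V_2|-\varepsilon m < b(\varepsilon|V_1|+ax)$.

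From here I would split on whether $|V_1|\le|V_2|$ (so $x=|V_1|$, and the above reduces after using $|V_1|\le|V_2|$ to $[1-\varepsilon(1+b)-ab]|V_2|<\varepsilon m$) or $|V_2|<|V_1|$, in which case the symmetric argument applied to a first crossing on side $2$ must be carried out instead, giving a parallel inequality in $|V_1|$ with $x=|V_2|$. In each case, combining with $|V_i|\ge(2b+2)\varepsilon m$ and the hypothesis $(2b+2)(1-\varepsilon-ab)>1$ should yield a contradiction. The main obstacle is precisely this last arithmetic step: the term $\varepsilon b$ introduced by the $b\cdot ax$ contribution does not directly cancel against the hypothesis $(2b+2)(1-\varepsilon-ab)>1$, so one must exploit the strictness of the iterative defect bound $|\Gamma(X)|<b|X|$ and the strictness of the $\varepsilon$-property alongside the constraint $(2b+2)\varepsilon\ge1$ to push through the borderline configurations, and in particular to ensure that the argument handles the asymmetric case where the crossing occurs on the larger side.
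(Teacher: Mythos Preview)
Your deletion scheme and the invariant $|\Gamma_G(A_1)\cap(V_2\setminus A_2)|<b|A_1|$ are both correct, but the arithmetic obstacle you flag at the end is real and cannot be closed under the stated hypotheses. Carrying your inequality through in the case $x=|V_1|\le|V_2|$ gives $[1-\varepsilon(1+b)-ab]\,|V_2|<\varepsilon m$, and combining with $|V_2|\ge(2b+2)\varepsilon m$ you would need $(2b+2)\bigl(1-\varepsilon(1+b)-ab\bigr)\ge 1$ to reach a contradiction. The hypothesis only provides $(2b+2)(1-\varepsilon-ab)>1$, which is strictly weaker by a $b\varepsilon$ term, and nothing in your setup (strictness of the defect bound, strictness of the $\varepsilon$-property, the constraint $(2b+2)\varepsilon\ge1$) recovers that loss. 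The source of the extra $b\varepsilon$ is precisely that you allow deleted bad sets to have size up to $ax$, so at the first crossing $|A_1|$ is only bounded by $\varepsilon|V_1|+ax$, and the $b\cdot\varepsilon|V_1|$ term survives.

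The paper avoids this by a two-stage argument. First it deletes only bad sets of size at most $\varepsilon m$; then at the first crossing one has $\varepsilon m\le|A_i|\le 2\varepsilon m$, so the invariant gives $|\Gamma(A_i)\cap V_j^{\text{current}}|<2b\varepsilon m$, and the complement in $V_j^{\text{current}}$ has size at least $(2b+2)\varepsilon m-\varepsilon m-2b\varepsilon m=\varepsilon m$, yielding the contradiction cleanly from $|V_j|\ge(2b+2)\varepsilon m$ alone. This produces $U_1,U_2$ on which $G$ is already an $(\varepsilon m,b)$-bipartite-expander. The second stage then shows this is in fact an $(ax,b)$-bipartite-expander: any hypothetical bad set $W$ with $\varepsilon m<|W|\le ax$ satisfies $|U_j\setminus\Gamma(W)|>(1-\varepsilon)x-abx\ge(2b+2)(1-\varepsilon-ab)\varepsilon m>\varepsilon m$, and now the $\varepsilon$-property applies directly to the pair $(W,\,U_j\setminus\Gamma(W))$ since $|W|>\varepsilon m$. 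The point is that the two hypotheses are used in separate stages, and decoupling them is exactly what eliminates the spurious $b\varepsilon$ term your one-step approach incurs.
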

  
\begin{proof}
	If every subset of $X_i\subseteq V_i$ of size at most $ax$ satisfies $|\Gamma_G(X_i)| \geq b |X_i|$ then we are done by setting $U_1 = V_1$ and $U_2 = V_2$.
	Otherwise, there are subsets violating the expansion condition.
	We iteratively remove such subsets of size at most $\varepsilon m$, one by one, to create an $(\varepsilon m,b)$-bipartite-expander. We then show that the expander we have created is, in fact, an $(ax,b)$-bipartite-expander.
	More formally, we define $V_1^0 = V_1$, $V_2^0 = V_2$ and $W_1^0 = W_2^0 = \emptyset$.
	Let $r\in \mathbb N\cup \{0\}$.
	If for $1\leq i\neq j \leq 2$, there exists $W\subset V_i^r$ with $|W| \leq \varepsilon m$ and $|\Gamma(W)\cap V_j^r| < b |W|$, then we define $V_i^{r+1} = V_i^r\setminus W$, $V_j^{r+1} = V_j$, and $W_i^{r+1} = W_i^r \cup W$, $W_j^{r+1} = W_j^r$.
	If at some point $r_0$ there are no more subsets violating the $(\varepsilon m,b)$-expansion condition in $V_1^{r_0}, V_2^{r_0}$, and we have $|W_1^{r_0}|, |W_2^{r_0}| < \varepsilon m$, then we define $U_1 = V_1^{r_0}$, $U_2 = V_2^{r_0}$, which means that the graph $G[U_1, U_2]$ is an $(\varepsilon m, b)$-bipartite-expander.
	Otherwise, for some $r_0$ we have, for the first time in this process, $|W_i^{r_0}| \geq \varepsilon m$ for some $i\in \{1,2\}$.
	Since in each step $r$ of the process we add to one of $W_1^{r-1}, W_2^{r-1}$ at most $\varepsilon m$ vertices, it follows that $\varepsilon m \leq |W_i^{r_0}| \leq 2\varepsilon m$.
	By the definition of $W_i^{r_0}$ we get $|\Gamma(W_i^{r_0})\cap V_j^{r_0}| < b |W_i^{r_0}|$, where $i \neq j \in \{1,2\}$.
	By the choice of $r_0$ we know that $|W_j^{r_0}| < \varepsilon m$ ($j\neq i$), and thus
	\[|V_j^{r_0} \setminus \Gamma(W_i^{r_0})| > |V_j| - |W_j^{r_0}| - b|W_i^{r_0}| \geq (2b + 2)\varepsilon m - \varepsilon m - 2b\varepsilon m \geq \varepsilon m. \]
	It follows from our assumption that $e_G(W_i^{r_0}, V_j\setminus \Gamma(W_i^{r_0})) > 0$, which is a contradiction.
	Hence in the end of this vertex-removal process we are left with $U_1 \subseteq V_1$ and $U_2 \subseteq V_2$ of sizes $|U_1| \geq (1-\varepsilon)|V_1|$ and $|U_2| \geq (1-\varepsilon)|V_2|$ such that the bipartite graph $G[U_1, U_2]$ is an $(\varepsilon m, b)$-bipartite-expander.
	
	We conclude by proving that $G[U_1, U_2]$ is in fact an $(ax, b)$-bipartite-expander.
	Assume, for contradiction, that for $1\leq i\neq j\leq 2$ there exists $W\subseteq U_i$ with $\varepsilon m < |W| \leq ax$ and such that $|\Gamma(W)\cap U_j| < b |W|$.
	Recall that $x = \min\{|V_1|, |V_2|\} \geq (2b + 2)\varepsilon m$, so it follows that
	\[ |U_j \setminus \Gamma(W)| > (1-\varepsilon)x - abx \geq (1 - \varepsilon - ab)x \geq (2b+2)(1 - \varepsilon - ab)\varepsilon m > \varepsilon m, \]
	and by the assumption we get $e(W, U_j\setminus \Gamma(W)) > 0$, which is a contradiction.
\end{proof}

\begin{corollary}\label{cor:expander}
	Let $G$ be a bipartite graph with parts $V_1, V_2$, let $m$ be some integer, let $0 < \varepsilon < \tfrac1{85}$, 
	and denote $x = \min(|V_1|, |V_2|)$.
	Assume that every two  subsets $V'_1, \subseteq V_1$, $V'_2\subseteq V_2$ with $|V'_1|, |V'_2| \geq \varepsilon m$ span at least one edge in $G$, i.e., $e(G[V'_1, V'_2]) > 0$.
	Then there exist $U_1, W_1\subseteq V_1$ and $U_2, W_2\subseteq V_2$ with $|U_1|, |W_1| \geq (1-\varepsilon)|V_1|$ and $|U_2|, |W_2| \geq (1-\varepsilon)|V_2|$ such that
	\begin{enumerate}
		\item If $|V_1|, |V_2| \geq \tfrac12m - 1$ then the bipartite graph $G[W_1, W_2]$ is a $(6\varepsilon x, \tfrac1{8\varepsilon}+1)$-bipartite-expander, and hence a $(12\varepsilon x, \tfrac1{16\varepsilon})$-expander.
		\item If $|V_1|, |V_2| \geq 20\varepsilon m$ then the bipartite graph $G[U_1, U_2]$ is a $(\tfrac1{10} x, 9)$-bipartite-expander, and hence a $(\tfrac15 x, 4)$-expander.
	\end{enumerate}
\end{corollary}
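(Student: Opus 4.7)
My plan is to apply Proposition~\ref{prop:expander} twice, each time with a specific choice of $(a,b)$ tailored to the target expansion, and then to convert the resulting bipartite expansion into ordinary expansion via Remark~\ref{rem:expander}. The whole corollary is, in effect, two instantiations of Proposition~\ref{prop:expander} with different parameter regimes: a ``small $a$, large $b$'' regime for the almost-spanning, highly-expanding subgraph of Part 1, and a ``constant $a$, constant $b$'' regime for the mildly-expanding but $\Theta(x)$-size subgraph of Part 2.

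For Part 1 I would choose $a = 6\varepsilon$ and $b = \tfrac{1}{8\varepsilon}+1$, so that Proposition~\ref{prop:expander} directly outputs the desired $(6\varepsilon x,\tfrac{1}{8\varepsilon}+1)$-bipartite-expander, which I then denote $G[W_1,W_2]$. The two hypotheses of Proposition~\ref{prop:expander} unfold to $(2b+2)\varepsilon = \tfrac14+4\varepsilon$, so the size hypothesis $|V_i|\geq (2b+2)\varepsilon m$ follows from $|V_i|\geq \tfrac{m}{2}-1$ for $\varepsilon<1/85$ and $m$ large, and to $(2b+2)(1-\varepsilon-ab) = \bigl(\tfrac{1}{4\varepsilon}+4\bigr)\bigl(\tfrac14 - 7\varepsilon\bigr) > 1$, which has ample slack since $\tfrac{1}{4\varepsilon}>\tfrac{85}{4}$. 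Remark~\ref{rem:expander}, applied with $A=6\varepsilon x$ and $\ell = \tfrac{1}{8\varepsilon}$ (so $\ell+1 = \tfrac{1}{8\varepsilon}+1$), then promotes this bipartite expansion to a $(12\varepsilon x, \tfrac{1}{16\varepsilon})$-expander.

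For Part 2 I would instead take $a = \tfrac{1}{10}$ and $b = 9$, targeting the $(\tfrac{x}{10},9)$-bipartite-expander. Here $(2b+2)\varepsilon = 20\varepsilon$ matches the size hypothesis $|V_i|\geq 20\varepsilon m$ exactly, and $(2b+2)(1-\varepsilon-ab) = 20\bigl(\tfrac{1}{10}-\varepsilon\bigr) = 2-20\varepsilon > 1$ whenever $\varepsilon<\tfrac{1}{20}$, which is implied by $\varepsilon<1/85$. Proposition~\ref{prop:expander} produces $U_1, U_2$ with $|U_i|\geq (1-\varepsilon)|V_i|$ and $G[U_1,U_2]$ a $(\tfrac{x}{10},9)$-bipartite-expander, and Remark~\ref{rem:expander} with $A=\tfrac{x}{10}$ and $\ell=8$ converts this to a $(\tfrac{x}{5},4)$-expander.

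There is no real obstacle beyond the arithmetic verification that both hypotheses of Proposition~\ref{prop:expander} hold for each of the two $(a,b)$ choices in the regime $\varepsilon<1/85$. In both parts the conditions are comfortably slack, and no new ideas beyond Proposition~\ref{prop:expander} and Remark~\ref{rem:expander} are needed; the only genuine content is the clever choice of $(a,b)$ matching the two different kinds of expansion that will be needed downstream in the Key Lemma.
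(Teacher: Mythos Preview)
Your proposal is correct and is exactly the (implicit) derivation the paper intends: the corollary is stated without proof, to be read off from Proposition~\ref{prop:expander} with precisely the two choices $(a,b)=(6\varepsilon,\tfrac{1}{8\varepsilon}+1)$ and $(a,b)=(\tfrac{1}{10},9)$ that you give, followed by Remark~\ref{rem:expander}. One caveat worth noting: the stated hypothesis $(2b+2)\varepsilon \geq 1$ of Proposition~\ref{prop:expander} is not satisfied by either choice when $\varepsilon<\tfrac{1}{85}$ (it becomes $\tfrac14+4\varepsilon<1$ and $20\varepsilon<1$, respectively), but a look at that proposition's proof shows this condition is never actually invoked and appears to be extraneous.
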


\subsection{Tree embeddings}\label{sec:tree}

We start by defining the following trees, playing a key role in our proofs.
\begin{definition}\label{def:Trhell}
	Let $T^{(r,h)}$ be the $r$-ary tree of depth $h$ (that is, the tree where each vertex, but a leaf, has $r$ children, and the distance, in edges, between the root and every leaf is exactly $h$).
	Let $T^{(r,h)}_\ell$ be the tree consisting of two disjoint copies of $T^{(r,h)}$ and a path of length $\ell$ connecting their roots.
\end{definition}

\begin{remark}\label{rmk:Trhell}
	Note that a longest path in $T^{(r,h)}_\ell$ is of length $\ell + 2h$.
	Furthermore, the tree $T^{(r,h)}_\ell$ has exactly $\ell - 1 + 2\cdot\frac{r^{h+1} - 1}{r - 1}$ vertices.
\end{remark}

The main ingredients in the proof of \Cref{lem:key} are the following claims regarding tree embeddings in bipartite-expander graphs.

\begin{proposition}\label{prop:TreeEmbd}
	Let $G$ be a bipartite graph with parts $V_1, V_2$ with $|V_1|, |V_2| \in \{\floor{m}, \ceil{m} \}$ for some positive number $m$.
	Let $0 < \varepsilon < \tfrac1{85}$ and assume that the pair $(V_1, V_2)$ satisfies the $\varepsilon$-property in $G$.
	Then $G$ contains every tree on at most $6 \varepsilon m$ vertices with maximum degree at most $\tfrac1{16\varepsilon} - 1$.
	In particular, $G$ contains a copy of $T^{(r,h)}_\ell$ where $r =\floor{ \tfrac1{16\varepsilon}} - 2$, $h = \ceil{\frac{\log (\varepsilon m)}{\log r}}$, and any  integer $\ell \in \left[1, 2\varepsilon m\right]$.
\end{proposition}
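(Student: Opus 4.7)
The strategy is to first pass to a subgraph of $G$ with strong expansion using \Cref{cor:expander}, and then apply the Friedman–Pippenger theorem (\Cref{FP}) to do the tree embedding.

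Since $|V_1|,|V_2|\in\{\floor{m},\ceil{m}\}$ trivially implies $|V_1|,|V_2|\geq \tfrac{m}{2}-1$, item~1 of \Cref{cor:expander} is available and yields subsets $W_1\subseteq V_1$, $W_2\subseteq V_2$ of sizes at least $(1-\varepsilon)|V_i|$ such that $H := G[W_1,W_2]$ is a $(12\varepsilon x,\tfrac{1}{16\varepsilon})$-expander, where $x=\min(|V_1|,|V_2|)\geq \floor{m}$. I would then apply \Cref{FP} to $H$ with $d:=\tfrac1{16\varepsilon}-1$. For a tree $T$ on $k\leq 6\varepsilon m$ vertices of maximum degree at most $d$, that theorem requires $|\Gamma_H(X)|\geq (d+1)|X|$ for every $X\subseteq V(H)$ with $|X|\leq 2k-2$. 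The size bound gives $2k-2 \leq 12\varepsilon m - 2 \leq 12\varepsilon x$ (using $\floor{m}\geq m-1$ and $\varepsilon<1/6$), while the expansion guarantee of $H$ provides exactly the factor $\tfrac{1}{16\varepsilon}=d+1$ on precisely that range of set sizes. Hence \Cref{FP} delivers a copy of $T$ inside $H\subseteq G$, and the first assertion is proved.

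For the \emph{in particular} conclusion I would take $T=T^{(r,h)}_\ell$ and verify the two hypotheses of the first part. The maximum degree of $T^{(r,h)}_\ell$ is attained at the two roots, each of degree $r+1 \leq \floor{1/(16\varepsilon)}-1 \leq d$, so the degree condition is immediate. For the vertex count, I would combine \Cref{rmk:Trhell} with $\ell\leq 2\varepsilon m$ and the estimate $r^h\leq r\cdot \varepsilon m$ (which follows from $h=\ceil{\log(\varepsilon m)/\log r}$) to check that the total stays within the $6\varepsilon m$ budget; the constraint $\varepsilon<1/85$ ensures $r\geq 4$, so that $r/(r-1)\leq 4/3$, which is the only place where the small absolute bound on $\varepsilon$ actually enters. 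The main obstacle here is purely arithmetic — one has to make sure the ceiling in the definition of $h$ does not blow up $r^h$ by too much, and that the doubling (from the two copies of $T^{(r,h)}$) together with the path still fits inside the allowed vertex count. Apart from this bookkeeping, the argument is a clean composition of \Cref{cor:expander} with the Friedman–Pippenger embedding theorem.
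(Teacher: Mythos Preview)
Your proposal is correct and follows essentially the same route as the paper: apply item~1 of \Cref{cor:expander} to pass to a $(12\varepsilon x,\tfrac{1}{16\varepsilon})$-expander, invoke \Cref{FP} with $d=\tfrac{1}{16\varepsilon}-1$, and then check that $T^{(r,h)}_\ell$ meets the size and degree constraints via \Cref{rmk:Trhell}. One small arithmetic slip: from $\varepsilon<\tfrac{1}{85}$ you only get $\lfloor 1/(16\varepsilon)\rfloor\geq 5$, hence $r\geq 3$ rather than $r\geq 4$; also, the bound $\varepsilon<\tfrac{1}{85}$ is not used solely for the vertex-count estimate but is already a hypothesis of \Cref{cor:expander} itself.
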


\begin{proof}
	By \Cref{cor:expander} there are subsets $U_1, \subseteq V_1$, $U_2\subseteq V_2$ for which the graph $G[U_1, U_2]$ is an $(12\varepsilon m, \tfrac1{16\varepsilon})$-expander.
	By \Cref{FP} we get that $G[U_1, U_2]$ contains a copy of any tree on at most $6\varepsilon m$ vertices with maximum degree at most $\tfrac1{16\varepsilon} - 1$.
	Set $r = \floor{\tfrac1{16\varepsilon}} - 2$, $h = \ceil{\frac{\log (\varepsilon m)}{\log r}}$, and $\ell \in [1, 2\varepsilon m]$.
	By \Cref{rmk:Trhell} the tree $T^{(r,h)}_\ell$ has at most $6\varepsilon m$ vertices and maximum degree at most $\tfrac1{16\varepsilon} - 1$, so in particular $G[U_1, U_2]$ contains a copy of it.
\end{proof}

\begin{proposition}\label{prop:LargeTreeEmbd}
	Let $G$ be a bipartite graph on parts $V_1, V_2$ with $|V_1|, |V_2| \in \{\floor{m}, \ceil{m} \}$ for some positive number $m$.
	Let $0 < \varepsilon < \tfrac1{85}$ and assume that the pair $(V_1, V_2)$ satisfies the $\varepsilon$-property in $G$.
	Then $G[V_1,V_2]$ contains a copy of $T^{(2,h)}_\ell$ for $h = \ceil{ \frac{\log (\varepsilon m)}{\log 2}}$, and any integer $\ell \in \left[1, 2(1 - 48\varepsilon)m\right]$.
	Moreover, if $\ell$ is even then we can embed a copy of $T^{(2,h)}_\ell$ with all leaves in $V_i$ for any $i\in\{1,2 \}$.
\end{proposition}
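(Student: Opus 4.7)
The plan is to decompose $T^{(2,h)}_\ell$ into three pieces—a long path of length $\ell$ joining two vertices $u,v$, and two copies of the binary tree $T^{(2,h)}$ rooted at $u$ and $v$—and to embed each piece using different expansion regimes supplied by \Cref{cor:expander}. Observe first that $T^{(2,h)}$ has $2^{h+1}-1 \leq 4\varepsilon m$ vertices and maximum degree $3$, while the path contributes $\ell - 1 \leq 2(1-48\varepsilon)m - 1$ internal vertices, so the total vertex count is at most $2m - 88\varepsilon m$. Hence there is slack of $\Omega(\varepsilon m)$ vertices in $V_1 \cup V_2$, which is what will allow separate arguments for the path and for the end-trees to coexist.

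First I would apply \Cref{cor:expander} to $(V_1,V_2)$: for large $m$ both hypotheses hold, so I obtain $U_1 \subseteq V_1, U_2 \subseteq V_2$ with $|U_j| \geq (1-\varepsilon)|V_j|$ such that $G[U_1,U_2]$ is a $(\tfrac{1}{5}x, 4)$-expander, where $x = \min(|V_1|,|V_2|)$. Next I would produce in $G[U_1,U_2]$ a path $P$ of length \emph{exactly} $\ell$ whose endpoints $u,v$ have the prescribed parity: both in $V_i$ if $\ell$ is even (with $i$ chosen freely since the roles of $V_1,V_2$ are symmetric), one in each part if $\ell$ is odd. This is done by the standard DFS/Pós\'a approach in expanders: run DFS to obtain an initial long path, then apply rotation-extension moves, using the $(\tfrac{1}{5}x,4)$-expansion to guarantee both that rotations succeed and that the path can be extended or contracted by one edge at a time until the exact length and endpoint parts are realized; the slack $|U_1|+|U_2|-\ell \geq 2(1-\varepsilon)m - 2(1-48\varepsilon)m = 94\varepsilon m$ ensures there is ample room.

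Second, I would attach two disjoint copies of $T^{(2,h)}$ at $u$ and $v$ respectively. Let $V'_j$ be $V_j$ with the interior vertices of $P$ removed; then $|V'_j| \geq |V_j| - \ell \geq 48\varepsilon m - 1$, and crucially, any two subsets of $V'_1, V'_2$ of size $\geq \varepsilon m$ still span an edge, since this is inherited from the $\varepsilon$-property in $G$. Thus the pair $(V'_1,V'_2)$ is a bipartite graph to which (a mild variant of) \Cref{prop:TreeEmbd} and the underlying Friedman--Pippenger machinery of \Cref{cor:expander}(1) apply: within $V'_1 \cup V'_2$, I can find a $(12\varepsilon x', \tfrac{1}{16\varepsilon})$-expanding bipartite subgraph (for $x' = \min(|V'_1|,|V'_2|)$), which by \Cref{FP} contains any tree on at most $6\varepsilon x'$ vertices with maximum degree $\tfrac{1}{16\varepsilon}-1$ rooted at any prescribed vertex, in particular $T^{(2,h)}$ rooted at $u$. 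After removing the vertices of this first copy, the same argument in the further-reduced parts (still of size $\Omega(\varepsilon m)$) produces the second copy of $T^{(2,h)}$ rooted at $v$, disjoint from the first. The final ``moreover'' clause on the parity of leaves is then automatic: when $\ell$ is even, both roots lie in the chosen $V_i$, so the leaves of the two binary trees all lie in $V_i$ or all in $V_{3-i}$ according to the parity of $h$—but since $i$ was free, one recovers either option by swapping the roles of $V_1,V_2$ at the outset.

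The main obstacle will be the second step: producing a path of \emph{exactly} the prescribed length with prescribed endpoint parts, while leaving the graph expanding enough to still embed the two binary trees afterwards. The $(\tfrac{1}{5}x,4)$-expansion is much stronger than what is needed for mere existence of a long path, and the multiplicative gap in expansion is robust under the removal of $O(\varepsilon m)$ vertices; this robustness is what lets me decouple the path-embedding step from the subsequent tree-embedding step, and it is the reason the proof reduces to these two relatively clean expander arguments.
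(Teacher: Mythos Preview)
Your overall decomposition into ``long path plus two rooted binary trees'' is the right picture, but the proposal has two genuine gaps, and the paper's proof is organized precisely to avoid them.

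First, the step ``use rotation--extension to produce a path of \emph{exactly} length $\ell$ with prescribed endpoint parts'' is not justified. Rotation--extension in a $(\tfrac15 x,4)$-expander gives you a long path, perhaps even near-spanning, but it does not give you control over the \emph{exact} length; rotations preserve length, and extensions only increase it. The paper does not attempt this. Instead it finds (via a simple DFS partition, \Cref{cl:LongPath}) a path that is slightly \emph{too long}, of length $\ell-4+q$ with $q=4\lceil\varepsilon m\rceil$, and then uses the first $q$ vertices from each end as a buffer: among the odd-indexed buffer vertices (which lie entirely in one part), at least $\varepsilon m+1$ have a neighbour in the reserved tree-block, so by pigeonhole there is an index $s$ such that truncating at $u_s$ and $v_{q-s}$ yields a path of length exactly $\ell-2$ whose endpoints can each be extended by one edge into the reserved blocks. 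This buffer-plus-pigeonhole trick is the actual mechanism for hitting a prescribed length.

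Second, your tree-embedding step has a rooting problem. You remove the interior of $P$, then apply \Cref{cor:expander} to the leftover pair $(V'_1,V'_2)$ to get an expanding subgraph, and then invoke \Cref{FP} to embed $T^{(2,h)}$ rooted at $u$. But the passage to the expanding subgraph in \Cref{cor:expander} may delete up to an $\varepsilon$-fraction of each side, and nothing prevents $u$ or $v$ from being among the deleted vertices; \Cref{FP} only lets you root at a vertex \emph{of the expander}. The paper sidesteps this by reversing the order: it \emph{first} reserves four small blocks $U_{ij}$ of size $\lceil 21\varepsilon m\rceil$ and extracts expanders $W_{ij}$ inside them, and only \emph{then} finds the long path in $X_1\cup X_2 = (V_1\cup V_2)\setminus\bigcup W_{ij}$. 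The connection is made not by rooting at the path endpoint, but by taking any neighbour $w_1\in W_{21}$ of the chosen buffer vertex and rooting the tree at $w_1$; since $w_1$ lies in the pre-built expander, \Cref{FP} applies cleanly. (Incidentally, your bound $|V'_j|\ge |V_j|-\ell$ is also off: the path alternates sides, so each part loses only about $\ell/2$ vertices, not $\ell$; as written your bound is negative for $\ell$ near $2m$.)

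In short, the missing idea is: reserve the tree-blocks before building the path, and trade ``exact-length path'' for ``slightly-too-long path plus a pigeonhole truncation that simultaneously lands you in the reserved blocks''.
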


\begin{proof}
	Assume first that $\ell$ is odd.
	Let $U_{11}, U_{12} \subseteq V_1$ be disjoint, and $U_{21}, U_{22} \subseteq V_2$ be also disjoint, such that $|U_{ij}| = \ceil{21\varepsilon m}$ for any $i,j \in \{1,2\}$.
	By \Cref{cor:expander} (item $2$) applied separately on $G[U_{11}, U_{21}]$ and on $G[U_{12}, U_{22}]$ we get four subsets $W_{ij}\subseteq U_{ij}$, $i,j\in\{1,2\}$, all of size at least $20\varepsilon m$, such that each of the graphs $G[W_{11}, W_{21}]$ and $G[W_{12}, W_{22}]$ is a $(\tfrac12\varepsilon m, 4)$-expander.
	
	Let $X_1 \subseteq V_1 \setminus (W_{11}\cup W_{12})$ and let $X_2 \subseteq V_2\setminus (W_{21}\cup W_{22})$ be such that $|X_1| = |X_2| = \floor{(1 - 43\varepsilon)m}$
	Let $\ell \in [1, 2(1-48\varepsilon)m]$ be odd, and let $q=4\ceil{\varepsilon m}$.
	We now find a path of length exactly $\ell - 4 + q$.
	We do this using the following claim, implied by a standard DFS-based argument, stated implicitly in \cite{LongPathDFS} and more explicitly in, e.g., \cite{PokMonocPaths}.
	For a more extensive discussion about the DFS (Depth First Search) algorithm in finding paths in expander graphs we refer the reader to \cite{KrivDFS}.
	\begin{claim}\label{cl:LongPath}
		For every graph $G$ there exists a partition of its vertices $V = S \cup T \cup U$ such that $|S| = |T|$, $G$ has no edges between $S$ and $T$, and $U$ spans a path in $G$.
	\end{claim}
	Apply \Cref{cl:LongPath} to the graph $G[X_1, X_2]$.
	Notice that $|U| = |X_1 \cup X_2| - |S| - |T| = 2|X_1| - 2|S|$ and in particular $|U|$ is even.
	$U$ spans a path in $G[X_1, X_2]$, which is a bipartite graph, so we get $|U\cap X_1| = |U\cap X_2|$.
	Assume w.l.o.g.\ that $|S\cap X_1| \geq |S\cap X_2|$, then $|T\cap X_2| \geq |T\cap X_1|$.
	If $|S| = |T| \geq 2\ceil{\varepsilon m} - 1$ then we get $|S\cap X_1|, |T\cap X_2| \geq \varepsilon m$.
	However, we know that $e(S\cap X_1, T\cap X_2) \leq e(S, T) = 0$, contradicting the $\varepsilon$-property of the pair $(V_1, V_2)$ in $G$.
	Hence we get that $|S| = |T| \leq 2\ceil{\varepsilon m} - 2$, which means that $|U| \geq 2\floor{(1 - 43\varepsilon)m} - 4\ceil{\varepsilon m} + 4 \geq 2(1 - 45\varepsilon)m - 2$, and in particular $G[X_1, X_2]$ contains a path of length at least $2(1 - 45\varepsilon)m - 3$.
	Thus, let $P_0$ be a path of length $\ell - 4 + q \leq 2(1 - 45\varepsilon)m - 3$ and denote its endpoints by $u^*\in X_1$ and $v^*\in X_2$. 
	Let $u_1, \ldots, u_q$ be the first $q$ vertices of $P_0$ when moving from $u^*$, that is $u^* = u_1$, and let $v_1, \ldots v_q$ be the first $q$ vertices of $P_0$ when moving from $v^*$, that is $v^* = v_1$.
	Note that the vertices $\{u_1, \ldots, u_q \}$ are distributed equally between $X_1$ and $X_2$, having exactly $2\ceil{\varepsilon m}$ vertices in each set, and similarly the vertices $\{v_1, \ldots, v_q \}$.
	Consider now only the $2\ceil{\varepsilon m}$ vertices with odd indices, i.e., $\{u_1, u_3, \ldots, u_{q-1} \}$ and $\{v_1, v_3, \ldots, v_{q-1} \}$, and note that we have $\{u_1, u_3, \ldots, u_{q-1} \} \in X_1$ and $\{v_1, v_3, \ldots, v_{q-1} \} \in X_2$.
	Hence, by the $\varepsilon$-property of the pair $(V_1, V_2)$ in $G$, at least $\ceil{\varepsilon m} + 1$ of the vertices in $\{u_1, u_3, \ldots, u_{q-1} \}$ have some neighbor in $W_{21}$, and similarly, at least $\varepsilon m + 1$ of the vertices $\{v_1, v_3, \ldots, v_{q-1} \}$ have some neighbor in $W_{12}$.
	By the pigeonhole principle, there exists (an odd) $s\in \{1, \ldots, q-1 \}$ such that $u_s$ is connected to some vertex in $W_{21}$ and $v_{q-s}$ is connected to some vertex in $W_{12}$.
	Denote by $P$ the subpath of $P_0$ with endpoints $u_s$ and $v_{q-s}$, denoted by $u,v$, respectively, and note that it is of length exactly $\ell - 2$.

	Now, let $w_1$ be a neighbor of $u$ in $W_{21}$ and $w_2$ be a neighbor of $v$ in $W_{12}$.
	Recall that by \Cref{FP} there exists a copy of $T^{(2,h)}$ in $G[W_{11}, W_{21}]$, for $h = \ceil{\tfrac{\log (\varepsilon m)}{\log 2}}$, rooted in any predetermined vertex of $W_{21}$.
	Similarly, there exists a copy of $T^{2,h}$ in $G[W_{12}, W_{22}]$, for the same value of $h$, rooted in any predetermined vertex of $W_{12}$.
	Let $T_{w_1}, T_{w_2}$ be these copies of $T^{(2,h)}$ in $G[W_{11}, W_{21}]$ and in $G[W_{12}, W_{22}]$, respectively, rooted in $w_1\in W_{21}$ and in $w_2\in W_{12}$, respectively.
	Joining $T_{w_1}$ and $T_{w_2}$ to $P$, we get a copy of $T^{(2,h)}_\ell$, as required.
	
	If $\ell$ is even then we repeat the same argument, with a minor change.
	Note first that if $\ell$ is even then any embedded copy of $T^{(2,h)}_\ell$ in $G[V_1, V_2]$ has all leaves in either $V_1$ or $V_2$.
	Assume that we wish to embed a copy of $T^{(2,h)}_\ell$ with all leaves in $V_i$ for some $i\in \{1, 2\}$.
	Note further that if $\ell$ is even then $P_0$ is of an even length $\ell - 4 + q$, and hence both of its endpoints $u^*$ and $v^*$ are in $X_j$ for some $j\in \{1,2 \}$.
	Now, we look at $\{u_1, \ldots, u_q \}$ and $\{v_1, \ldots, v_q \}$ and split into two possible cases by the parity of $h$ and by the part in which the endpoints of $P_0$ are contained.
	If $h$ is even and $i\neq j$, or if $h$ is odd and $i=j$, then we consider only vertices of odd indices, i.e., $\{u_1, u_3, \ldots, u_{q-1} \}$ and $\{v_1, v_3, \ldots, v_{q-1} \}$.
	If $h$ is even and $i=j$, or if $h$ is odd and $i\neq j$, then we consider only vertices of even indices, i.e., $\{u_2, u_4, \ldots, u_q \}$ and $\{v_2, v_2, \ldots, v_q \}$.
	For simplicity we assume now that $h$ is even and $j=1$, $i=2$ (in particular $i\neq j$), where all other cases are handled similarly.
	This means that by the pigeon hole principle there exists (an odd) $s\in \{1, \ldots, q-1 \}$ such that $u_s$ is connected to some vertex in $W_{21}$ and $v_{q-s}$ is connected to some vertex in $W_{22}$, and equivalently to the odd $\ell$ case, we embed trees $T_{w_1}$ and $T_{w_2}$, having $w_1 \in W_{21}$ and $w_2 \in W_{22}$. 
\end{proof}

\subsection{Proof of the Key Lemma}\label{sec:PfKey}

We are now ready to prove \Cref{lem:key} using \Cref{prop:TreeEmbd} and \Cref{prop:LargeTreeEmbd}.

\begin{proof}[Proof of \Cref{lem:key}]
	Throughout the proof we denote $m \coloneqq \tfrac nk$.
	Note that $k$ is constant, so $m = \Theta(n)$.
	Recall that $S$ is the $\varepsilon$-graph obtained from $G_0$ with respect to the partition $\Pi=(V_1,\dots,V_k)$, that is, every edge $\{i,j\}\in E(S)$ represents a pair $(V_i,V_j)$ which satisfies the $\varepsilon$-property in $G_0$.
	
	The general idea is to convert a cycle (or a path) from the graph $S$ to a cycle in $G_0$ of the desired length, by using tree embeddings between clusters of $G_0$.
	Assume that $(1,\dots,b)$ is a cycle in $S$ and that $b$ is odd.
	Roughly speaking, we divide the cycle in $S$ into pairs of vertices that are connected with an edge $(2i,2i+1)$. We then embed in each pair of corresponding clusters $(V_{2i},V_{2i+1})$ a tree $T^{(r,h)}_\ell$ with appropriate parameters such that the leaf sets are in different clusters.
	Since each of these leaf sets contains at least $\varepsilon m$ vertices, we can use the $\varepsilon$-property to connect some leaf from the leaves in $V_{2i+i}$ and some leaf from the leaves in $V_{2i+2}$ by an edge.
	This way, we are able to connect different copies of $T^{(r,h)}_\ell$ to a very large tree, containing a copy of $T^{(r,h)}_{\ell^*}$ for an appropriate $\ell^*$, where its leaf sets are in $V_2$ and $V_b$. 
	We then use one vertex $v$ from $V_1$ and connect it to both leaf sets.
	This creates a cycle in $G_0$ of length exactly $t=\ell^*+2h+2$.
	For converting a path in $S$ to an even cycle in $G_0$ we use a similar argument, only this time we split each cluster into two clusters and use both endpoints of the path in $S$ to ``close" the cycle in $G_0$.
	We give the full details below.    

	We start with the first item.
	Suppose that $S$ contains a path of an odd length $b$, where $1 \leq b <k$, and let $t\in [\frac {C_1}{\log (1/\varepsilon)}\log n, (1-\delta)an]$ be even, $a\coloneqq \tfrac {b+1}k$.	
	Assume w.l.o.g.\ that this path is $(1, \ldots, b+1)$, and consider the sequence of corresponding clusters $V_1, \ldots, V_{b+1}$.
	We separate the case where $t$ is even into three parts.
	The first part deals with the case where $t \in [\frac {C_1}{\log (1/\varepsilon)}\log n, 2\varepsilon m]$, the second part deals with the case where $t\in [2\varepsilon m, (1-\delta)an]$ and $b=1$, and the third part deals with all other cases, i.e., $t\in [2\varepsilon m, (1-\delta)an]$ and $b\geq3$ (and is further separated into two subcases by the value of $b$).
	In each part we divide the vertices of the path into pairs, and embed a certain tree in the bipartite subgraph of the original graph induced by each pair.
	This is where we use the assumption of $b$ being odd, i.e., the path has an even number of vertices.
	A similar cluster pairing strategy was presented and used by Dellamonica et al.~\cite[Theorem 7]{ResilienceLongCyclesRandomGraph}.
	
	If $t \in [\frac {C_1}{\log (1/\varepsilon)}\log n, 2\varepsilon m]$ is even, then we look at a single edge in the path, say, $\{1, 2\}$.
	The graph $G_0[V_1, V_2]$ is bipartite and the pair $(V_1, V_2)$ satisfies the $\varepsilon$-property in $G_0$.
	By \Cref{prop:TreeEmbd} we know that $G_0[V_1, V_2]$ contains a copy of every tree with at most $6\varepsilon m$ vertices and maximum degree at most $\tfrac1{16\varepsilon} - 1$.
	In particular, $G_0[V_1, V_2]$ contains a copy of $T^{(r,h)}_\ell$ (as in \Cref{def:Trhell}) for $r = \floor{\tfrac 1{16\varepsilon}} - 2$, $h = \ceil{\tfrac{\log (\varepsilon m)}{\log r}}$ and any odd $\ell \in [1, 2\varepsilon m]$ (as $T^{(r,h)}_1$ has at most $4\varepsilon m$ vertices for these values of $r$ and $h$, and thus $T^{(r,h)}_\ell$ has at most $6\varepsilon m$).
	Note that a maximal path in $T^{(r,h)}_\ell$ is of length $2h + \ell$.
	Set $\ell = t - 2h - 1$ (note that it satisfies the constraints, as $1 \leq t - 2h - 1 \leq 2\varepsilon m$) and we get that a maximal path in a $T^{(r,h)}_\ell$-copy is of length exactly $t-1$.
	Now, note that this copy of $T^{(r,h)}_\ell$ has at least $\varepsilon m$ leaves in $V_1$ and $\varepsilon m$ leaves in $V_2$, due to parity considerations.
	By the $\varepsilon$-property of the pair $(V_1, V_2)$ in $G_0$ there is an edge between these two sets of leaves, closing a cycle of length $\ell+2h+1 = t$, as required.
	
	If $b=1$ and $t\in [2\varepsilon m, (1-\delta)an]$ is even, for $a\coloneqq \tfrac{b+1}k$, then once again the graph $G_0[V_1, V_2]$ is bipartite and the pair $(V_1, V_2)$ satisfies the $\varepsilon$-property in $G_0$.
	We repeat the previous argument but with the only change of embedding a different tree in $G_0[V_1, V_2]$.
	By \Cref{prop:LargeTreeEmbd} we know that $G_0[V_1, V_2]$ contains a copy of $T^{(2,h)}_\ell$  for  $h = \ceil{\tfrac{\log (\varepsilon m)}{\log 2}}$ and $\ell = t-1-2h$.
	Also here, note that this copy of $T^{(2,h)}_\ell$ has at least $\varepsilon m$ leaves in $V_1$ and $\varepsilon m$ leaves in $V_2$, due to parity considerations.
	Again, by the $\varepsilon$-property of the pair $(V_1, V_2)$ in $G_0$ there is an edge between these two sets of leaves, closing a cycle of length $t$, as required.

	If $b\geq 3$ and $t \in [2\varepsilon m, (1-\delta)an]$ is even, for $a\coloneqq \tfrac{b+1}k$, then we look at the full path $(1, \ldots, b+1)$ and the set of corresponding clusters $V_1, \ldots, V_{b+1}$.
	Informally, we embed two copies of $T^{(2, h)}_\ell$ for some carefully chosen values $h, \ell$, one in $G_0[V_1, V_2]$, and one in $G_0[V_b, V_{b+1}]$.
	Then, if we have used all the clusters already for tree embedding (i.e., $b=3$), then we connect these two trees by two edges to create a cycle of the desired length.
	Otherwise, we keep embedding trees in all clusters we have not touched yet.
	Formally, we further separate this case into two subcases and argue as follows.
	
	Assume first that $b = 3$.
	For following the arguments of this subcase \Cref{fig:KeyLemmab=3} can be helpful.
	Note that each of the pairs $(V_1, V_2)$ and $(V_3, V_4)$ satisfies the $\varepsilon$-property in $G_0$, and that we have $|V_j| \in \{\floor{m}, \ceil{m} \}$ for any $j \in [4]$.
	Now let $j\in \{1, 3 \}$.
	By \Cref{prop:LargeTreeEmbd} we know that $G_0[V_j, V_{j+1}]$ contains a copy of $T^{2,h}_{\ell_j}$ where $h = \ceil{\frac{\log (\varepsilon m)}{\log 2}}$ and $\ell_j$ is such that $\ell_1 + \ell_3 = t - 4h - 2$, $|\ell_1 - \ell_3| \leq 2$, and both are even.
	Note here that $\ell_j \leq \tfrac12 t - 2h \leq 2(1 - 48\varepsilon)m$.
	We embed two such $T^{(2,h)}_{\ell_j}$-copies, $j\in \{1, 3 \}$, such that the leaf sets $L_2, L'_2$ and $L_3, L'_3$ are in $V_2$ and $V_3$, respectively (which is possible as $\ell_j$ is even).
	Having $|L_2|, |L'_2|, |L_3|, |L'_3| \geq \varepsilon m$, by the $\varepsilon$-property of the pair $(V_2, V_3)$ in $G_0$, there exist two edges, one between $L_2$ and $L_3$, and the other between $L'_2$ and $L'_3$.
	These two edges close a cycle of length exactly $t$.

\iffigure
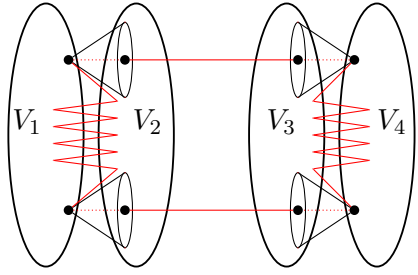
\begin{figure}
	\centering
	\begin{tikzpicture}[auto, vertex/.style={circle,draw=black!100,fill=black!100, thick,
		inner sep=0pt,minimum size=1mm}]
	\tikzstyle{novertex}=[circle, draw, red, inner sep=0pt, minimum size=0.1pt]

	\draw [line width=0.25mm] (-2.2,0.75) ellipse (0.5cm and 1.75cm);
	\node [label=$V_1$] at (-2.45,0.5) {};
	\node (1bot) at (-1.9,-0.25) [vertex] {};
	\node (1top) at (-1.9,1.75) [vertex] {};

	\draw [line width=0.25mm] (-1,0.75) ellipse (0.5cm and 1.75cm);
	\node [label={[shift={(0.4,0)}]$V_2$}] at (-1.25,0.5) {};
	\draw (-1.15,-0.25) ellipse (0.1cm and 0.5cm);
	\node (2botellipse) at (-1.15,-0.25) [vertex] {};
	\node (2botup) at (-1.15,0.25) [novertex] {};
	\node (2botdown) at (-1.15,-0.75) [novertex] {};
	\draw [-] (1bot) --node[inner sep=0pt,swap]{} (2botup);		
	\draw [-] (1bot) --node[inner sep=0pt,swap]{} (2botdown);
	\draw (-1.15,1.75) ellipse (0.1cm and 0.5cm);
	\node (2topellipse) at (-1.15,1.75) [vertex] {};
	\node (2topup) at (-1.15,2.25) [novertex] {};
	\node (2topdown) at (-1.15,1.25) [novertex] {};
	\draw [-] (1top) --node[inner sep=0pt,swap]{} (2topup);		
	\draw [-] (1top) --node[inner sep=0pt,swap]{} (2topdown);
	\node (2botpath) at (-1.25,0.3) [novertex] {};
	\node (1botpath) at (-2.1,0.4125) [novertex] {};
	\node (2botmidpath) at (-1.25,0.525) [novertex] {};
	\node (1botmidpath) at (-2.1,0.6375) [novertex] {};
	\node (2midpath) at (-1.25,0.75) [novertex] {};
	\node (1topmidpath) at (-2.1,0.8625) [novertex] {};
	\node (2topmidpath) at (-1.25,0.975) [novertex] {};
	\node (1toppath) at (-2.1,1.0875) [novertex] {};
	\node (2toppath) at (-1.25,1.2) [novertex] {};
	\draw [red,-] (1top) --node[inner sep=0pt,swap]{} (2toppath);
	\draw [red,-] (2toppath) --node[inner sep=0pt,swap]{} (1toppath);
	\draw [red,-] (1toppath) --node[inner sep=0pt,swap]{} (2topmidpath);
	\draw [red,-] (2topmidpath) --node[inner sep=0pt,swap]{} (1topmidpath);
	\draw [red,-] (1topmidpath) --node[inner sep=0pt,swap]{} (2midpath);
	\draw [red,-] (2midpath) --node[inner sep=0pt,swap]{} (1botmidpath);
	\draw [red,-] (1botmidpath) --node[inner sep=0pt,swap]{} (2botmidpath);
	\draw [red,-] (2botmidpath) --node[inner sep=0pt,swap]{} (1botpath);
	\draw [red,-] (1botpath) --node[inner sep=0pt,swap]{} (2botpath);
	\draw [red,-] (2botpath) --node[inner sep=0pt,swap]{} (1bot);

	\draw [line width=0.25mm] (2.2,0.75) ellipse (0.5cm and 1.75cm);
	\node [label=$V_4$] at (2.4,0.5) {};
	\node (4bot) at (1.9,-0.25) [vertex] {};
	\node (4top) at (1.9,1.75) [vertex] {};

	\draw [line width=0.25mm] (1,0.75) ellipse (0.5cm and 1.75cm);
	\node [label={[shift={(-0.07,0)}]$V_3$}] at (1,0.5) {};
	\draw (1.15,-0.25) ellipse (0.1cm and 0.5cm);
	\node (3botellipse) at (1.15,-0.25) [vertex] {};
	\node (3botup) at (1.15,0.25) [novertex] {};
	\node (3botdown) at (1.15,-0.75) [novertex] {};
	\draw [-] (4bot) --node[inner sep=0pt,swap]{} (3botup);		
	\draw [-] (4bot) --node[inner sep=0pt,swap]{} (3botdown);
	\draw (1.15,1.75) ellipse (0.1cm and 0.5cm);
	\node (3topellipse) at (1.15,1.75) [vertex] {};
	\node (3topup) at (1.15,2.25) [novertex] {};
	\node (3topdown) at (1.15,1.25) [novertex] {};
	\draw [-] (4top) --node[inner sep=0pt,swap]{} (3topup);
	\draw [-] (4top) --node[inner sep=0pt,swap]{} (3topdown);
	\node (3botpath) at (1.35,0.3) [novertex] {};
	\node (4botpath) at (2.1,0.4125) [novertex] {};
	\node (3botmidpath) at (1.35,0.525) [novertex] {};
	\node (4botmidpath) at (2.1,0.6375) [novertex] {};
	\node (3midpath) at (1.35,0.75) [novertex] {};
	\node (4topmidpath) at (2.1,0.8625) [novertex] {};
	\node (3topmidpath) at (1.35,0.975) [novertex] {};
	\node (4toppath) at (2.1,1.0875) [novertex] {};
	\node (3toppath) at (1.35,1.2) [novertex] {};
	\draw [red,-] (4top) --node[inner sep=0pt,swap]{} (3toppath);
	\draw [red,-] (3toppath) --node[inner sep=0pt,swap]{} (4toppath);
	\draw [red,-] (4toppath) --node[inner sep=0pt,swap]{} (3topmidpath);
	\draw [red,-] (3topmidpath) --node[inner sep=0pt,swap]{} (4topmidpath);
	\draw [red,-] (4topmidpath) --node[inner sep=0pt,swap]{} (3midpath);
	\draw [red,-] (3midpath) --node[inner sep=0pt,swap]{} (4botmidpath);
	\draw [red,-] (4botmidpath) --node[inner sep=0pt,swap]{} (3botmidpath);
	\draw [red,-] (3botmidpath) --node[inner sep=0pt,swap]{} (4botpath);
	\draw [red,-] (4botpath) --node[inner sep=0pt,swap]{} (3botpath);
	\draw [red,-] (3botpath) --node[inner sep=0pt,swap]{} (4bot);

	\draw [red,densely dotted] (1top) --node[inner sep=0pt,swap]{} (2topellipse);
	\draw [red,-] (2topellipse) --node[inner sep=0pt,swap]{} (3topellipse);
	\draw [red,densely dotted] (4top) --node[inner sep=0pt,swap]{} (3topellipse);
	\draw [red,densely dotted] (1bot) --node[inner sep=0pt,swap]{} (2botellipse);
	\draw [red,-] (2botellipse) --node[inner sep=0pt,swap]{} (3botellipse);
	\draw [red,densely dotted] (4bot) --node[inner sep=0pt,swap]{} (3botellipse);

	\end{tikzpicture}
	\caption{Embedding trees to create an even cycle (in red), proof of \Cref{lem:key}, $b=3$. (For the simplicity of the figure the roots of the $T^{(r,h)}_\ell$-copies are contained in $V_1$ and $V_4$, but they can rather be contained in $V_2$ and $V_3$, respectively, as well).}\label{fig:KeyLemmab=3}
\end{figure}
\fi

	Assume now that $b\geq 5$.
	When following the arguments of this subcase \Cref{fig:KeyLemma} can be helpful.
	In this subcase too we embed two $T^{(2,h)}_\ell$-copies, for a suitable choice of $h,\ell$, in $G_0[V_1, V_2]$ and in $G_0[V_b, V_{b+1}]$.
	However, we do not connect them directly by two edges, but through other $T^{(2,h)}_\ell$-copies we embed in the rest of clusters.
	More precisely, for each $j\in \{3,\ldots, b-1 \}$, arbitrarily split the vertex set $V_j$ into two equally sized subsets (up to possibly one vertex), denoted by $U_j,U'_j$.
	Set some $i \in \{2, \tfrac12 (b - 1) \}$ and look at the pair $(V_{2i-1}, V_{2i})$.
	Since $(V_{2i-1}, V_{2i})$ satisfies the $\varepsilon$-property in $G_0$, it follows that each of the pairs $(U_{2i-1}, U_{2i})$ and $(U'_{2i-1}, U'_{2i})$ satisfies the $\varepsilon'$-property in $G_0$, for $\varepsilon'$ satisfying $\varepsilon'(\tfrac12 m - 1) = \varepsilon m$ (namely, every two subsets, one from each set of the pair, of size at least $\varepsilon' (\tfrac12 m - 1)$ each, span an edge in $G_0$).
	We have $|U_{2i-1}|, |U_{2i}| \geq \tfrac12 m-1$, so by \Cref{prop:LargeTreeEmbd} (taking $\tfrac 12m-1$ instead of $m$) we get that $G_0[U_{2i-1}, U_{2i}]$ contains a copy of $T^{(2,h)}_{\ell_0}$ for $h = \ceil{\frac{\log (\varepsilon m)}{\log 2}}$ and $\ell_0 = \lfloor\tfrac{t - b +  1}{b+1}\rfloor_{odd} - 2h \leq 2(1 - 48\varepsilon)(\frac 12m-1)$, where $\lfloor x \rfloor_{odd}$ is the odd integer $y$ such that $y\leq x$ and $x-y< 2$.
	Denote this copy by $T_{2i-1, 2i}$ and its leaf sets in $U_{2i-1}, U_{2i}$ by $L_{2i-1}, L_{2i}$, respectively.
	We do the same for $G_0[U'_{2i-1}, U'_{2i}]$ where we denote the embedded copy of $T^{(2,h)}_{\ell_0}$ by $T'_{2i-1, 2i}$, and its leaf sets in $U'_{2i-1}, U'_{2i}$ by $L'_{2i-1}, L'_{2i}$, respectively.
	Do this for every $i\in \{2, \tfrac12 (b - 1) \}$ with the same notations.   
	If $b \geq 7$, then recall that for every $i \in \{2, \tfrac12 (b - 3) \}$ each of the pairs $(U_{2i}, U_{2i+1})$ and $(U'_{2i}, U'_{2i+1})$ satisfies the $\varepsilon'$-property in $G_0$, and moreover, note that we have $|L_{2i}|, |L_{2i+1}|, |L'_{2i}|, |L'_{2i+1}| \geq \varepsilon m$.
	Thus, for every $i \in \{2, \tfrac12 (b - 3) \}$ we have $e_{G_0}(L_{2i}, L_{2i+1}), e_{G_0}(L'_{2i}, L'_{2i+1}) > 0$, so we add an edge between every such two leaf sets, summing up to total of $b-5$ new edges.
	If $b = 5$ then there is only one pair of clusters we have splitted, $(V_3, V_4)$, so we not yet add any edges.
	This creates two disjoint copies of $T^{(2,h)}_{\ell^*}$ in $G_0$, where $h=\ceil{\frac{\log (\varepsilon m)}{\log 2}}$, $\ell^*=\frac 12(\ell_0+2h)(b-3)+\frac 12(b-5)-2h$, one contained in $U \coloneqq \bigcup_{j=3}^{b-1} U_j$ and the other in $U'\coloneqq \bigcup_{j=3}^{b-1} U'_j$.
	Moreover, the first $T^{(2,h)}_{\ell^*}$-copy, embedded in $U$, has at least $\varepsilon m$ leaves in $U_3$ and at least $\varepsilon m$ leaves in $U_{b-1}$.
	Similarly, the other $T^{(2, h)}_{\ell^*}$-copy, embedded in $U'$, has at least $\varepsilon m$ leaves in $U'_3$ and at least $\varepsilon m$ leaves in $U'_{b-1}$ (see \Cref{fig:KeyLemma}).
	Now, we treat the pairs $(V_j, V_{j+1})$ where $j \in \{1, b \}$ almost similarly to how we treated them in the subcase $b=3$.
	More formally, we note that $(V_j,V_{j+1})$ also has the $\varepsilon$-property in $G_0$ and that $|V_j|,|V_{j+1}| \in \{\floor{m}, \ceil{m} \}$.
	So by \Cref{prop:LargeTreeEmbd} we get that $G_0[V_j,V_{j+1}]$ contains a copy of $T^{(2,h)}_{\ell_j}$ for $h = \ceil{\frac{\log (\varepsilon m)}{\log 2}}$ and the $\ell_j$'s are such that $\ell_1+\ell_b = (t-2\ell^*-4h)-4h-4$, $|\ell_{1}-\ell_{b}|\leq 2$, and both are even (which means that $\ell_j \leq \frac 12 (t-2\ell^*-4h)-2h-1 \leq 2(1 - 48\varepsilon)m$), where the leaf sets $L_2,L_2'$, and $L_b,L_b'$ are in $V_2$ and $V_b$, respectively (as $\ell_j$ is even).
	Since $|L_{2}|, |L_2'|, |L_b|, |L_b'| \geq \varepsilon m$, once again, by the $\varepsilon$-property, we can connect some $v_2\in L_{2}$ with  $v_3\in L_3$, some $v_2'\in L_{2}'$ with $v_3'\in L_3'$, some $v_{b-1}\in L_{b-1}$ with $v_b\in L_b$, and some $v_{b-1}'\in L_{b-1}'$ with $v_b'\in L_b'$ (see \Cref{fig:KeyLemma}).
	By doing that we complete a cycle of length exactly $\ell_1 + \ell_b + 2\ell^* + 8h + 4 = t$.

	We now prove the second item.
	Suppose now that $S$ contains an odd cycle of length $b$, where $3\leq b < k$, and let $t \in \left[\tfrac{(b-1)\cdot C_1}{2\log(1/\varepsilon)}\log n, (1-\delta)an \right]$ be odd, $a = \tfrac bk$.
	Assume w.l.o.g.\ that this cycle is $(1, \ldots, b)$ and consider the set of corresponding clusters $V_1, \ldots, V_b$.
	Let $i\in \{1, \ldots,  \tfrac12 (b-1) \}$ and look at the pair $(V_{2i-1}, V_{2i})$.
	Using \Cref{prop:TreeEmbd} and \Cref{prop:LargeTreeEmbd} we embed one of two different possible trees in $G_0[V_{2i-1}, V_{2i}]$, depending on the value of $t$, to eventually create a cycle of the required length.
	Recall that the pair $(V_{2i-1}, V_{2i})$ satisfies the $\varepsilon$-property in $G_0$, and furthermore, that $|V_{2i-1}|, |V_{2i}| \geq \floor{m}$.
	Hence, by \Cref{prop:TreeEmbd} and \Cref{prop:LargeTreeEmbd}, $G_0[V_{2i-1}, V_{2i}]$ contains a copy of $T^{(r,h)}_\ell$ where $h = \ceil{\tfrac{\log (\varepsilon m)}{\log r}}$ for both $r = \floor{\tfrac1{16\varepsilon}} - 2, \ell\in[1, 2\varepsilon m]$ and $r = 2, \ell \in [1, 2(1-48\varepsilon)m]$, respectively.
	Thus, we embed a copy of $T^{(r,h)}_{\ell_i}$ in $G_0[V_{2i-1}, V_{2i}]$ for $h = \ceil{\tfrac{\log (\varepsilon m)}{\log r}}$, where $r = \floor{\tfrac1{16\varepsilon}} - 2$ if $t\in \left[\tfrac{(b-1)\cdot C_1}{2\log(1/\varepsilon)}\log n, 2\varepsilon m\right]$, and $r = 2$ if $t\in \left[2\varepsilon m, (1-\delta)an\right]$.
	We choose the value of $\ell_i$ as follows.
	For all $i\in \{2, \ldots, \tfrac12(b-1) \}$ we set $\ell_i = \ell_0 \coloneqq \floor{\tfrac{2t - 2 - \left(1 + 2h \right)(b-1)}{b-1}}_{odd}$, and $\ell_1 = t - 1 - \tfrac{b-1}2 - h(b-1) - \tfrac12(b-3)\ell_0$.
	Note that $\ell_1$ is also odd, and moreover, that $\ell_0, \ell_1 \in [1, 2(1-48\varepsilon)m]$.
	For every $i\in \{1, \ldots, \tfrac12(b-1) \}$ we denote the embedded $T^{(r,h)}_{\ell_i}$-copy in $G_0[V_{2i-1}, V_{2i}]$ by $T_{2i-1, 2i}$, and further denote by $L_{2i-1}, L_{2i}$ its leaf sets in $V_{2i-1}$ and in $V_{2i}$, respectively.
	Note that for every $i\in \{1, \ldots, \tfrac12(b-1)\}$, a maximal path in $T_{2i-1, 2i}$ is of length $2h + \ell_i$.
	Recall that for every $i\in \{1, \ldots,  \tfrac12(b-3) \}$, also the pair $(V_{2i}, V_{2i+1})$ satisfies the $\varepsilon$-property in $G_0$, and note that we have $|L_{2i}|, |L_{2i+1}| \geq \varepsilon m$.
	Thus we have $e_{G_0}(L_{2i}, L_{2i+1}) > 0$ for every $i\in \{1, \ldots, \tfrac12(b-3) \}$, so we add an edge between every such pair of leaf sets, summing up to $\tfrac12(b-3)$ new edges.
	Thus we get in $G_0$ a copy of the tree $T^{(r,h)}_{\ell^*}$, where $\ell^* = \sum_{i=1}^{\tfrac12(b-1)}\ell_i + (b-3)h + \tfrac12(b-3) = t - 2h - 2$, with at least $\varepsilon m$ leaves in $V_1$ and at least $\varepsilon m$ leaves in $V_{b-1}$.
	Some maximal path inside this tree (connecting the mentioned two leaf sets) will be used to get a cycle of length $t$ along with extra two edges.
	Now, we note that there exists a vertex $v_b\in V_b$ which is adjacent both to a vertex in $L_1$ and a vertex in $L_{b-1}$.
	Indeed, otherwise one of $L_1, L_{b-1}$ would have fewer than $(1-\varepsilon)\floor{m}$ neighbors in $V_b$, which contradicts the $\varepsilon$-property of the pairs $(V_1, V_b)$ and $(V_{b-1}, V_b)$ in $G_0$.
	Thus we can connect the vertex $v_b$ to a vertex in $L_1$ and to a vertex in $L_{b-1}$, adding two more edges and closing a cycle of length exactly $t$.
\end{proof}

\iffigure
\begin{figure}
	\centering
	\begin{tikzpicture}[auto, vertex/.style={circle,draw=black!100,fill=black!100, thick,
		inner sep=0pt,minimum size=1mm}]
		\tikzstyle{novertex}=[circle, draw, inner sep=0pt, minimum size=0.1pt]
		\tikzstyle{rednovertex}=[circle, draw, red, inner sep=0pt, minimum size=0.1pt]

		\draw[line width=0.25mm][rounded corners=20pt] (0,1) rectangle ++(6.5,1.5) {};
		\node [label=$U$] at (3.25,1.75) {};
		\node (uleft) at (1,1.75) [vertex] {};
		\draw (0.5,1.75) ellipse (0.1cm and 0.5cm);
		\node (uleftellipse) at (0.5,1.75) [vertex] {};
		\node (uleftup) at (0.5,2.25) [novertex] {};
		\node (uleftdown) at (0.5,1.25) [novertex] {};		
		\draw [-] (uleft) --node[inner sep=0pt,swap]{} (uleftup);		
		\draw [-] (uleft) --node[inner sep=0pt,swap]{} (uleftdown);
		\draw [red, densely dotted] (uleft) --node[inner sep=0pt,swap]{} (uleftellipse);
		\node (uright) at (5.5,1.75) [vertex] {};
		\draw (6,1.75) ellipse (0.1cm and 0.5cm);
		\node (urightellipse) at (6,1.75) [vertex] {};
		\node (urightup) at (6,2.25) [novertex] {};
		\node (urightdown) at (6,1.25) [novertex] {};
		\draw [-] (uright) --node[inner sep=0pt,swap]{} (urightup);		
		\draw [-] (uright) --node[inner sep=0pt,swap]{} (urightdown);
		\draw [red, densely dotted] (uright) --node[inner sep=0pt,swap]{} (urightellipse);
		\draw [red, densely dotted] (uright) --node[inner sep=0pt,swap]{} (uleft);

		\draw[line width=0.25mm][rounded corners=20pt] (0,-1) rectangle ++(6.5,1.5) {};
		\node [label=$U'$] at (3.25,-0.25) {};
		\node (u'left) at (1,-0.25) [vertex] {};
		\draw (0.5,-0.25) ellipse (0.1cm and 0.5cm);
		\node (u'leftellipse) at (0.5,-0.25) [vertex] {};		
		\node (u'leftup) at (0.5,0.25) [novertex] {};
		\node (u'leftdown) at (0.5,-0.75) [novertex] {};
		\draw [-] (u'left) --node[inner sep=0pt,swap]{} (u'leftup);		
		\draw [-] (u'left) --node[inner sep=0pt,swap]{} (u'leftdown);
		\draw [red, densely dotted] (u'left) --node[inner sep=0pt,swap]{} (u'leftellipse);
		\node (u'right) at (5.5,-0.25) [vertex] {};
		\draw (6,-0.25) ellipse (0.1cm and 0.5cm);
		\node (u'rightellipse) at (6,-0.25) [vertex] {};
		\node (u'rightup) at (6,0.25) [novertex] {};
		\node (u'rightdown) at (6,-0.75) [novertex] {};
		\draw [-] (u'right) --node[inner sep=0pt,swap]{} (u'rightup);		
		\draw [-] (u'right) --node[inner sep=0pt,swap]{} (u'rightdown);
		\draw [red, densely dotted] (u'right) --node[inner sep=0pt,swap]{} (u'rightellipse);
		\draw [red, densely dotted] (u'right) --node[inner sep=0pt,swap]{} (u'left);

		\draw [line width=0.25mm] (-2.2,0.75) ellipse (0.5cm and 1.75cm);
		\node [label=$V_1$] at (-2.45,0.5) {};
		\node (1bot) at (-1.9,-0.25) [vertex] {};
		\node (1top) at (-1.9,1.75) [vertex] {};

		\draw [line width=0.25mm] (-1,0.75) ellipse (0.5cm and 1.75cm);
		\node [label={[shift={(0.4,0)}]$V_2$}] at (-1.25,0.5) {};
		\draw (-1.15,-0.25) ellipse (0.1cm and 0.5cm);
		\node (2botellipse) at (-1.15,-0.25) [vertex] {};
		\node (2botup) at (-1.15,0.25) [novertex] {};
		\node (2botdown) at (-1.15,-0.75) [novertex] {};
		\draw [-] (1bot) --node[inner sep=0pt,swap]{} (2botup);		
		\draw [-] (1bot) --node[inner sep=0pt,swap]{} (2botdown);
		\draw [red, densely dotted] (1bot) --node[inner sep=0pt,swap]{} (2botellipse);
		\draw (-1.15,1.75) ellipse (0.1cm and 0.5cm);
		\node (2topellipse) at (-1.15,1.75) [vertex] {};
		\node (2topup) at (-1.15,2.25) [novertex] {};
		\node (2topdown) at (-1.15,1.25) [novertex] {};
		\draw [-] (1top) --node[inner sep=0pt,swap]{} (2topup);		
		\draw [-] (1top) --node[inner sep=0pt,swap]{} (2topdown);
		\draw [red, densely dotted] (1top) --node[inner sep=0pt,swap]{} (2topellipse);
		\draw [red,-] (uleftellipse) --node[inner sep=0pt,swap]{} (2topellipse);
		\draw [red,-] (u'leftellipse) --node[inner sep=0pt,swap]{} (2botellipse);
		\node (2botpath) at (-1.25,0.3) [rednovertex] {};
		\node (1botpath) at (-2.1,0.4125) [rednovertex] {};
		\node (2botmidpath) at (-1.25,0.525) [rednovertex] {};
		\node (1botmidpath) at (-2.1,0.6375) [rednovertex] {};
		\node (2midpath) at (-1.25,0.75) [rednovertex] {};
		\node (1topmidpath) at (-2.1,0.8625) [rednovertex] {};
		\node (2topmidpath) at (-1.25,0.975) [rednovertex] {};
		\node (1toppath) at (-2.1,1.0875) [rednovertex] {};
		\node (2toppath) at (-1.25,1.2) [rednovertex] {};
		\draw [red,-] (1top) --node[inner sep=0pt,swap]{} (2toppath);
		\draw [red,-] (2toppath) --node[inner sep=0pt,swap]{} (1toppath);
		\draw [red,-] (1toppath) --node[inner sep=0pt,swap]{} (2topmidpath);
		\draw [red,-] (2topmidpath) --node[inner sep=0pt,swap]{} (1topmidpath);
		\draw [red,-] (1topmidpath) --node[inner sep=0pt,swap]{} (2midpath);
		\draw [red,-] (2midpath) --node[inner sep=0pt,swap]{} (1botmidpath);
		\draw [red,-] (1botmidpath) --node[inner sep=0pt,swap]{} (2botmidpath);
		\draw [red,-] (2botmidpath) --node[inner sep=0pt,swap]{} (1botpath);
		\draw [red,-] (1botpath) --node[inner sep=0pt,swap]{} (2botpath);
		\draw [red,-] (2botpath) --node[inner sep=0pt,swap]{} (1bot);

		\draw [line width=0.25mm] (8.7,0.75) ellipse (0.5cm and 1.75cm);
		\node [label=$V_b$] at (8.9,0.5) {};
		\node (bbot) at (8.4,-0.25) [vertex] {};
		\node (btop) at (8.4,1.75) [vertex] {};

		\draw [line width=0.25mm] (7.5,0.75) ellipse (0.5cm and 1.75cm);
		\node [label={[shift={(-0.07,0)}]$V_{b-1}$}] at (7.5,0.5) {};
		\draw (7.65,-0.25) ellipse (0.1cm and 0.5cm);
		\node (b1botellipse) at (7.65,-0.25) [vertex] {};
		\node (b1botup) at (7.65,0.25) [novertex] {};
		\node (b1botdown) at (7.65,-0.75) [novertex] {};
		\draw [-] (bbot) --node[inner sep=0pt,swap]{} (b1botup);		
		\draw [-] (bbot) --node[inner sep=0pt,swap]{} (b1botdown);
		\draw [red, densely dotted] (bbot) --node[inner sep=0pt,swap]{} (b1botellipse);
		\draw (7.65,1.75) ellipse (0.1cm and 0.5cm);
		\node (b1topellipse) at (7.65,1.75) [vertex] {};
		\node (b1topup) at (7.65,2.25) [novertex] {};
		\node (b1topdown) at (7.65,1.25) [novertex] {};
		\draw [-] (btop) --node[inner sep=0pt,swap]{} (b1topup);
		\draw [-] (btop) --node[inner sep=0pt,swap]{} (b1topdown);
		\draw [red, densely dotted] (btop) --node[inner sep=0pt,swap]{} (b1topellipse);
		\draw [red,-] (urightellipse) --node[inner sep=0pt,swap]{} (b1topellipse);
		\draw [red,-] (u'rightellipse) --node[inner sep=0pt,swap]{} (b1botellipse);
		\node (b1botpath) at (7.85,0.3) [rednovertex] {};
		\node (bbotpath) at (8.6,0.4125) [rednovertex] {};
		\node (b1botmidpath) at (7.85,0.525) [rednovertex] {};
		\node (bbotmidpath) at (8.6,0.6125) [rednovertex] {};
		\node (b1midpath) at (7.85,0.75) [rednovertex] {};
		\node (btopmidpath) at (8.6,0.8625) [rednovertex] {};
		\node (b1topmidpath) at (7.85,0.975) [rednovertex] {};
		\node (btoppath) at (8.6,1.0875) [rednovertex] {};
		\node (b1toppath) at (7.85,1.2) [rednovertex] {};
		\draw [red,-] (btop) --node[inner sep=0pt,swap]{} (b1toppath);
		\draw [red,-] (b1toppath) --node[inner sep=0pt,swap]{} (btoppath);
		\draw [red,-] (btoppath) --node[inner sep=0pt,swap]{} (b1topmidpath);
		\draw [red,-] (b1topmidpath) --node[inner sep=0pt,swap]{} (btopmidpath);
		\draw [red,-] (btopmidpath) --node[inner sep=0pt,swap]{} (b1midpath);
		\draw [red,-] (b1midpath) --node[inner sep=0pt,swap]{} (bbotmidpath);
		\draw [red,-] (bbotmidpath) --node[inner sep=0pt,swap]{} (b1botmidpath);
		\draw [red,-] (b1botmidpath) --node[inner sep=0pt,swap]{} (bbotpath);
		\draw [red,-] (bbotpath) --node[inner sep=0pt,swap]{} (b1botpath);
		\draw [red,-] (b1botpath) --node[inner sep=0pt,swap]{} (bbot);

	\end{tikzpicture}
	\caption{Embedding trees to create an even cycle (in red), proof of \Cref{lem:key}, $b\ge 5$. (For the simplicity of the figure the roots of the $T^{(r,h)}_\ell$-copies are contained in $V_1$ and $V_b$, but they can rather be contained in $V_2$ and $V_{b-1}$, respectively, as well).}\label{fig:KeyLemma}
\end{figure}
\fi

\section{Robustness}\label{sec:robust}

In this section we prove \Cref{thm:mainRobustness}, and discuss its tightness.
We show that this result is tight for many values of $t$, and we prove \Cref{cl:TightOdd}, giving a tighter result for the cases in which \Cref{thm:mainRobustness} is not tight enough.

For the proofs in this section we use Szemer\'edi's celebrated Regularity Lemma~\cite{RegularityLemma}. 

\begin{theorem}[Szemer\'edi's Regularity Lemma \cite{RegularityLemma}]\label{thm:RegularityLemma}
	For every positive real $\varepsilon$ and for every positive integer $k_0$ there are positive integers $n_0$ and $K_0$ with the following property: for every graph $G$ on $n\geq n_0$ vertices there is an $\varepsilon$-regular partition $\Pi=(V_1,\dots,V_k)$ of $V(G)$ such that $\left||V_i|-|V_j|\right|\leq  1$ and $k_0\leq k\leq K_0$.
\end{theorem}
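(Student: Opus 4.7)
My plan would be to follow the classical energy-increment proof of Szemerédi, built around the following three ingredients.

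First, define the \emph{index} (or mean-square density) of a partition $\Pi=(V_1,\dots,V_k)$ of $V(G)$ by
\[
\mathrm{ind}(\Pi)=\sum_{i<j}\frac{|V_i||V_j|}{n^2}\,d(V_i,V_j)^2,
\]
where $d(U,W)=e(U,W)/(|U||W|)$. Observe that $0\le\mathrm{ind}(\Pi)\le \tfrac12$. The core analytic step (a defect version of Cauchy--Schwarz) is: if $(U,W)$ is \emph{not} $\varepsilon$-regular, witnessed by $U'\subseteq U$, $W'\subseteq W$ with $|U'|\ge\varepsilon|U|$, $|W'|\ge\varepsilon|W|$ and $|d(U',W')-d(U,W)|>\varepsilon$, then subdividing $U$ into $\{U',U\setminus U'\}$ and $W$ into $\{W',W\setminus W'\}$ strictly increases the contribution of this pair to the index by at least $\varepsilon^4\,|U||W|/n^2$. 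Summing over all irregular pairs of a non-$\varepsilon$-regular partition, any common refinement that splits each $V_i$ according to all witnesses simultaneously achieves $\mathrm{ind}(\Pi')\ge\mathrm{ind}(\Pi)+\varepsilon^5$.

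Second, I would iterate. Start from an arbitrary equitable partition $\Pi_0$ into $k_0$ parts (e.g.\ an arbitrary balanced partition). If $\Pi_t$ is not $\varepsilon$-regular, produce the refinement $\Pi_{t+1}$ above; if at any stage it is $\varepsilon$-regular we stop. Because the index is bounded by $\tfrac12$ and grows by at least $\varepsilon^5$ per step, this halts after at most $T\le\lceil 1/(2\varepsilon^5)\rceil$ iterations. At each step the number of classes can be multiplied by at most $2^k$, so the final number of parts is bounded by a tower-type function of $\varepsilon$ and $k_0$; this bound is taken as $K_0(\varepsilon,k_0)$.

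Third, the refinement above need not be equitable, so one last step converts any $\varepsilon$-regular (possibly non-equitable) partition of bounded size into an equitable one with sizes differing by at most $1$ and with slightly weaker regularity parameter. The standard device is: fix a small block length $q$, partition each $V_i$ arbitrarily into blocks of size $q$ with a small remainder, redistribute the remainders into $k'$ equal classes with $|V'_i|\in\{\lfloor n/k'\rfloor,\lceil n/k'\rceil\}$, and verify (using that the redistributed portion has density $O(1/q)$ of each class) that at the cost of absorbing a further $O(1/q)$ into $\varepsilon$, almost all pairs remain regular. Choosing $q$ and $k'$ appropriately in terms of $\varepsilon$ and the bound on the number of parts from the previous step yields $n_0$ and $K_0$ as required.

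The main obstacle, and the part that requires care, is the defect-Cauchy--Schwarz estimate and the bookkeeping of parameters through the equalization step: one must track how the regularity parameter $\varepsilon$ and the lower and upper bounds $k_0,K_0$ on the number of classes propagate, to end up with a genuinely $\varepsilon$-regular equitable partition satisfying both $|V_i|-|V_j|\le 1$ and $k_0\le k\le K_0$. Everything else is essentially mechanical.
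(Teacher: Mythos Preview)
Your outline is the standard energy-increment proof of Szemer\'edi's Regularity Lemma and is correct in its main ideas. However, the paper does not prove this statement: it is quoted as a classical result from \cite{RegularityLemma} and used as a black box in Section~6, so there is no proof in the paper to compare against.
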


The following lemma bounds from below the number of edges in the reduced graph $R$ of the graph $G$ from \Cref{thm:mainRobustness}, similarly to \Cref{lem:EdgesRGraph}.

\begin{lemma}\label{lem:EdgeRGraphRbstns}
	Let $\beta>0$ and $\varepsilon\leq \frac {\beta}{100}$.
	Let $G$ be a graph on $n\geq n_0$ vertices with an $\varepsilon$-regular partition $\Pi=(V_1,\dots,V_k)$ provided by the Regularity Lemma with parameters $\varepsilon$ and $k\geq \frac {5}{\beta}$.
	Assume that $e(G)\geq (x+\beta) \binom n2$ for a constant $0\leq x<1-\beta$.
	Let $R:=R(G,\Pi, \rho, \varepsilon)$ be the reduced graph as in \Cref{def:RGraph} (and as mentioned in \Cref{def:RGraph}, here $p=1$) where $\rho=10\varepsilon$.
	Then $e(R)\geq(x+\beta/2) \binom k2$.
\end{lemma}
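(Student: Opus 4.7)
The plan is to follow the same edge-counting strategy used in the proof of \Cref{lem:EdgesRGraph}, but in the fully dense setting (where $p=1$), so that no upper-uniformity hypothesis is needed: the trivial bound $e_G(U,W)\leq |U||W|$ replaces the role played by $(p,\eta)$-upper-uniformity in the sparse case. Write $m=n/k$, so that $|V_i|\in\{\lfloor m\rfloor, \lceil m\rceil\}$ for all $i$.

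First, I would partition the edges of $G$ according to where their endpoints lie with respect to $\Pi$, and upper-bound each class. The number of edges inside the clusters is at most $k\binom{\lceil m\rceil}{2}\leq \tfrac{1}{k}(1+o(1))\cdot\tfrac{n^2}{2}$. The number of edges lying in irregular pairs is at most $\varepsilon\binom{k}{2}\lceil m\rceil^2\leq \varepsilon(1+o(1))\tfrac{n^2}{2}$. The number of edges contained in $\varepsilon$-regular pairs of density strictly less than $\rho$ is at most $\binom{k}{2}\rho\lceil m\rceil^2\leq \rho(1+o(1))\tfrac{n^2}{2}$. Finally, the edges lying in pairs $(V_i,V_j)$ that correspond to edges of $R$ contribute at most $e(R)\lceil m\rceil^2\leq \tfrac{2e(R)}{k^2}(1+o(1))\tfrac{n^2}{2}$.

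Summing these four contributions and using the hypothesis $e(G)\geq (x+\beta)\binom{n}{2}\geq (x+\beta)(1-o(1))\tfrac{n^2}{2}$, I obtain
\[
(x+\beta)(1-o(1))\cdot\tfrac{n^2}{2} \;\leq\; \left(\tfrac{1}{k}+\varepsilon+\rho+\tfrac{2e(R)}{k^2}\right)(1+o(1))\cdot\tfrac{n^2}{2},
\]
from which
\[
e(R) \;\geq\; \left((x+\beta)-\varepsilon-\rho-\tfrac{1}{k}-o(1)\right)\tfrac{k^2}{2}.
\]
Plugging in the parameter restrictions $\varepsilon\leq \beta/100$, $\rho=10\varepsilon\leq\beta/10$, and $k\geq 5/\beta$ (so $1/k\leq\beta/5$), the subtracted terms are strictly less than $\beta/2$, so for $n\geq n_0$ large enough (absorbing the $o(1)$ term into the slack $\beta/2-(1/100+1/10+1/5)\beta>0$) we obtain $e(R)\geq (x+\beta/2)\tfrac{k^2}{2}\geq (x+\beta/2)\binom{k}{2}$.

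The proof is essentially a direct transcription of the edge-count in \Cref{lem:EdgesRGraph} with $p=1$ and without the $(1+\eta)$ factors. There is no real obstacle; the only thing to watch is the routine bookkeeping to ensure that the combined slack from $\varepsilon$, $\rho$, and $1/k$ stays below $\beta/2$ under the stated parameter choices, which it comfortably does.
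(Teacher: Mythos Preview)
Your proposal is correct and follows essentially the same approach as the paper: both argue by bounding the edges inside clusters, in irregular pairs, and in low-density regular pairs, and then use the global edge count to force $e(R)\geq (x+\beta/2)\binom{k}{2}$. The paper phrases this as counting the edges of $G-G'$ (where $G'$ keeps only the edges in pairs corresponding to $E(R)$) and then dividing by $(n/k)^2$, but the arithmetic and parameter bookkeeping are the same as yours.
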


\begin{proof}
	Let $G'$ be the subgraph of $G$ obtained by keeping only the edges between the clusters $V_i,V_j$ for which $\{i,j\}\in E(R)$.
	We count the edges of $G - G'$ as follows.
	\begin{itemize}
		\item Edges in non-regular pairs.
		There are at most $\varepsilon \binom k2 \tfrac{n^2}{k^2} \leq \tfrac1{200}\beta n^2$ such edges.
		\item Edges in regular pairs with density less than $\rho$.
		There are at most $\rho \binom k2 \tfrac{n^2}{k^2} \leq \tfrac1{20}\beta n^2$ such edges. 
		\item Edges inside clusters. There are at most $k\cdot \binom{n/k}2\leq \frac {n^2}{2k}<\frac 1{10}\beta n^2$.
	\end{itemize}
	In total we kept all but at most $\tfrac{31}{200} \beta n^2 < \tfrac13 \beta \binom n2$ edges, so $G'$ has at least  $\left(x +  2\beta/3 \right)\binom n2 \geq (x+\beta/2) \binom k2 \left(\frac nk\right)^2$ edges.
	Since any edge of $R$ corresponds to at most $\left( \tfrac nk \right)^2$ edges of $G'$, we get $e(R) \geq (x+\beta/2) \binom k2$ as required.
\end{proof}

The following claim and corollary connect the reduced graph of $G$ and the $\varepsilon$-graph of $G(p)$, with  respect to the same partition $\Pi$. 
\begin{claim}\label{cl:EpsPptyRobust}
	Let $\varepsilon>0$ and let $G$ be a graph on $n$ vertices, and assume that $\Pi=(V_1,V_2,\dots,V_k)$ is an $\varepsilon$-regular partition of $V(G)$ with $||V_i|-|V_j||\leq 1$, for some $k \coloneqq k(\varepsilon)$.
	Then there exists $C\coloneqq C(\varepsilon,k)$ such that  for $p\geq\frac Cn$, and for every $i,j$ where $(V_i,V_j)$ is an $\varepsilon$-regular pair with $d(V_i,V_j)\geq \rho=10\varepsilon$, we have that \whp $(V_i,V_j)$ satisfies the $\varepsilon$-property in the random graph $G(p)$.
\end{claim}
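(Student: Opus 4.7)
The plan is to prove the claim via a straightforward first-moment / union bound argument over all potential ``bad'' subset pairs. Fix indices $i,j$ such that $(V_i,V_j)$ is $\varepsilon$-regular with $d(V_i,V_j)\geq \rho=10\varepsilon$. Write $m=n/k$ (up to $\pm 1$). By monotonicity of the $\varepsilon$-property under inclusion, it suffices to guarantee that every pair $(U_1,U_2)$ with $U_1\subseteq V_i$, $U_2\subseteq V_j$, $|U_1|=|U_2|=\lceil\varepsilon m\rceil$ spans at least one edge in $G(p)$: any larger pair contains such subsets, and an edge between them is still an edge between the larger sets.

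For any such fixed $(U_1,U_2)$, the regularity of $(V_i,V_j)$ combined with $d(V_i,V_j)\geq \rho$ yields
\[ e_G(U_1,U_2) \;\geq\; (\rho-\varepsilon)|U_1||U_2| \;\geq\; 9\varepsilon \lceil \varepsilon m\rceil^{2} \;\geq\; 9\varepsilon^3 m^2. \]
Since $G(p)$ retains each edge of $G$ independently with probability $p$, the probability that no edge of $G[U_1,U_2]$ survives is at most
\[ (1-p)^{9\varepsilon^3 m^2} \;\leq\; \exp\!\left(-9\varepsilon^3 m^2 p\right) \;\leq\; \exp\!\left(-\tfrac{9\varepsilon^3 C}{k^2}\,n\right), \]
using $p\geq C/n$ and $m\geq n/(2k)$ (absorbing the factor $1/4$ into $C$).

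The number of choices for such $(U_1,U_2)$ inside $V_i\times V_j$ is at most
\[ \binom{\lceil m\rceil}{\lceil\varepsilon m\rceil}^{2} \;\leq\; \left(\tfrac{e}{\varepsilon}\right)^{2\varepsilon m} \;=\; \exp\!\left(2\varepsilon\log(e/\varepsilon)\cdot m\right) \;\leq\; \exp\!\left(\tfrac{2\varepsilon\log(e/\varepsilon)}{k}\,n\right). \]
Union-bounding over all such pairs inside a single $(V_i,V_j)$, and then over the at most $\binom{k}{2}$ pairs of clusters, the probability that some $\varepsilon$-regular dense pair fails the $\varepsilon$-property in $G(p)$ is at most
\[ k^2\,\exp\!\left(\left(\tfrac{2\varepsilon\log(e/\varepsilon)}{k}-\tfrac{9\varepsilon^3 C}{k^2}\right)n\right). \]
Choosing $C=C(\varepsilon,k)$ large enough that $9\varepsilon^3 C/k^2 > 4\varepsilon\log(e/\varepsilon)/k$, say $C\geq k\log(e/\varepsilon)/\varepsilon^2$ times a suitable absolute constant, the exponent becomes linearly negative in $n$, so this probability is $e^{-\Omega(n)}=o(1)$, establishing the claim \whp

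No serious obstacle arises: the argument is a clean union bound, and the only mild subtlety is ensuring the comparison ``$e_G(U_1,U_2)\geq (\rho-\varepsilon)|U_1||U_2|$'' is valid for the specific threshold $|U_1|,|U_2|=\lceil\varepsilon m\rceil$ rather than the strict inequality $|U_i|\geq \varepsilon|V_i|$ of \Cref{def:reg}; since $k$ is constant and $|V_i|\in\{\lfloor m\rfloor,\lceil m\rceil\}$, the requirement $\lceil\varepsilon m\rceil\geq \varepsilon|V_i|$ holds trivially, so regularity applies directly.
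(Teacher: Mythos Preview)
Your proof is correct and follows essentially the same approach as the paper: use regularity to bound $e_G(U_1,U_2)\geq 9\varepsilon|U_1||U_2|$ from below, bound the probability that a fixed pair becomes empty in $G(p)$ by $(1-p)^{e_G(U_1,U_2)}$, and apply a union bound over all subset pairs, choosing $C=C(\varepsilon,k)$ large enough to make the failure probability $e^{-\Omega(n)}$. Your version is in fact slightly more careful (the monotonicity reduction to sets of size exactly $\lceil\varepsilon m\rceil$, and the check that regularity applies at that threshold), but there is no substantive difference.
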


\begin{proof}
	Denote $\floor{m}\leq |V_i|\leq \ceil{m}$, where $m=\tfrac nk$.
	Let $U_i\subseteq V_i$ and $U_j\subseteq V_j$ be such that $|U_i|, |U_j| \geq \varepsilon m$.
	By $\varepsilon$-regularity we have $\left|d(V_i,V_j) - d(U_i,U_j) \right| \leq \varepsilon$. Combining it with the assumption $d(V_i, V_j) \geq \rho$, we have that
	\[e_G(U_i,U_j)\geq (\rho-\varepsilon)|U_i||U_j| = 9\varepsilon |U_i||U_j|. \]
	For two disjoint subsets $U_i,U_j$ of $V(G)$, denote by $e_p(U_i,U_j)$ the random variable counting the number of edges between these sets in $G(p)$.
	Then $e_p(U_i,U_j)$ is distributed binomially with parameters $e_G(U_i, U_j)$ and $p$.
	Hence, the probability that there exist two such sets that do not satisfy the $\varepsilon$-property in $G(p)$ is at most $\binom n{\varepsilon m}^2\Pr[e_p(U_i,U_j)=0]\leq e^{-\Omega(n)}$, for, say, $p\geq \frac {\log k}{\varepsilon^2m}$.
\end{proof}

\begin{corollary}\label{cor:edgesInSgraphRobust}
		Let $0<x<1$, $0<\beta<1-x$ and let $G$ be a graph on $n$ vertices with $e(G)\geq (x+\beta) \binom n2$ and an $\varepsilon$-regular partition $\Pi=(V_1,\dots,V_k)$ of its vertices with  $\varepsilon\leq\frac {\beta}{100}$ and $k\geq \frac {2}{\varepsilon^2}$.
		Let $R\coloneqq R(G,\Pi, \rho, \varepsilon)$ be the reduced graph as  in \Cref{def:RGraph}.
		Let $p\geq \frac Cn$ where $C$ is as in the previous claim, and let $S\coloneqq S(G(p),\Pi, \varepsilon)$ be the $\varepsilon$-graph corresponding to $G(p)$, as  in \Cref{def:SGraph}.
		Then \whp $R\subseteq S$, and therefore \whp $e(S)\geq(x+\beta/2) \binom k2$.
\end{corollary}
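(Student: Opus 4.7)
The plan is to combine \Cref{cl:EpsPptyRobust} with \Cref{lem:EdgeRGraphRbstns} in a straightforward way. Since both ingredients are already established, the work essentially reduces to a union bound over the (constantly many) edges of the reduced graph, together with an appeal to the lemma to lower-bound $e(R)$ and hence $e(S)$.

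First I would verify the containment $R\subseteq S$. Fix any edge $\{i,j\}\in E(R)$. By \Cref{def:RGraph}, this means the pair $(V_i,V_j)$ is $\varepsilon$-regular in $G$ with density $d(V_i,V_j)\geq \rho=10\varepsilon$. \Cref{cl:EpsPptyRobust} then applies to this pair (note $\varepsilon \leq \beta/100$ and $k\geq 2/\varepsilon^2$ are exactly the hypotheses it needs), producing a constant $C=C(\varepsilon,k)$ such that for $p\geq C/n$, the pair $(V_i,V_j)$ satisfies the $\varepsilon$-property in $G(p)$ with failure probability at most $e^{-\Omega(n)}$. By \Cref{def:SGraph} this is exactly the statement $\{i,j\}\in E(S)$.

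Next I would take a union bound. Since $k$ is a constant (depending only on $\varepsilon$, hence only on $\beta$), there are at most $\binom{k}{2}$ edges in $R$, and the probability that \emph{some} edge of $R$ fails to appear in $S$ is at most $\binom{k}{2} e^{-\Omega(n)}=e^{-\Omega(n)}$. Thus w.h.p.\ $R\subseteq S$, in particular $e(S)\geq e(R)$.

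Finally, I would invoke \Cref{lem:EdgeRGraphRbstns} on $G$ with the same partition $\Pi$ (whose hypotheses $\varepsilon\leq \beta/100$ and $k\geq 5/\beta$ follow from $k\geq 2/\varepsilon^2\geq 2\cdot 100^2/\beta^2 \gg 5/\beta$) to conclude $e(R)\geq (x+\beta/2)\binom{k}{2}$. Combined with the containment from the previous step, this gives $e(S)\geq (x+\beta/2)\binom{k}{2}$ w.h.p., as required. There is no real obstacle here — the corollary is essentially a packaging statement linking the deterministic reduced-graph bound of \Cref{lem:EdgeRGraphRbstns} to the random $\varepsilon$-graph via \Cref{cl:EpsPptyRobust}; the only minor point to watch is that the constant $C$ coming out of \Cref{cl:EpsPptyRobust} depends on $\varepsilon$ and $k$, which is harmless since both are constants determined by $\beta$.
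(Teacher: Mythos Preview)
Your proposal is correct and matches the paper's intended approach: the corollary is stated without an explicit proof precisely because it is a direct combination of \Cref{cl:EpsPptyRobust} (applied to each edge of $R$, with a union bound over the constantly many pairs) and \Cref{lem:EdgeRGraphRbstns}. One tiny quibble: the hypotheses $\varepsilon\leq\beta/100$ and $k\geq 2/\varepsilon^2$ are not needed for invoking \Cref{cl:EpsPptyRobust} itself (which only requires an $\varepsilon$-regular pair of density $\geq\rho$), but rather for \Cref{lem:EdgeRGraphRbstns}, as you correctly use them later.
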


We can now prove \Cref{thm:mainRobustness}.

\begin{proof}[Proof of \Cref{thm:mainRobustness}]
	We can assume $0<\beta<1/4$.
	Set $\varepsilon = \frac \beta{10000}$ and $k_0 = \frac {2}{\varepsilon^2}$.
	Take $n_0,K$ as given in the Regularity Lemma (\Cref{thm:RegularityLemma}), and also set $\gamma = \tfrac{2(1-48\varepsilon)}k$.
	Let $\frac {C_1}{\log(1/\beta)}\cdot \log n \leq t \leq (1-C_2\beta )n$, where $C_1,C_2$ are the absolute constants from \Cref{cor:key}.
	Let $G$ be a graph on $n\geq n_0$ vertices with $e(G) \geq ex(n, C_t) + \beta \binom n2 \geq \left(g^\gamma(t,n) + \beta/2 \right)\binom n2$ (recall that $ex(n, C_t) \geq g^\gamma(t,n)\binom n2 - 1$).
	Then by \Cref{thm:RegularityLemma} there exists an $\varepsilon$-regular partition $\Pi=(V_1,\dots,V_k)$ of $V(G)$ such that $||V_i|-|V_j||\leq 1$, with $k_0 \leq k \leq K$.
	
	We next look at the graph $G(p)$ with the same partition and consider the $\varepsilon$-graph $S\coloneqq S(G(p),\Pi, \varepsilon)$. By \Cref{cor:edgesInSgraphRobust} we have that \whp $e(S)\geq (g^\gamma(t,n)+\beta/4)\binom k2$.
	Using \Cref{cor:key} we get that \whp $G(p)$ contains a cycle of length $t$.
\end{proof}

\begin{remark}\label{re:TightRobustness}
	As mentioned in the introduction and in the beginning of this section, \Cref{thm:mainRobustness} is tight in the sense that for many values of $t$ taking a graph $G$ with $\Theta(n^2)$ extra edges above the extremal number $ex(n, C_t)$ is in fact necessary for having \whp a copy of $C_t$ in $G(p)$ where $p = \tfrac Cn$.
	However, there are values of $t$ for which only $\omega(n)$ extra edges suffice.
\end{remark}

The following claim gives a description of the cases for which \Cref{thm:mainRobustness} is tight.

\begin{claim}\label{cl:TightEven}
	If $t$ is even, or is odd with $t\geq \tfrac n2$, then adding $\beta \binom n2$ edges to the extremal amount of edges in \Cref{thm:mainRobustness} is necessary.
\end{claim}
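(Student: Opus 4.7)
The plan is to exhibit, for every $t$ satisfying the hypothesis, a graph $G$ on $n$ vertices with $e(G)\geq ex(n,C_t)+\Omega(n^2)$ such that $G(p)$ contains no copy of $C_t$ with high probability. This will show that the additive $\beta\binom n2$ term in \Cref{thm:mainRobustness} cannot be replaced by a function of order smaller than $n^2$, in sharp contrast with the $\omega(1/p)=\omega(n)$ term available in \Cref{cl:TightOdd} for odd $t<n/2$.

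The core case is $t\geq \tfrac12(n+3)$, of either parity. The extremal graph for $ex(n,C_t)$ from \Cref{thm:woodall} is the ``bowtie'' $G_0=K_A\cup_v K_B$ with $|A|=t-1$, $|B|=n-t+2$ and $A\cap B=\{v\}$. Since $v$ is a cut vertex and $|A|,|B|<t$, every cycle in $G_0$ (and hence in $G_0(p)$) lies inside one clique and cannot reach length $t$. To increase the edge count while preserving this structural obstruction, I would transfer $s=\lfloor \epsilon n\rfloor$ vertices from $B$ to $A$ for a small constant $\epsilon=\epsilon(C)>0$, producing $G$ with $|A'|=t-1+s$, $|B'|=n-t+2-s$, still sharing only $v$. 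A direct binomial-coefficient computation yields
\begin{align*}
e(G)-ex(n,C_t) = s\bigl(2t-n-3+s\bigr) = \Theta(\epsilon n^2).
\end{align*}
The cut-vertex property is preserved, so every cycle of $G(p)$ lies inside either $K_{A'}(p)$ or $K_{B'}(p)$. The smaller clique still has fewer than $t$ vertices; the larger side $K_{A'}(p)\sim G(|A'|,C/n)$ is a sparse random graph in the constant-average-degree regime, whose longest cycle is known to have length at most $(1-Q(c)+o(1))|A'|$ w.h.p., where $c=C|A'|/n$ and $Q(c)\asymp e^{-c}$ for $c$ large, by the classical bounds of Frieze~\cite{FriezeLongCycles} and \L uczak~\cite{LuczakCycles} cited in the introduction. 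A short calculation then shows $\epsilon$ can be taken small enough relative to $C$ to force this quantity strictly below $t$, ensuring $K_{A'}(p)$ is $C_t$-free.

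For the remaining narrow ranges ($t$ odd with $n/2\leq t<\tfrac12(n+3)$, and $t$ even with $t<\tfrac12(n+3)$) I would build analogous constructions based on the appropriate extremal family: the balanced bipartite graph $K_{\lfloor n/2\rfloor,\lceil n/2\rceil}$ in the odd sub-range (bipartite, hence automatically $C_t$-free for every odd $t$, even after random sparsification, with further edges addable by sliding balance), and a disjoint union of cliques of size $t-1$ augmented by a star from a single ``portal'' vertex into one further clique in the even sub-range. In both situations the strategy is the same: begin with a deterministic structure that provably contains no $C_t$, then argue that any additional edges attached remain too few or too structurally concentrated to close a $C_t$ after random sparsification.

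I expect the main obstacle to be the small even case. There the extremal number is itself only $O(nt)=o(n^2)$, so producing $\Omega(n^2)$ extra edges without creating $C_t$ in $G(p)$ demands attaching a substantial auxiliary structure between the small cliques. The analysis will reduce to a first-moment computation: the only way a $C_t$ could be closed in $G(p)$ is through two surviving portal edges plus a long path inside one $(t-1)$-clique, which would require a near-Hamiltonian path in the sparse random graph $G(t-1,C/n)$, an event whose probability is negligible since the Hamilton-path threshold is $p\asymp (\log t)/t\gg C/n$.
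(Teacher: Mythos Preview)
Your bowtie construction for $t\geq \tfrac12(n+3)$ is essentially the paper's approach; the paper makes the same vertex transfer and argues via isolated vertices in $K_{A'}(p)$ rather than longest-cycle bounds, but both work. In fact the paper uses this single bowtie construction uniformly for \emph{all} $t=\Theta(n)$ in the claim (even $t$, and odd $t\geq n/2$), not just for $t\geq\tfrac12(n+3)$, which already covers your ``narrow odd sub-range'' without any separate argument.

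There are two genuine gaps in your remaining cases. First, for odd $t$ with $n/2\leq t<\tfrac12(n+3)$, your bipartite idea cannot work: the balanced complete bipartite graph has exactly $\lfloor n^2/4\rfloor = ex(n,C_t)$ edges, and ``sliding balance'' to $K_{a,n-a}$ with $a\neq\lfloor n/2\rfloor$ only \emph{decreases} the edge count. There is no way to exceed $ex(n,C_t)$ while remaining bipartite. The bowtie with shifted sizes handles this range directly, as the paper does.

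Second, and more seriously, your treatment of even $t=o(n)$ does not achieve the stated goal. The disjoint-$(t-1)$-cliques-plus-portal graph has only $O(nt)=o(n^2)$ edges in total, so it cannot witness $e(G)\geq ex(n,C_t)+\Omega(n^2)$. You acknowledge that one must attach a ``substantial auxiliary structure'' carrying $\Theta(n^2)$ edges, but once such a structure is present, your first-moment reduction (``the only way a $C_t$ could be closed is through two portal edges plus a near-Hamiltonian path inside one clique'') is no longer valid: a dense auxiliary structure between cliques creates many other potential $C_t$'s, and controlling them would require a genuinely new argument. The paper sidesteps this entirely with a much simpler idea: take $G=G(n,p_0)$ for a suitable $p_0=o(1)$ with $p_0 n^2\gg ex(n,C_t)$; then $G(p)\sim G(n,p_0 p)$ has edge probability $o(1/n)$ and is w.h.p.\ a forest, hence trivially $C_t$-free.
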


\begin{proof}
	Assume first that $t = o(n)$ and even, and take a graph $G = G(n,p_0)$ for some $p_0 = o(1)$.
	Note that $\mathbb E[e(G)] = \Theta(n^2 p_0) \gg ex(n, C_t)$ (recall that $ex(n, C_t) = O\left(n^{1+2/t} \right)$ in this case), and furthermore, taking $G(p)$ with $p = \tfrac Cn$ for some constant $C>0$ is equivalent to sampling a graph from $G(n, p_0 p)$.
	Having $p_0 p = o(\tfrac1n)$, we get that $G(p)$ is \whp acyclic, and in particular that taking only $o(n^2)$ more than the extremal number is not enough in this case.
	
	Assume now that $t = \Theta(n)$ is either even, or odd satisfying $t \geq \tfrac n2$.
	Let $a$ be a constant such that $t \ge an$ (even or odd). 
	It is known (and an easy exercise) that for any constant $C>0$, there exists some $\alpha \coloneqq \alpha(C) > 0$ such that \whp for any $an \leq t_0 \leq n$ the graph $G(t_0, p)$ has \whp at least $\alpha n$ isolated vertices, where $p = \tfrac Cn$.
	Now, let $an \leq t < n$, let $0<\varepsilon <\alpha$ be some constant, and take $G$ to be the graph on $n$ vertices consisting of two cliques sharing exactly one vertex, one of size $(1+\varepsilon)t$, denoted by $K^1$, and the other of size $n - (1+\varepsilon)t +1$, denoted by $K^2$.
	Now take $G(p)$ and look at a subgraph of it that is induced by the vertices of $K^1$. This subgraph is exactly $G((1+\varepsilon)t,p)$ and thus \whp $G(p)[K^1]$ contains at least $\alpha n$ isolated vertices.
	Therefore, \whp $G(p)[K^1]$ does not contain any cycle of length $(1+\varepsilon)t - \alpha n < t$ or larger, and in particular $G(p)$ does not contain any cycle of length $t$ or larger.
	On the other hand, $e(G) = \binom{(1+\varepsilon)t}2 + \binom{n - (1+\varepsilon)t + 1}2 \geq \binom{t-1}2 + \binom{n-t+2}2 + \tfrac\varepsilon4 n^2 = ex(n, C_t) + \tfrac\varepsilon4 n^2$.
	Note that here we look at a graph that can be cunstructed by taking the extremal example of Woodall (see \cite{Woodall}), move $\varepsilon n$ vertice from a smaller clique to a largest clique, and adjust all relevant edges accordingly.
\end{proof}

\subsection{Robustness for odd cycles}

In this subsection we discuss the supplemental part of \Cref{cl:TightEven}, where we prove a tight robustness result for odd cycles shorter than $\frac n2$. 

\begin{proof}[Proof of \Cref{cl:TightOdd}.]
	Let $0 < \varepsilon \le \min\left\{\tfrac \beta{1000}, \tfrac1{1105^2} \right\}$, and let $k_0 = \left\lceil\frac {2}{\varepsilon^2} \right\rceil$. 
	Let $n_0,K_0$ be as given in the Regularity Lemma (\Cref{thm:RegularityLemma}).
	Let $t \in [\tfrac{C_1}{\log(1/\beta)}\log n, \left(\tfrac12 - \beta \right)n]$ be odd.
	Let $G$ be a graph on $n \geq n_0$ vertices with $e(G) \geq ex(n, C_t) + \frac 1p \cdot f(n) = \floor{\tfrac14 n^2} + \frac 1p \cdot f(n)$, where $f(n)$ is a monotone increasing function tending to infinity with $n$, and assume that $f(n)=o(n)$.
	By \cref{thm:RegularityLemma} there exists an $\varepsilon$-regular partition $\Pi = (V_1, \ldots, V_k)$ of  $V(G)$, for some $k_0 \le k \le K_0$, such that $||V_i| - |V_j|| \leq 1$ for every $i,j \in [k]$.
	Let $\rho = 10\varepsilon$ and let $R \coloneqq R(G, \Pi, \rho, \varepsilon)$ be the reduced graph (as in \Cref{def:RGraph}).
	We separate the proof into two cases, by the number of edges in $R$.
	
	\textbf{Case 1:} Assume that $e(R)> \frac 14k^2$, then by \Cref{thm:woodall} $R$ contains a cycle of an odd length $b=\floor{\left(\tfrac tn + \beta \right)k}_{odd}$, and also a triangle.
	Let $C \coloneqq C(\varepsilon)$ be as given in \Cref{cl:EpsPptyRobust}, look at the graph $G(p)$ for $p \geq \tfrac Cn$, and consider the $\varepsilon$-graph $S\coloneqq S(G(p), \Pi, \varepsilon)$.
	Recall that, by \Cref{cor:edgesInSgraphRobust}, \whp $R\subseteq S$.
	Let $C_1$ be the absolute constant from \Cref{lem:key}.
	If we have $t \in [\tfrac{C_1}{\log(1/\beta)}\log n, \frac 2k n]$, then we look at a triangle in $S$ and by \Cref{lem:key} we get that \whp cycles of all lengths in $[\tfrac{C_1}{\log(1/\beta)}\log n, \frac 2k n]$ in $G(p)$, and in particular a cycle of length $t$.
	For larger values of $t$ we consider a cycle of length $b$ in $S$.
	Using \Cref{lem:key}, as $b(1-\delta)\frac nk>t$ (with $\delta = 48\varepsilon$, as given in \Cref{lem:key}), we get that \whp $G(p)$ contains a cycle of length $\ell$, for any $\ell \in \left[\frac 2k n, b(1-\delta)\frac nk \right]$, and in particular a cycle of length $t$.
	
	\textbf{Case 2:} Assume now that $e(R) \leq \tfrac 14 k^2$.
	By following carefully the calculation in the proof of \Cref{lem:EdgeRGraphRbstns}  we also have $e(R) \ge \left(\tfrac14 - 6\varepsilon \right)k^2$.
	In addition, we may assume that $\delta(G) \ge \tfrac n5$.
	Indeed, otherwise we iteratively remove vertices from $G$ in the following way.
	Let $G_0 = G$.
	If for $i\ge 0$ we have $\delta(G_i) < \tfrac{v(G_i)}5$ then we define $G_{i+1} = G_i - v_i$ for some $v_i \in V(G_i)$ with $d_{G_i}(v_i) < \tfrac{v(G_i)}5$.
	Let $i_0$ be minimal such that $\delta(G_{i_0}) \ge \tfrac{v(G_{i_0})}5$.
	Let $\varepsilon' = \tfrac95\beta$ and denote $n' = \ceil{(1-\varepsilon')n}$.
	If $v(G_{i_0})\ge n'$, then denote $G' = G_{i_0}$ and consider $G'$ instead of $G$, as we still have $e(G') \ge \tfrac14(n')^2 + \frac 1p f(n')$.
	Otherwise, let $i_1$ be such that $v(G_{i_1}) = n'$, and denote $G'' = G_{i_1}$.
	Note that now we have $e(G'') >\frac 14n^2-\frac {n}5\cdot \varepsilon'n \ge \tfrac14 (n')^2 + \beta \binom{n'}2$.
	By \Cref{thm:mainRobustness} there exists $C' > 0$ such that for $p \geq \tfrac{C'}{n'}$ \whp the graph $G''(p)$ contains an odd cycle of length $t$ for any $\tfrac{C_1}{\log(1/\beta)}\log n' \le t\le \tfrac12 n'$.
	Taking $C = \frac {C'}{1-\varepsilon'}$ so that $p\geq \frac Cn$, we get that, in particular, \whp the graph $G(p)$ contains an odd cycle of length $t$ for any $\tfrac{C_1}{\log(1/\beta)}\log n \le t \le \left(\tfrac12 - \beta \right)n$ (as $\log n>\log n'$ and $(\frac 12-\beta)n<\frac 12 n'$).
	Hence, from now on we assume that $\delta(G) \ge \tfrac n5$, since otherwise we can consider $G'$ instead of $G$. 
	We now further separate this case into two sub-cases, by the structure of the reduced graph $R$.
	We say that a graph $H$ on $h$ vertices is $\eta$-far from being bipartite if at least $\eta h^2$ edges must be removed from $H$ in order to make it bipartite.
	Otherwise, we say that $H$ is  $\eta$-close to being bipartite.
	Take $\eta=2\varepsilon$.
	
	\textbf{Subcase 2.1:} Assume that $R$ is $\eta$-close to being bipartite, and recall that $\eta = 2\varepsilon$.
	Let $A \subset V(G)$ be such that $[A, A^c]$ is a max-cut in $G$.
	Again, by following carefully the calculation in the proof of \Cref{lem:EdgeRGraphRbstns} we get that $e_G(A,A^c) \ge \left(\tfrac14 - 6\varepsilon -\eta\right)n^2$, and thus $|A|, |A^c| \ge \left(\tfrac12 - \sqrt{6\varepsilon+\eta} \right)n = \left(\tfrac12 - \sqrt{8\varepsilon} \right)n$. 
	Recall that $e(G) \ge \lfloor \tfrac14 n^2 \rfloor + \frac {f(n)}{p}$, so w.l.o.g.\ we have $e(A) \geq \frac {f(n)}{2p}= \omega\left( \frac 1p \right)$.
	In fact, this is the only part of the proof where we use the assumption about $G$ having at least $\omega\left(\tfrac 1p \right)$ extra edges above the Tur\'{a}n number for an odd cycle.
	To obtain $G(p)$ we first note that $G(p) \supseteq G[A](p) \cup G[A,A^c](p)$.
	Furthermore, we expose the edges of $G[A,A^c](p)$ in three stages.
	We start with the edges inside $A$, and we show that \whp $G[A](p)$ contains a matching of size $\omega(1)$.
	Indeed, let $m$ be the size of a maximal matching one can find in $G[A](p)$.
	Then we have
	\begin{align*}
	\mathbb P[\text{maximal matching in } G[A](p) \text{ is of size at most } m] \le \sum_{i=0}^m \binom{e_G(A)}{i} p^i (1-p)^{e_G(A) - 2in},
	\end{align*}
	where each summand bounds the probability of having a maximal matching of size $i$, by considering the probability of having $i$ edges in $G[A](p)$, and non of the edges that share no vertex with this set of $i$ edges (as this is a mximal matching).
	As there are at least $e_G(A) - |A|\cdot 2i \ge e_G(A) - 2in$ such edges, we get this bound.
	Now, considering, say, $m = \sqrt{f(n)}$ we get
	\[\mathbb P[\text{maximal matching in } G[A](p) \text{ is of size at most } m] = o(1). \]
	Hence, \whp $m \geq \sqrt{f(n)}$ holds.
	Let $M$ be a matching in $G[A]$ of size $\floor{\sqrt{f(n)}}$.
	We now expose the edges of $G[A,A^c]$ in three stages.
	Let $p_1$ be such that $(1 - p_1)^3 = 1- p$, i.e., $p_1 = 1 - (1 - p)^{\frac13} \ge \tfrac{c_1}{n}$ for some constant $c_1 > 0$.
	Note that $G[A, A^c](p)$ is the same as taking $G_1 \cup G_2 \cup G_3$ where $G_i = G[A,A^c](p_1)$ for each $i=1,2,3$, independently.
	Recall that $[A,A^c]$ is a max-cut, and that $\delta(G) \ge \tfrac n5$, so we have that $d(v,A^c) \ge \tfrac n{10}$ for every $v\in A$.
	Moreover, at most $4\sqrt{\varepsilon}n$ vertices in $A^c$ have less than $\left(\tfrac12 - 3\sqrt{\varepsilon} \right)n$ neighbors in $A$.
	Indeed, if there are $x$ vertices in $A^c$ with less than $\left(\tfrac12 - 3\sqrt{\varepsilon} \right)n$ neighbors in $A$, then
	\[\left(\tfrac14 - 8\varepsilon \right)n^2 \le e_G(A,A^c) = \sum_{u\in A^c}d(u,A) < x\left(\tfrac12 - 3\sqrt{\varepsilon} \right)n + \left(|A^c| - x \right)|A|. \]
	Recalling that both $|A|$ and $|A^c|$ are of size at least $\left(\tfrac12 - \sqrt{8\varepsilon} \right)n$, we get that $x \le 4\sqrt{\varepsilon}n$.
	Consider $G_1 = G[A,A^c](p_1)$, and let $uv$ be an edge in the matching $M$.
	For each of $u,v$ look at the set of their neighbors in $A^c$ with degree in $G$ at least $\left(\tfrac12 - 3\sqrt{\varepsilon} \right)n$ into $A$.
	We know that there are at least $\left(\tfrac1{10} - 4\sqrt{\varepsilon} \right)n$ such neighbors for each of $u,v$ and at least $\frac 12\left(\tfrac1{10} - 4\sqrt{\varepsilon} \right)n$ neighbors for each of $u,v$ such that these sets of neighbors are disjoint.
	Hence, the probability that, in $G_1$, each of $u,v$ has at least one neighbor in $A^c$ with degree at least $\left(\tfrac12 - 3\sqrt{\varepsilon} \right)n$ into $A$, where these neighbors of $u$ are disjoint from those of $v$, is at least $1 - (1-p_1)^{\frac 12\left(\frac1{10} - 4\sqrt{\varepsilon} \right)n}$.
	This creates a path on three edges in $G_1$, with endpoints in $A^c$ of high degree into $A$.
	
	We now find, w.h.p., a set of at least $f(n)^{1/4}$ such paths, with distinct endpoints, one by one.
	Let $M' \subseteq M$ be some proper subset of edges in the matching (might be empty), and assume that for every $e\in M'$ we have found in $G_1$ a path consisting of three edges such that $e$ is the middle edge, and the endpoints of this path in $A^c$, each has at least $\left(\tfrac12 - 3\sqrt{\varepsilon} \right)n$ neighbors into $A$. 
	Denote this set of paths by $M''$.
	Now take some edge $e' = uv \in M\setminus M'$.
	Then the probability that each of $u,v$ has, in $G_1$, at least one neighbor in $A^c$ with degree at least $\left(\tfrac12 - 3\sqrt{\varepsilon} \right)n$ into $A$, where both of these neighbors are distinct and are not contained in the vertices of $M''$, is at least $1 - (1-p_1)^{\frac 12\left(\frac1{10} - 4\sqrt{\varepsilon} \right)n-2|M'|}\ge 1 - (1-p_1)^{\frac 12\left(\frac1{10} - 4\sqrt{\varepsilon} \right)n-2|M|} \ge 1 - e^{-a_1}$ for some constant $a _1> 0$.
	In total the probability that, in $G_1$, both $u$ and $v$ have at least one such neighbor (both distinct) in $A^c$ is at least $q_1\coloneqq(1-e^{-a_1})^2$.
	If we can find such a path for an edge $e' = uv \in M\setminus M'$, then we update $M'$ to contain also $e'$, and $M''$ to contain also this path, and we repeat this with a new edge of $M\setminus M'$.
	Let $B$ be the event that we succeed only at most $m_1$ times in $G_1$ (i.e., that starting with $M' = \emptyset$, we end with $|M'| \le m_1$).
	Assuming $m_1 \leq f(n)^{1/4}$, we get
	\[\mathbb P[B] = \sum_{i=0}^{m_1} \binom{|M|}{i} q_1^i (1-q_1)^{|M| - i}  = o(1) .\]
	Hence, \whp we have $m_1\ge f(n)^{1/4}$.
	That is, there are at least $f(n)^{1/4}$ edges of $M$ where, in $G_1$, each is the middle edge of a $P_3$-copy with endpoints which have at least $\left(\tfrac12 - 3\sqrt{\varepsilon} \right)n$ neighbors in $A$, and all of the endpoints are distinct.
	Denote by $M_1^*$ this set of $P_3$-copies in $G_1$, so we have $|M_1^*| \ge (f(n))^{1/4} = \omega(1)$.
	Denote by $M_1$ a subset of such $P_3$-copies of $M_1^*$ of size  $\floor{(f(n))^{1/4}}$ (note that this is smaller than $\varepsilon n$).
	Let $U$ be  the set of remaining vertices, i.e., $U = V(G)\setminus V(M_1)$, and hence $|U| \ge (1-4\varepsilon)n$.
	We further denote $U_1 = U\cap A$ and $U_2 = U\cap A^c$.
	We continue to the second exposure.
	We look at the graph $G_2 = G[A,A^c](p_1)$ and focus on $G_2[U_1, U_2]$.
	Since $G[A, A^c]$, as a bipartite graph, is missing at most $8\varepsilon n^2$ edges,  we get that \whp the pair $(A, A^c)$ has the ($5\sqrt{\varepsilon}$)-property in $G_2$, and in particular the pair $(U_1, U_2)$ has, w.h.p., the $\varepsilon''$-property in $G_2$, for some $\varepsilon''>0$ satisfying $\varepsilon'' \left(\left(\tfrac12 - \sqrt{8\varepsilon} \right)n - 2\floor{f(n)^{1/4}}\right) \le 5\sqrt{\varepsilon}\left(\tfrac12 - \sqrt{8\varepsilon} \right)n$ (more precisely, $\varepsilon''\cdot \min(|U|_1, |U_2|) = 5\sqrt{\varepsilon}\cdot\min(|A|, |A^c|)$).
	Hence, using either \Cref{prop:LargeTreeEmbd} or \Cref{prop:TreeEmbd}, we embed a copy of $T^{(r,h)}_\ell$ in $G_2[U_1, U_2]$, where $h=\ceil{\tfrac{\log (5\sqrt{\varepsilon}(1/2-\sqrt{8\varepsilon})n)}{\log r}}$, $\ell = t - 5 - 2h$, and the value of $r$ is determined by the value of $t$ in the following way.
	Take $C_1$ to be the absolute constant from \Cref{lem:key}.
	If $t\in [\tfrac{C_1}{\log(1/\beta)}\log n, 5\sqrt{\varepsilon}(1-2\sqrt{8\varepsilon})n]$, then we set $r = \floor{\tfrac1{16\cdot5\sqrt{\varepsilon}}} - 2$ and use \Cref{prop:TreeEmbd} to embed a copy of $T^{(r,h)}_\ell$, and if $t\in [5\sqrt{\varepsilon}(1-2\sqrt{8\varepsilon})n, \tfrac12 n]$ then we set $r=2$ and use \Cref{prop:LargeTreeEmbd} to embed a copy of $T^{(r,h)}_\ell$.
	(Note that we embed a tree that helps us create a cycle of an odd length up to $\tfrac12 n$, even though we only need it to be of length up to $\left(\tfrac12 - \beta \right)n$.
	We do this to ensure that eventually we get a cycle of length $\left(\tfrac12 - \beta \right)n$ even when looking at $G'$, which have $(1 - \varepsilon')n$ vertices, instead of $G$, as mentioned at the beginning of Case 2.)
	In either case we embed a $T^{(r,h)}_\ell$-copy with both leaf-sets, denoted by $L_1, L_2$, in $U_1$.
	Recall that by the definition of the tree $T^{(r,h)}_\ell$ we further know that $|L_1|, |L_2| \ge 5\sqrt{\varepsilon}\left(\tfrac12 - \sqrt{8\varepsilon} \right)n$.
	We are left with the third and last exposure.
	Let $G_3 = G[A, A^c](p_1)$.
	Let $P = (x_0, x_1, x_2, x_3)$ be some $P_3$-copy in $M_1$.
	Recall that both endpoints of $P$, i.e., $x_0, x_3$, miss at most $4\sqrt{\varepsilon}n$ vertices in $G[A]$, so $d(x_i, L_j) \ge 5\sqrt{\varepsilon}\left(\tfrac12 - \sqrt{8\varepsilon} \right)n- 4\sqrt{\varepsilon}n \ge \sqrt{\varepsilon}n$, for $i=0,3$ and $j=1,2$.
	Similarly to a previous argument, we get that the probability that in $G_3$ both $x_0$ has a neighbor in $L_1$ and $x_3$ has a neighbor in $L_2$ is at least $(1 - e^{-a_2})^2$, for some constant $a_2 > 0$.
	Note further that, as the endpoints of the $P_3$-copies in $M_1$ are all distinct, these experiments are all independent.
	Hence, in total, we get that \whp there exists some $P_3$-copy in $M_1$ which both its endpoints have neighbors in $L_1$ and $L_2$, one in each.	
	Note that $G(p) \supseteq G[A](p) \cup G_1 \cup G_2 \cup G_3$, so in particular, we get that \whp $G(p)$ contains a cycle of length $t$.
	
	\textbf{Subcase 2.2:}
	Assume now that $R$ is $\eta$-far from being bipartite, so in particular non-bipartite.
	We will show that in this case $R$ contains an odd cycle of length at least $(\frac 12-130\varepsilon)k$, and an odd cycle of length at most $61$.
	Then, we will deduce the likely existence of the desired cycle in $G$. 
	
	The first step will be to get a subgraph of $R$ that has a large minimum degree.
	We iteratively remove vertices from $R$ with degree less than $\tfrac k{10}$.
	Similarly to the argument at the beginning of Case~2, if the process has not stopped after at most $16\varepsilon k$ steps, then we get a graph $R''$ on $k' = (1 - 16\varepsilon)k$ vertices and at least $e(R) - \tfrac{8\varepsilon}{5}k^2 \ge \tfrac14 (k')^2+1$ edges, so by \Cref{thm:woodall} it contains all  cycles of lengths in  $[3,\frac 12(k'+3)]$, and in particular all  cycles of odd lengths in  $[3,(\frac 12-8\varepsilon)k]$, so we proceed as in Case 1.
	So we may assume that this process ends after at most $16\varepsilon k$ steps, with a graph denoted $R'$, on $k'\geq (1 - 16\varepsilon)k$ vertices, with $e(R')\geq (\frac 14-8\varepsilon)k'$ and $\delta(R')\ge \tfrac {k'}{10}$.
	As $\eta>\frac {8\varepsilon}{5}$, $R'$ is still non-bipartite.
	Hence we may assume that $R$ is a non-bipartite graph on $k$ vertices with minimum degree at least $\tfrac k{10}$, or otherwise we consider $R'$ instead.
	
	We next use the following lemma, which we prove later.
	\begin{lemma}\label{lem:OddCycleS}
		Let $0 < \delta' < \tfrac15$ and let $R$ be a non-bipartite graph on $k$ vertices for $k \ge \tfrac 2{(\delta')^2}$, satisfying $e(R) \geq \left(\tfrac14 -\delta' \right)k^2$ and $\delta(R)\geq \frac k{10}$.
		Then $R$ contains an odd cycle of length at least $\left(\tfrac12 - 15\delta'\right)k$ and an odd cycle of length at most 61.
	\end{lemma}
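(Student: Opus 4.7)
The proof splits into two independent parts corresponding to the two conclusions. For the short odd cycle (length at most $61$), I plan a standard BFS argument using only the minimum degree hypothesis. Root a BFS tree at any vertex and denote its levels by $L_0, L_1, \ldots$. If the odd girth of $R$ equals $2m+1$, then no $L_i$ with $i < m$ contains an internal edge, since any such edge would produce, by lifting along tree paths to the common ancestor, an odd cycle of length at most $2i+1 < 2m+1$. Consequently for every $1 \le i \le m-2$ and every $u \in L_i$, all neighbors of $u$ lie in $L_{i-1} \cup L_{i+1}$, so $|L_{i-1}|+|L_{i+1}| \ge d(u) \ge k/10$. Summing this inequality over $i = 1,\ldots,m-2$ and using $\sum_j |L_j| \le k$ gives $(m-2)\cdot k/10 \le 2k$, i.e.\ $m \le 22$; hence the odd girth is at most $45 < 61$.

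For the long odd cycle (length at least $(\tfrac12-15\delta')k$), I first extract a long cycle $C_1$ in $R$ via a cycle version of the Erd\H{o}s--Gallai theorem: $e(R) \ge (\tfrac14-\delta')k^2$ ensures a cycle of length $\ell_1 \ge (\tfrac12-2\delta')k$. If $\ell_1$ is odd we are done, so assume $\ell_1$ is even and let $(A_1,B_1)$ be the natural $2$-coloring of $V(C_1)$ induced by $C_1$. The plan is to modify $C_1$ locally so as to flip its parity while losing only a small amount of length. Concretely, if some vertex $v \notin V(C_1)$ has neighbors $a \in A_1$ and $b \in B_1$ with cyclic distance $d := d_{C_1}(a,b) \le 13\delta' k + 2$, then replacing the longer $C_1$-arc between $a$ and $b$ by the length-$2$ path $a\,v\,b$ produces a cycle of length $\ell_1 - d + 2$, which is odd (since $\ell_1$ is even and $d$ is odd, as $a, b$ lie in opposite color classes) and of length at least $\ell_1 - 13\delta' k \ge (\tfrac12-15\delta')k$.

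If no such $v$ exists, then for every $v \notin V(C_1)$ either all $C_1$-neighbors of $v$ lie in a single class among $A_1, B_1$ or the closest cross-class pair among them is at $C_1$-distance more than $13\delta' k + 2$. Classifying $V \setminus V(C_1)$ by which class it ``prefers'' extends $(A_1,B_1)$ to a bipartition $(A,B)$ of $V(R)$, and the non-bipartiteness of $R$ then supplies an edge $e^*$ with both endpoints in $A$ or both in $B$. I plan to combine $e^*$ with a bipartite path through $(A,B)$ -- assembled from arcs of $C_1$ together with the dense bipartite edges between $V \setminus V(C_1)$ and $V(C_1)$ guaranteed by the edge count and minimum degree -- to produce, via the same arc-substitution idea, an odd cycle of length at least $(\tfrac12-15\delta')k$. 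The main obstacle is precisely this final case analysis: controlling the possible locations of $e^*$ and tracking parity through the arcs of $C_1$ so as to ensure that the resulting odd cycle is sufficiently long in every sub-case, possibly requiring a further split depending on whether $e^*$ has endpoints inside $V(C_1)$, inside $V \setminus V(C_1)$, or one of each.
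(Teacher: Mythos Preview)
Your BFS argument for the short odd cycle is correct (once you root at a vertex of a shortest odd cycle, so that levels $L_0,\dots,L_m$ are nonempty); it even gives the better bound $45$. This differs from the paper, which first reduces to the case $\kappa(R)\ge\delta'k$ via a Woodall-based claim and then uses Moon's diameter bound $\mathrm{diam}(R)\le 3k/(\delta(R)+1)\le 30$ to shortcut a shortest odd cycle. Your route is more self-contained here.

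The long-odd-cycle part has a genuine gap. Your arc-substitution scheme only treats external vertices $v\notin V(C_1)$, but the parity-violating edge $e^*$ may well be a \emph{chord} of $C_1$ joining two vertices in the same class $A_1$. If the two endpoints of such a chord sit near antipodal positions on $C_1$, both arcs have length about $\ell_1/2$, and the best odd cycle you extract from $C_1\cup e^*$ has length roughly $\ell_1/2+1\approx(\tfrac14-\delta')k$, far short of $(\tfrac12-15\delta')k$. Your final paragraph flags this sub-case but offers no mechanism to recover the missing length; ``combining $e^*$ with a bipartite path assembled from arcs of $C_1$ and outside edges'' is not a plan, since nothing prevents all available structure from being concentrated on one side of the chord.

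The paper's approach avoids this difficulty by decoupling ``length'' from ``parity''. It first reduces (via a short Woodall computation) to the case $\kappa(R)\ge\delta'k$. It then builds, from the short odd cycle $C_0$, a \emph{medium} odd cycle $C_1$ of length in $[\tfrac12\delta'k,\tfrac12\delta'k+120]$ (using $\delta(R)\ge k/10$ to grow a path of prescribed length from an edge of $C_0$, and the diameter bound to close it up, then choosing the arc of $C_0$ of the right parity). Next it finds, by Erd\H os--Gallai, a long cycle $C_2$ of length $\ge(\tfrac12-2\delta')k$ in $R\setminus V(C_1)$. If $C_2$ is even, Menger (using $\kappa(R)\ge\delta'k\ge |V(C_1)|$) yields $|V(C_1)|$ vertex-disjoint $C_1$--$C_2$ paths; two of them land within $C_2$-distance $\le 2/\delta'$, and routing through the correct arc of the odd $C_1$ flips the parity while discarding at most $2/\delta'$ vertices of $C_2$. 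The point is that $C_1$ serves purely as a parity switch of bounded cost, so the resulting odd cycle retains almost all of $C_2$'s length. Your scheme lacks such a dedicated parity switch disjoint from the long cycle, which is why the middle-chord case defeats it.
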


We will show now how to use \Cref{lem:OddCycleS} to complete the proof of \Cref{cl:TightOdd}.
By \Cref{lem:OddCycleS} (for $\delta'=8\varepsilon$), we have in $R'$, and thus in $R$, an odd cycle of length $b_0\in [3,61]$, and an odd cycle of length $b_1\geq \left(\tfrac12 - 120\varepsilon\right)k'\geq \left(\tfrac12 - 130\varepsilon\right)k$.
Similarly to Case 1, let $C \coloneqq C(\varepsilon)$ be as given in \Cref{cl:EpsPptyRobust}, look at the graph $G(p)$ for $p \geq \tfrac Cn$, and consider the $\varepsilon$-graph $S\coloneqq S(G(p), \Pi, \varepsilon)$.
Recall that, by \Cref{cor:edgesInSgraphRobust}, \whp $R\subseteq S$.
\Cref{lem:key} Item 2 and the cycle of length $b_0$ in $R$ give, w.h.p., cycles of all odd lengths  in $[\tfrac{C_1}{\log(1/\beta)}\log n, 3(1-48\varepsilon)\frac nk]$ in $G(p)$, where $C_1$ is the absolute constant from \Cref{lem:key}.
By \Cref{lem:key} Item 2, the cycle of length $b_1$ in $R$ gives, w.h.p., cycles of all odd lengths in $[3(1-48\varepsilon)\frac nk, \left(\tfrac12-200\varepsilon \right)n]$ in $G(p)$.
Recall that the graph we are looking at might be, instead of $G$, the graph $G'$ which has $(1-\varepsilon')n$ vertices.
As we have $\varepsilon \le \tfrac{\beta}{1000}$ and $\varepsilon' = \tfrac59 \beta$, we get $\left(\tfrac12 - 200\varepsilon \right)(1-\varepsilon')n \ge \left(\tfrac12 - \beta \right)n$.
Hence, altogether, we get that \whp $G(p)$ contains a cycle of length $t$, for any odd $t\in [\tfrac{C_1}{\log(1/\beta)}\log n, \left(\tfrac12-\beta \right)n]$.
\end{proof}

It is left to prove \Cref{lem:OddCycleS}.
For this proof we use two results.
The first is by Erd\H{o}s and Gallai \cite{ErdosGallai}, estimating the maximal number of edges in a graph containing no cycle of at least a certain length.

\begin{theorem}[\cite{ErdosGallai}, Theorem 2.7]\label{thm:ErdosGallaiCycles}
	Let $G$ be an $n$-vertex graph with more than $\left\lfloor\frac 12 (n-1) (t-1)\right\rfloor$ edges.
	Then $G$ contains a cycle of length at least $t$. 
\end{theorem}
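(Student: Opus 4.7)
The plan is to prove the statement by induction on $n$, and to reduce to a 2-connected host graph, where one can exploit minimum degree to extract a long cycle directly. The base cases $n < t$ are vacuous because $\binom{n}{2} \le \lfloor (n-1)(t-1)/2 \rfloor$ when $n \le t-1$, so no graph can meet the edge hypothesis. For the inductive step, split into three structural cases: $G$ is disconnected, $G$ is connected with a cut vertex, or $G$ is 2-connected.

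If $G$ is disconnected with components $C_1,\dots,C_s$ of sizes $n_1,\dots,n_s$ (and $s \ge 2$), suppose no component contains a cycle of length at least $t$; by the inductive hypothesis, $e(C_i) \le \lfloor (n_i-1)(t-1)/2 \rfloor$ for each $i$, and summing yields $e(G) \le \tfrac{1}{2}(t-1)(n-s) \le \lfloor (n-1)(t-1)/2 \rfloor$, contradicting the hypothesis. If $G$ is connected but not 2-connected, decompose it into its blocks $B_1,\dots,B_k$, each on fewer than $n$ vertices. Using the standard identity $\sum_i (|B_i|-1) = n-1$ (proved by induction on $k$ via the block-cut tree) together with $e(G) = \sum_i e(B_i)$, if no $B_i$ contained a cycle of length $\ge t$ then by induction $\sum_i e(B_i) \le \lfloor \tfrac{1}{2}(t-1)\sum_i (|B_i|-1)\rfloor = \lfloor (n-1)(t-1)/2 \rfloor$, again a contradiction. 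So one block does contain the required cycle.

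Finally, assume $G$ is 2-connected. If some vertex $v$ has $\deg(v) \le \lfloor (t-1)/2 \rfloor$, then a routine floor calculation (splitting on the parity of $t-1$) shows that $e(G-v) > \lfloor (n-2)(t-1)/2 \rfloor$, so induction gives the desired cycle inside $G-v$. Otherwise $\delta(G) \ge \lceil t/2 \rceil$, and we invoke Dirac's classical lemma: any 2-connected graph $H$ with $\delta(H) \ge k$ contains a cycle of length at least $\min(|V(H)|,2k)$. Since $2\lceil t/2 \rceil \ge t$ and the edge hypothesis forces $n \ge t$, this yields a cycle of length at least $t$.

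The main obstacle is Dirac's lemma itself. The standard proof takes a longest cycle $C$ in the 2-connected graph $H$; if $|C| < \min(|V(H)|,2\delta(H))$ then either a vertex $x \notin C$ has, by 2-connectivity, two internally vertex-disjoint paths to $C$ that can be spliced into $C$ to produce a longer cycle, or a rotation-extension argument along a longest path with both endpoints' neighbourhoods on $C$ forces a cycle of length at least $2\delta$. The other fiddly point is the parity bookkeeping when removing a low-degree vertex in the 2-connected case, but this reduces to checking the two residues of $t-1 \pmod 2$ separately.
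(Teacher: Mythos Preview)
The paper does not supply a proof of this statement at all; it is quoted from Erd\H{o}s and Gallai as a known external result and used only as a black box (in Step~III of the proof of \Cref{lem:OddCycleS}). So there is nothing in the paper to compare your argument against.

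That said, your proof is correct and is essentially the standard one. The reduction to the 2-connected case via the block decomposition and the identity $\sum_i(|B_i|-1)=n-1$ is clean, the floor arithmetic when stripping a vertex of degree at most $\lfloor(t-1)/2\rfloor$ checks out in both parities of $t-1$, and once $\delta(G)\ge\lceil t/2\rceil$ in a 2-connected graph the Dirac-type lemma (every 2-connected $H$ contains a cycle of length at least $\min(|V(H)|,2\delta(H))$) finishes the job. One small comment: your sketch of that lemma blends two distinct arguments; the tidiest version takes a longest path $P=v_0\cdots v_\ell$, notes that all neighbours of both endpoints lie on $P$, and uses the standard crossing-neighbour count to close a cycle of length at least $\min(\ell+1,2\delta)$, with 2-connectivity invoked only to guarantee $\ell+1=n$ when the cycle so obtained is shorter than $n$.
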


The second result we need is by Moon \cite{MoonDiameter} (see also \cite{ErdosDiameter}), regarding the diameter of a connected graph.
For a graph $G$, let $\text{diam}(G)=\max_{\{u,v\}}d(u,v)$, where $d(u,v)$ is the length of a shortest path between $u$ and $v$ in the graph (and is equal to $\infty$ if the graph is not connected and $u, v$ are in different connected components).

\begin{theorem}[\cite{ErdosDiameter}, Theorem 1; \cite{MoonDiameter}]\label{thm:Diam}
	Let $R$ be a connected graph on $k$ vertices with minimum degree $\delta(R) \ge 2$.
	Then
	\[\emph{diam}(R) \le \left\lceil \frac{3k}{\delta(R) + 1} \right\rceil - 1 \]
\end{theorem}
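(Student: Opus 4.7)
The plan is to prove this classical diameter bound via the standard ball-packing argument: pack pairwise disjoint closed neighborhoods along a geodesic realizing the diameter, then combine the lower bound $\delta(R)+1$ on each ball size with the obvious upper bound $k$ on the total number of vertices.

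First, let $d := \text{diam}(R)$ and fix a shortest path $P = v_0 v_1 \cdots v_d$ in $R$ between two vertices at distance $d$. I will consider the subsequence of vertices along $P$ taken at every third position, namely $v_0, v_3, v_6, \ldots, v_{3\lfloor d/3 \rfloor}$, together with their closed neighborhoods $N[v_{3i}] := \{v_{3i}\} \cup \Gamma_R(v_{3i})$. There are $\lfloor d/3 \rfloor + 1$ such vertices.

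The key claim, on which everything else rests, is that these closed neighborhoods are pairwise disjoint. Suppose, toward a contradiction, that for some $0 \le i < j \le \lfloor d/3 \rfloor$ there were $w \in N[v_{3i}] \cap N[v_{3j}]$. Then the walk $v_{3i} \to w \to v_{3j}$ has length at most two, so $d_R(v_{3i}, v_{3j}) \le 2$. However $P$ is a shortest path, so the distance between $v_{3i}$ and $v_{3j}$ in $R$ must equal the distance along $P$, namely $3(j-i) \ge 3$, a contradiction.

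Since every $|N[v_{3i}]| \ge \delta(R) + 1$, the disjointness yields
\[
(\lfloor d/3 \rfloor + 1)(\delta(R) + 1) \;\le\; k,
\]
so $\lfloor d/3 \rfloor + 1 \le k/(\delta(R)+1)$. Using $d \le 3 \lfloor d/3 \rfloor + 2$ this rearranges to $d + 1 \le 3k/(\delta(R)+1)$, and because $d+1$ is an integer we conclude $d + 1 \le \lceil 3k/(\delta(R)+1) \rceil$, i.e., $\text{diam}(R) \le \lceil 3k/(\delta(R)+1) \rceil - 1$, as required. The hypotheses of connectedness (so that $d$ is finite) and $\delta(R) \ge 2$ (ensuring the bound is meaningful) are used only through these bookkeeping steps; the only mildly delicate point is the final integer-rounding into ceiling form, and there is no substantial obstacle to the argument.
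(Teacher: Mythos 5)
The paper cites this theorem from Erd\H{o}s--Pach--Pollack--Tuza and Moon and gives no proof of its own, so there is no internal argument to compare against. Your ball-packing proof is the standard one and is correct: the closed neighborhoods of $v_0, v_3, v_6, \ldots$ along a geodesic are pairwise disjoint since a common element would give a shortcut of length $\le 2$ between path vertices at $P$-distance $\ge 3$, and the resulting count $(\lfloor d/3\rfloor+1)(\delta+1)\le k$ together with $d\le 3\lfloor d/3\rfloor+2$ indeed yields $d+1\le 3k/(\delta+1)$. One tiny remark on the last step: that inequality combined with integrality of $d+1$ actually gives the sharper bound $d+1\le\lfloor 3k/(\delta+1)\rfloor$; the ceiling form you write down follows a fortiori (and would already follow from $x\le\lceil x\rceil$ without invoking integrality at all), so the appeal to integer rounding is not load-bearing there, but the conclusion is of course still correct.
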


We further note the following.
\begin{claim}\label{cl:Rconnectivity}
	Let $\kappa(R)$ be the connectivity of the graph $R$ (the minimum number of vertices one must remove from $R$ in order to make it disconnected), and assume that $\kappa(R) = \kappa\leq \delta 'k$ and $e(R) \ge \left(\tfrac14 - \delta' \right)k^2$.
	Then $R$ contains cycles of all lengths between 3 and $\left(\tfrac12 -15\delta'\right)k$.
\end{claim}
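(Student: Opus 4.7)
The plan is to exploit the small vertex cut to decompose $R$ into two dense pieces and then locate the required cycles inside one of them. Let $S \subseteq V(R)$ be a minimum vertex cut, so $|S| = \kappa \leq \delta' k$, and write $V(R) \setminus S = A \sqcup B$ with $a := |A| \leq |B| =: b$ and no edges between $A$ and $B$; set $n_1 = a+\kappa$ and $n_2 = b+\kappa$, so $n_1 + n_2 = k + \kappa$. The inequality
\begin{align*}
e(R) \;\leq\; \binom{n_1}{2} + \binom{n_2}{2} - \binom{\kappa}{2}
\end{align*}
combined with $e(R) \geq (\tfrac14-\delta')k^2$ and $\kappa \leq \delta' k$ pins the configuration near one of two extremes: either $n_2$ is close to $k$ and the density of $R[B\cup S]$ is close to (and for $a$ bounded below, strictly above) $\tfrac12$, or else $n_1 \approx n_2 \approx (k+\kappa)/2$ and each of $R[A\cup S]$, $R[B\cup S]$ is close to a complete graph on its vertex set, missing only $O(\delta' k^2)$ edges.

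Fix a target length $\ell \in [3, (\tfrac12 - 15\delta')k]$. First I will show that in an intermediate regime---say $2\delta' k < a < 30\delta' k$---the induced subgraph $R[B\cup S]$ already has more than $\lfloor n_2^2/4\rfloor$ edges while also $n_2 \geq 2\ell - 3$, so the small-$t$ case of \Cref{thm:woodall} applied to $R[B\cup S]$ produces cycles of every length from $3$ up to $\tfrac12(n_2+3)$, including $\ell$. For $a > 30\delta' k$ the two sides are comparable in size, and the target length $\ell$ can exceed $\tfrac12(n_2 + 3)$; here the cycle must traverse both sides, and since $\kappa \geq 2$ in this regime (the balanced case forces $\kappa$ to be larger in view of the edge bound, or else both sides are separately near-cliques), we pick two vertices $u, v \in S$ and concatenate a $u$-$v$ path of length $\ell_A$ in $R[A\cup S]$ with a $u$-$v$ path of length $\ell-\ell_A$ in $R[B\cup S]$; the near-clique density on each side guarantees (essentially by Hamiltonian-connectedness of dense graphs, or equivalently by iterative application of the large-$t$ case of \Cref{thm:woodall}) that $u$-$v$ paths of every length between $1$ and the side's size exist. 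Finally, for $a \leq 2\delta' k$ the density of $R[B\cup S]$ is only marginally at $\tfrac12$; here we strip from $R[B\cup S]$ an iterative set of low-degree vertices---a double-counting argument using that only $O(\delta' k^2)$ edges are missing shows that at most $O(\delta' k)$ such vertices exist---leaving a subgraph $H$ on $|H| \geq (1-O(\delta'))k$ vertices with $e(H) > |H|^2/4$, to which we apply the small-$t$ case of \Cref{thm:woodall} and obtain cycles of every length up to $\tfrac12(|H|+3) \geq (\tfrac12-15\delta')k$.

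The main technical obstacle is the marginal density in $R[B\cup S]$ in the regime of very small $a$: the density sits exactly at the Woodall threshold of $\tfrac12$, and the degree-stripping step above is used to restore strict inequality on a large subgraph. A secondary obstacle is the intermediate regime, where the cycle must genuinely use both sides of the cut and the two path lengths $(\ell_A, \ell-\ell_A)$ must be realized simultaneously on the two sides; this forces us to verify that each near-clique side admits $u$-$v$ paths of every intermediate length, which holds because the density of each side is very close to $1$. The degenerate possibilities $\kappa \in \{0,1\}$ need separate attention since no cycle can traverse both sides through a single separator vertex, but in these cases one side must account for essentially all edges of $R$ and the pancyclicity argument on that near-clique side still supplies all required cycles.
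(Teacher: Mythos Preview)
Your plan is considerably more intricate than what the paper does, and it has a real gap. The paper's argument is a single step: take the \emph{larger} side $A$ of the cut (so $a=|A|\ge \tfrac12(k-\kappa)$), note from the ambient minimum-degree hypothesis $\delta(R)\ge k/10$ (carried over from the proof of \Cref{lem:OddCycleS} where this claim is invoked) that $a\le \tfrac{9}{10}k$, and then simply count
\[
e(A)\ \ge\ e(R)-\binom{k-a-\kappa}{2}-k\kappa\ \ge\ \binom{t-1}{2}+\binom{a-t+2}{2}+1
\]
for $t=(\tfrac12-15\delta')k$. Since $t\ge \tfrac12(a+3)$ in this range of $a$, Woodall's theorem applied to $R[A]$ yields cycles of every length from $3$ to $t$ inside $A$ alone. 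No case split, no traversal of the cut, no degree-stripping.

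The specific weak point in your plan is the regime you call ``large $a$'', where you propose to build the cycle from a $u$--$v$ path on each side for two cut vertices $u,v$. First, the assertion that ``each side is a near-clique'' only follows when $a\approx b$; for, say, $a=k/3$, $b=2k/3$, the edge bound does not force $R[A\cup S]$ to be anywhere near complete, and you cannot expect $u$--$v$ paths of arbitrary prescribed length there. Second, even in a genuine near-clique, the existence of $u$--$v$ paths of \emph{every} length between two \emph{specified} endpoints is not Woodall's theorem and would need a separate Hamiltonian-connectedness argument that you have not supplied. Third, your handling of $\kappa\in\{0,1\}$ claims that ``one side must account for essentially all edges'', which is false: with $\kappa=1$ and $a\approx b\approx k/2$, each side can be a near-clique carrying about $k^2/8$ edges. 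All of this is avoided by the paper's route, which never needs to cross the cut; you should abandon the three-regime analysis and apply Woodall directly to the larger side, exploiting the minimum-degree bound that is implicit in the setting.
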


\begin{proof}
	Let $\delta' > 0$.
	Let $Q\subseteq V(R)$ be such that $|Q| = \kappa$, $A \cup Q \cup B = V(R)$, and $e_R(A, B) = 0$.
	Denote $v(A) = a$ for some $a$ and assume w.l.o.g.\ that $a\ge \tfrac12(k - \kappa)$. Note that $|B|+\kappa\geq \frac k{10}$ (due to the minimum degree condition), and thus $a\leq \frac 9{10}k$.
	Then, as we have $e(R) \ge \left(\tfrac14 - \delta' \right)k^2$, $e(B)\leq \binom {k-a-\kappa}2$, and $e(Q) + e(Q, R\setminus Q)\le \binom{\kappa}{2} + \kappa(k - \kappa) \le k\kappa$, we get, for $t = \left(\tfrac12 - 15\delta' \right)k$,
	\[e(A) \geq e(R) - e(B) - e(Q) - e(Q, R\setminus Q) \ge \binom{t-1}2 + \binom{a-t+2}2+1. \]
	Hence, by \Cref{thm:woodall}, $G[A]$, and thus $G$, contain all odd cycles of lengths between 3 and $\left(\tfrac12 - 15\delta' \right)k$.
\end{proof}

\begin{proof}[Proof of \Cref{lem:OddCycleS}]	
	Let $\delta' > 0$.
	By \Cref{cl:Rconnectivity} we may assume that $\kappa(R) \geq \delta'k$ (as otherwise we are done).
	We find an odd cycle of length at most 61, and an odd cycle of length at least $\left(\tfrac12 - 3\delta'\right)k$ in $R$ in three steps.
	
	\textbf{Step I:} Find a short odd cycle in $R$.
	Let $C_0 = (v_1, \ldots, v_{2t+1}, v_1)$ be a shortest odd cycle in $R$, for some integer $t$.
	By \Cref{thm:Diam} there exists some path $\tilde{P}$ of length at most $\tfrac{3k}{\delta(R)} \le 30$ from $v_1$ to $v_{t+1}$.
	Then the union of $\tilde{P}$ with the part of $C_0$ from $v_1$ to $v_{t+1}$ of the right parity creates a cycle whose length is at most $t+31$, so by $C_0$ being of minumum length we get $2t+2 \le 61$.
	
	\textbf{Step II:} Find an odd cycle in $R$ of length in $[\frac12 \delta'k,\frac 12\delta'k+120]$.
	Let $(u, v)$ be an arbitrary edge of $C_0$, and denote $R' \coloneqq R \setminus( V(C_0) \setminus \{u, v \})$.
	We start by finding a path of length at least $\frac 12 \delta'k$ in $R'$ with endpoints $u, v$.
	Note that we have $\kappa(R') \geq \kappa(R) - |V(C_0)| \geq \kappa(R) - 61>\frac 12\delta'k$ and $\delta(R') \geq \delta(R) - |V(C_0)| \geq \tfrac k{10} - 61$.
	Let $P$ be a path of length $\frac 12\delta'k$ in $R'$ starting at $v$ and avoiding $u$, and let $w\neq v$ be the other endpoint of this path.
	This path exists as $\delta(R' - \{u\}) \geq \tfrac k{10} - 62> \frac 12\delta'k$.
	Now look at the graph $R'' \coloneqq R' - V(P) + \{w\}$, and note that we have $\kappa(R'') \geq \kappa(R') - |V(P)| > 0$  and $\delta(R'') \geq \delta(R') - |V(P)| \geq \tfrac k{10} - 61 - \frac 12\delta'k >\frac k{20}$.
	By \Cref{thm:Diam} we get that $\emph{diam}(R'') \leq 60$ and in particular there exists a path $P'$ of length at most $60$ between $w$ and $u$. Then $P\cup P'$ is a path from $u$ to $v$ in $R'$ of length at least $\frac 12 \delta'k$ and at most $\frac 12\delta'k+60$.
	Recall that $C_0$ is of an odd length, so by concatenating the path $P \cup P'$ with a path between $u$ and $v$ on $C_0$ of the right parity, we get an odd cycle of length $\ell_1\in [\frac12 \delta'k,\frac 12\delta'k+120]$ in $R$.
	Denote this cycle by $C_1$.
	
	\textbf{Step III:} Find an odd cycle of length at least $(\frac12 - 3\delta') k$.
	Let $R^* \coloneqq R - V(C_1)$, and note that $e(R^*) \geq e(R) - \binom{\ell_1}2 - \ell_1(k - \ell_1) \geq \left(\tfrac14 -\tfrac32\delta' \right)k^2$.
	By \Cref{thm:ErdosGallaiCycles} we get that $R^*$ contains a cycle of length at least $\left(\tfrac12 - 2\delta' \right)k$.
	Denote this cycle by $C_2$.
	If $C_2$ is of an odd length then we are done.
	Assume that $C_2$ is of an even length, and recall that $\kappa(R) \geq \delta'k$.
	Using Menger's Theorem, we find $\ell_1$ pairwise vertex-disjoint paths from $C_1$ to $C_2$.
	There exist two such paths for which their endpoints in $C_2$ are of distance at most $\tfrac{|V(C_2)|}{|V(C_1)|} \leq \frac 2{\delta'}$ along $C_2$.
	Denote these two paths by $P_1$ and $P_2$, their endpoints in $C_2$ by $u_1, u_2$, respectively, and their endpoints in $C_1$ by $v_1, v_2$, respectively.
	Further denote by $P_3$ the long path between $u_1$ and $u_2$ on $C_2$, which has length at least $(\frac 12-2\delta')k-\frac 2{\delta'}\geq (\frac 12-3\delta')k$.
	Recall that $C_1$ is an odd cycle of length $\ell_1$, so by concatenating $P_1 \cup P_3 \cup P_2$ with a path between $v_1$ and $v_2$ on $C_1$ of the right parity, we get an odd cycle of length at least $\left(\tfrac12-3\delta' \right)k$.
\end{proof}

\section{Further results}\label{sec:extra}
\subsection{Expander graphs}\label{sec:applications}

As mentioned in the introduction, \Cref{thm:main} is also applicable to pseudo-random graphs.

For proving \Cref{cor:ndlambda}, we
use the Expander Mixing Lemma due to Alon and Chung \cite{ExpanderMixingLemma} cited below. 

\begin{theorem}[Expander Mixing Lemma \cite{ExpanderMixingLemma}]\label{thm:MixingLem}
	Let $G$ be a $d$-regular graph on $n$ vertices where $\lambda \le d$ is the second largest eigenvalue of its adjacency matrix, in absolute value.
	Then for any two disjoint subsets of vertices $A, B \subseteq V(G)$ we have
	\[ \left|e(A,B) - \tfrac dn |A||B| \right| \leq \lambda \sqrt{|A||B|}. \] 
\end{theorem}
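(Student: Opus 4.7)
The plan is to derive the inequality from the spectral decomposition of the adjacency matrix $A_G$ of $G$. The key observation is that, since $G$ is $d$-regular, the all-ones vector $\mathbf{1}$ is an eigenvector of $A_G$ with eigenvalue $d$; and since $A_G$ is a real symmetric matrix, it admits an orthonormal eigenbasis $v_1 = \mathbf{1}/\sqrt{n}, v_2,\dots,v_n$ with real eigenvalues $d = \lambda_1 \geq \lambda_2 \geq \dots \geq \lambda_n$, where $|\lambda_i| \leq \lambda$ for every $i \geq 2$ by the hypothesis on the second largest eigenvalue in absolute value.

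Next, I would rewrite $e(A,B)$ as a bilinear form in the characteristic vectors $\chi_A,\chi_B \in \{0,1\}^n$ of the sets $A$ and $B$. Because $A$ and $B$ are disjoint, no edge is counted twice, and one checks directly that $e(A,B) = \chi_A^{T} A_G \chi_B$. Expanding in the eigenbasis, $\chi_A = \sum_i \alpha_i v_i$ and $\chi_B = \sum_i \beta_i v_i$, so $e(A,B) = \sum_i \lambda_i \alpha_i \beta_i$. The coefficients along $v_1$ are $\alpha_1 = \chi_A \cdot v_1 = |A|/\sqrt{n}$ and $\beta_1 = |B|/\sqrt{n}$, so the $i=1$ contribution is exactly $\lambda_1 \alpha_1 \beta_1 = d|A||B|/n$, i.e.\ the main term that we wish to subtract off.

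It then remains to bound the tail $\sum_{i \geq 2} \lambda_i \alpha_i \beta_i$ by $\lambda \sqrt{|A||B|}$. Using $|\lambda_i| \leq \lambda$ for $i \geq 2$ and the Cauchy--Schwarz inequality, this tail is at most $\lambda \bigl(\sum_{i \geq 2} \alpha_i^2\bigr)^{1/2} \bigl(\sum_{i \geq 2} \beta_i^2\bigr)^{1/2}$. By Parseval's identity we have $\sum_{i \geq 2} \alpha_i^2 \leq \sum_i \alpha_i^2 = \|\chi_A\|^2 = |A|$, and similarly $\sum_{i \geq 2} \beta_i^2 \leq |B|$, which yields the claimed bound. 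The argument is essentially a routine application of the spectral theorem; the only genuinely substantive point is to isolate the contribution of the principal eigenvector $v_1$ (which gives the main term) before applying Cauchy--Schwarz to the remaining components.
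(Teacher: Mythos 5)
Your proof is correct; it is the standard spectral-decomposition argument for the Expander Mixing Lemma, and there is nothing to compare against in the paper, which cites this theorem from Alon and Chung without reproducing a proof. Every step is sound: the identity $e(A,B)=\chi_A^{T}A_G\chi_B$ relies correctly on the disjointness of $A$ and $B$ (so that the ordered-pair count equals the number of $A$--$B$ edges), the principal coefficients $\alpha_1=|A|/\sqrt n$ and $\beta_1=|B|/\sqrt n$ produce exactly the main term $d|A||B|/n$, and the Cauchy--Schwarz plus Parseval bound on the remaining coordinates gives $\lambda\sqrt{|A||B|}$ as claimed. One small remark: using $\sum_{i\ge 2}\alpha_i^2=|A|-|A|^2/n$ instead of the cruder $\le|A|$ would give the sharper bound $\lambda\sqrt{|A||B|(1-|A|/n)(1-|B|/n)}$ with no extra work, but the weaker bound is exactly what the statement asks for, so your argument is complete as written.
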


We now verify that for suitable values of $d$ and $\lambda$, an $(n,d,\lambda)$-graph is also upper-uniform.
Concretely, an $(n, d, \lambda)$-graph is $(p, \eta)$-upper-uniform with $p = \tfrac dn$ and $\eta \geq \tfrac \lambda d$, proving~\Cref{cor:ndlambda}.

\begin{proof}[Proof of \Cref{cor:ndlambda}]
	Take $\eta, n_0, \gamma$ as given by \Cref{thm:main}.
	Let $G$ be an $(n, d, \lambda)$-graph with $n \geq n_0$ and $\tfrac d\lambda \geq \tfrac1\eta$.
	By the Expander Mixing Lemma (\Cref{thm:MixingLem}), any two subsets $A, B \subseteq V(G)$ satisfy
	\[e(A,B) \leq \tfrac dn |A| |B| + \lambda \sqrt{|A||B|}. \]
	Recalling that $\tfrac d\lambda \geq \tfrac 1\eta$ we get
	\[ e(A,B) \leq \tfrac dn |A| |B| (1+\eta) \]
	for any two subsets $A, B \subseteq V(G)$ satisfying $|A|, |B| \geq \eta n$.
	It follows that $G$ is $(\tfrac dn, \eta)$-upper-uniform.
	Note, in addition, that an $(n,d,\lambda)$-graph $G$ also satisfies $e(G) = \tfrac12 dn \geq (1 - \beta/2)\tfrac dn \binom n2$.
	Hence, the statement follows by \Cref{thm:main}.
\end{proof}

\subsection{Ramsey-type properties of sparse random graphs}\label{sec:Ramsey}

The purpose of this section is to present some immediate applications of the Key Lemma for other extremal problems in random graphs.
Given a graph $G$ with some partition of its vertices $\Pi$, Lemma~\ref{lem:key} allows us to  convert a cycle in the corresponding $\varepsilon$-graph (or the reduced graph) to a cycle in $G$ of a prescribed length.
Here, we will show how to find such a monochromatic cycle in a multicolored $\varepsilon$-graph, and thus using the Key Lemma (Lemma \ref{lem:key}) to convert it to a monochromatic cycle in $G$ of a prescribed length.
The method is very similar to the one we used to prove the main theorem (\Cref{thm:main}).
For any coloring of $G$, we use the colorful version of the Sparse Regularity Lemma to obtain a partition $\Pi$ which is $\varepsilon$-regular in \textit{every} color.
We then look at the corresponding colored $\varepsilon$-graph and show that it has almost all edges present.
Then, we need to use a deterministic Ramsey-type argument to guarantee a long monochromatic cycle in an almost complete colored graph (the $\varepsilon$-graph).
In some cases, the Ramsey-type argument already exists and we just use it as a ``black box" for our purposes, and in other cases we prove a relevant Ramsey-type argument.
This type of arguments is not always trivial, in fact, in some cases it is not even known for $r$-colored complete graphs (for example, $r$-Ramsey numbers for even cycles).
Having this monochromatic cycle in the $\varepsilon$-graph allows us to complete the argument using the Key Lemma.  

For graphs $G$ and $H$, we write $G \to_r H$ if for every $r$-coloring of the edges of $G$, there is a monochromatic copy of $H$ ($G$ is $r$-Ramsey for $H$). The \textit{$r$-Ramsey number of $H$}, denoted by $R_r(H)$, is the minimum $n$ such that $K_n\to_r H$. 
Instead of asking for a monochromatic copy of a single graph $H$, we can
consider the Ramsey property with respect to some family of graphs $\mathcal H$ (that is, Ramsey-universality).
For a family of graphs $\mathcal H$, we write $G \to_r \mathcal H$ if for every $r$-coloring of the edges of $G$, there exists a color $i$, such that there is a monochromatic copy of every $H\in \mathcal H$ of this color ($G$ is \textit{$r$-Ramsey-Universal} for $\mathcal H$).

Below we list several typical properties of random graphs, where $G$ is the random graph $G(n,p)$ for $p=\Omega\left(\frac 1n\right)$, and $\mathcal H$ is a family of long cycles.
In some cases we give short proofs (using the Key Lemma) of previously known results (see \cite{SizeRamseyHKL,SizeRamseyAlexey}).
As we will show, the maximum length of a monochromatic cycle in $G(n,p)$ is \whp asymptotically equal to the one in a colored (almost) complete graph.
This gives another example of the \textit{transference principle} discussed in the introduction.
We would like to add that, as in the case of Tur\'an numbers, all of these results can also be proved for  upper-uniform graphs and for $(n,d,\lambda)$-graphs, with appropriate parameters. It is also worth mentioning that the theorems below give an immediate linear upper bound for the \textit{size-Ramsey number} of cycles. However, in this context, better upper bounds are already known (see \cite{SizeRamseyHKL,SizeRamseyAlexey}). 

\subsubsection*{Long monochromatic even cycles in 2-colorings of random graphs}
We first combine the Ramsey-type result on 2-colorings of almost complete graphs due to Letzter (Theorem~1.3 from \cite{Shoham}, see \Cref{lem:Shoham} here), with the Key Lemma, to obtain the asymptotically optimal result about random properties of random graphs with respect to even cycles.

\begin{theorem}\label{cor:RamseyRandom2}
	For every $\beta>0$ there exists $C > 0$ such that for $p\geq \frac Cn$,  \whp $G(n,p)\to_2 \mathcal C$, for $\mathcal C=\{C_t \mid A \log n \leq t \leq (\frac 23-\beta) n,\  t \text{ is even} \}$, where $A>0$ is an absolute constant.
	In particular, \whp $G(n,p)\to_2 C_{\lfloor(\frac 23-\beta) n\rfloor_{even}}$. 
\end{theorem}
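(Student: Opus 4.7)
\medskip

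\noindent\textbf{Proof plan for \Cref{cor:RamseyRandom2}.}
My strategy follows the same blueprint as the proof of \Cref{thm:main}, but replacing the extremal input (counting edges in the reduced graph) with a Ramsey-type input (finding a long monochromatic path in the reduced graph). Fix $\beta>0$, set $\varepsilon$ very small compared to $\beta$ (so that $\delta=48\varepsilon$ is negligible), and let $C=C(\beta)$ be large. As shown in the discussion following \Cref{thm:main}, for $p\geq C/n$ the graph $G=G(n,p)$ is whp $(p,\eta)$-upper-uniform and satisfies $e(G)\geq (1-\beta/10)p\binom{n}{2}$. Given any $2$-coloring $E(G)=E_1\cup E_2$, I would apply a colored version of the Sparse Regularity Lemma (\Cref{thm:srl}) to obtain a single partition $\Pi=(V_1,\dots,V_k)$ which is simultaneously $(\varepsilon,p)$-regular for both color graphs $G_1=(V,E_1)$ and $G_2=(V,E_2)$.

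Next, I would form the reduced graph $R_i=R(G_i,\Pi,\rho/2,\varepsilon,p)$ for $i=1,2$ with $\rho=10\varepsilon$, and consider the auxiliary $2$-colored graph $R$ on $[k]$ whose edge set is $E(R_1)\cup E(R_2)$, with each edge colored by a color class in which its $p$-density is at least $\rho/2$. A direct edge count, essentially identical to the proof of \Cref{lem:EdgesRGraph} but applied to $G$ rather than to a dense subgraph and accounting for the density threshold $\rho/2$, shows that $R$ has at least $(1-\beta/2)\binom{k}{2}$ edges; in other words, $R$ is almost complete.

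Now I would invoke the $2$-color Ramsey-type result for almost complete graphs cited from Letzter (Theorem 1.3 of \cite{Shoham}, cf.\ \Cref{lem:Shoham}), which guarantees, inside the $2$-colored almost complete graph $R$, a monochromatic path on $(\tfrac{2}{3}-\beta/4)k$ vertices, say in color $1$. By possibly dropping one vertex I may assume this path has odd length $b\geq (\tfrac{2}{3}-\beta/2)k$. Applying \Cref{cl:OneEdge} to each pair $(V_i,V_j)$ corresponding to an edge of this path (using that $(V_i,V_j)$ is $(\varepsilon,p)$-regular in $G_1$ with density at least $\rho/2$), I obtain that each such pair satisfies the $\varepsilon$-property in $G_1$. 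Feeding this path into the first item of the Key Lemma (\Cref{lem:key}) applied to $G_0=G_1$ yields monochromatic even cycles in color $1$ of all lengths in the interval $[\tfrac{C_1}{\log(1/\varepsilon)}\log n,(1-\delta)\tfrac{b+1}{k}n]$, which by the choice of $\varepsilon$ and $b$ covers the whole range $[A\log n,(\tfrac{2}{3}-\beta)n]$ of even integers, as required.

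The main obstacle I foresee is purely cosmetic rather than substantive: one must verify that the colored Sparse Regularity Lemma can indeed be applied to $G(n,p)$ (this is standard, by applying \Cref{thm:srl} to the edge-disjoint union $G_1\cup G_2$ treated as a colored graph, or by iterating the lemma) and that the $p$-density loss when restricting from $G$ to a single color class $G_i$ is absorbed by the slack between $\rho$ and $\rho/2$. Once these technical points are in place, the conversion of the monochromatic path in $R$ to a monochromatic even cycle in $G(n,p)$ is handled entirely by the Key Lemma with no additional work.
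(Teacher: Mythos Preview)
Your proposal is correct and follows essentially the same route as the paper: apply the colored Sparse Regularity Lemma (\Cref{thm:csrl}) to $G(n,p)$, form an almost complete $2$-colored reduced graph, invoke Letzter's lemma (\Cref{lem:Shoham}) to find a long monochromatic path there, and feed that path into the Key Lemma (\Cref{lem:key}). The paper packages this via \Cref{cor:RamseyRandom-r} with $\lambda^*=2/3$, and builds its reduced graph $\tilde R$ slightly differently (requiring regularity in \emph{both} colors and total $p$-density $\geq\rho$, then coloring by whichever color has density $\geq\rho/2$), but this is a cosmetic variation of your $E(R_1)\cup E(R_2)$ construction and the edge count goes through the same way.
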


This is asymptotically best possible, as there is a 2-coloring of $E(K_n)$ with no monochromatic path (and thus cycle) on more than $2n/3$ vertices.
To see this, we let $A_1$ and $A_2$ be two disjoint sets of vertices of sizes $\frac 13 n$ and $\frac 23 n$, respectively, where all the edges in between are colored blue, the edges inside $A_1$ are colored blue, and the edges inside $A_2$ are colored red (see, e.g., also \cite{Shoham}). 
The analogous theorem for paths was proved by Letzter in \cite{Shoham}, who showed that \whp $G(n,p)\to_2 P_{(\frac 23-\beta) n}$.

\subsubsection*{Long monochromatic even cycles in 3-colorings of random graphs}
For three colors and even cycles, we will use the Ramsey-type result by Figaj and \L uczak (Lemma~3 from \cite{RamseyConnectedMatching}, see \Cref{lem:RamseyConnectedMatching} here) about the existence of a large monochromatic \textit{connected matching} in a 3-coloring of almost complete graphs.
For applying this, we note that in \Cref{lem:key}, Item 1, it is enough to find a connected component with a matching of size $b/2$ (instead of a path of length $b$, to be discussed later in more details).
We then obtain the following by applying Lemma~3 from \cite{RamseyConnectedMatching}, together with the idea of connected matchings by \L uczak \cite{LuczakConnectedMat}.

\begin{theorem}\label{cor:RamseyRandom3}
	For every $\beta>0$ there exists $C > 0$ such that for $p\geq \frac Cn$,  \whp $G(n,p)\to_3 \mathcal C$, for $\mathcal C=\{C_t \mid A \log n \leq t \leq (\frac 12-\beta) n,\  t \text{ is even} \}$, where $A>0$ is an absolute constant. In particular, \whp $G(n,p)\to_3 C_{\lfloor(\frac 12-\beta) n\rfloor_{even}}$.
\end{theorem}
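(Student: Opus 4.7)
The plan is to mirror the proof strategy used for \Cref{cor:RamseyRandom2} in the two-color case, but replacing Letzter's Ramsey-type theorem by the connected-matching lemma of Figaj and \L uczak~\cite{RamseyConnectedMatching}. First, fix $\beta>0$, set $\varepsilon,\rho,k_0,\eta$ as in the proof of \Cref{thm:main} with $\beta$ replaced by $\beta/200$, and observe that for $p\ge C/n$ the graph $G(n,p)$ is w.h.p.\ $(p,\eta)$-upper-uniform. A standard iterated application of \Cref{thm:srl} yields w.h.p.\ a partition $\Pi=(V_1,\dots,V_k)$ with $k_0\le k\le K_0$ which is $(\varepsilon,p)$-regular simultaneously in each of the three color classes of any fixed 3-edge-coloring of $G(n,p)$.

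Given such a coloring, define colored reduced graphs $R_1,R_2,R_3$ on $[k]$, with $\{i,j\}\in R_c$ iff the pair $(V_i,V_j)$ is $(\varepsilon,p)$-regular in color $c$ with $c$-density at least $\rho/3$. Running the edge-count of \Cref{lem:EdgesRGraph} color by color gives
\[
e(R_1)+e(R_2)+e(R_3)\ \ge\ \left(1-\tfrac{\beta}{50}\right)\binom{k}{2},
\]
so the auxiliary 3-colored graph $H$ on $[k]$ obtained by assigning each edge a single color in which it appears has at least $(1-\beta/50)\binom{k}{2}$ edges. Apply Lemma~3 of~\cite{RamseyConnectedMatching} to $H$ to obtain a color $c^*$ together with a connected subgraph $F\subseteq R_{c^*}$ containing a matching $M$ of size $m\ge(\tfrac14-\tfrac{\beta}{10})k$. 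By \Cref{cl:OneEdge}, each edge of $R_{c^*}$ represents a pair satisfying the $\varepsilon$-property with respect to the $c^*$-colored subgraph $G^*\subseteq G(n,p)$, so $F$ lives inside the corresponding $\varepsilon$-graph $S(G^*,\Pi,\varepsilon)$.

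Finally, I would invoke the variant of \Cref{lem:key} Item~1 indicated in the remarks preceding this corollary, in which an odd path of length $b$ in $S$ is replaced by a connected subgraph of $S$ containing a matching of size $(b+1)/2$. One embeds the trees $T^{(2,h)}_{\ell_i}$ inside each matching pair exactly as in \Cref{sec:PfKey}, and joins consecutive matching pairs via short paths inside $F$ rather than by single $\varepsilon$-edges; the bridging paths have combined length bounded by $|V(F)|\le k$, a constant, so they do not affect the leading asymptotics. Adjusting the $\ell_i$'s to produce the right parity, this yields in $G^*$ all even cycles of lengths in $[A\log n,\,(1-\delta)(2m/k)n]\supseteq[A\log n,\,(\tfrac12-\beta)n]$, establishing $G(n,p)\to_3\mathcal{C}$.

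The main obstacle is the connected-matching upgrade of \Cref{lem:key}: one must re-examine the stitching of the $T^{(2,h)}_{\ell_i}$-copies in \Cref{sec:PfKey} and verify that replacing single $\varepsilon$-edges between consecutive matching pairs by short paths through $F$ keeps the leaf-set sizes, the expansion properties, and the parity of the final cycle controllable. This essentially follows \L uczak's connected-matching paradigm~\cite{LuczakConnectedMat}, and once it is set up, producing cycles of every admissible length and of the correct parity relies only on the flexibility of the $\ell_i$ parameters and the $\varepsilon$-property along the bridging paths.
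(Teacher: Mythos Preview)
Your proposal is correct and follows essentially the same route as the paper: colorful sparse regularity, the Figaj--\L uczak connected-matching lemma applied to the nearly complete reduced graph, and then a connected-matching upgrade of the Key Lemma in the spirit of \L uczak. The one point your sketch leaves under-specified is how the cycle actually closes: the paper takes a \emph{closed walk} on a minimal tree $T\subseteq F$ containing $M$ (so each edge of $T$, and in particular each matching edge, is traversed twice) and splits every non-leaf matching cluster $V_i$ into two halves $U_i,U'_i$, embedding a separate $T^{(2,h)}_\ell$ in each half-pair; your phrase ``join consecutive matching pairs via short paths inside $F$'' reads as a linear concatenation, which would yield a long path but not a cycle. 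Since you correctly defer this step to \L uczak's paradigm, the proposal is sound, but when you write it out you will need this double-traversal and cluster-splitting mechanism explicitly.
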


This is also asymptotically optimal.
Indeed, for even $n$, we construct the following graph on $2n-3$ vertices.
We take three sets $A,B,C$ of size $\frac {n-2}2$, and one set $D$ of size $\frac n2$, and color $E(A,B)$ and $E(C,D)$ with color 1, $E(A,D)$ and $E(B,C)$ with color 2, and everything else with color 3.
Note that this graph contains no monochromatic copy of $C_n$ (see, e.g., \cite{RamseyEven3}).
The analogous theorem for paths was proved by Dudek and Pra\l at in \cite{RamseyRandomPaths3col}.

\subsubsection*{Long monochromatic even cycles in $r$-colorings of random graphs}
In general, we let
\begin{align*}
\lambda^*:=\lambda^*(r)=\sup_{\varepsilon>0}\left\{ \lambda \bigm| \substack{\exists k_0 \text{ s.t. } \forall k \ge k_0 \text{ every graph } G \text{ on } k \text{ vertices}\\ \text{with at least } (1-\varepsilon)\tbinom k2 \text{ edges has } G\to_r P_{\lambda k}}  \right\},
\end{align*}
and we then obtain the following theorem.

\begin{theorem}\label{cor:RamseyRandom-r}
Let $r>1$ be an integer. For every $\beta>0$ there exists $C > 0$ such that for $p\geq \frac Cn$,  \whp $G(n,p)\to_r \mathcal C$, for $\mathcal C=\{C_t \mid A_r \log n \leq t \leq (\lambda^*-\beta) n,\  t \text{ is even} \}$, where $A_r>0$ is a constant that depends only on $r$.
In particular, \whp $G(n,p)\to_r C_{\lfloor(\lambda^*-\beta) n\rfloor_{even}}$. 
\end{theorem}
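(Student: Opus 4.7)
The plan is to mirror the proof of \Cref{thm:main}, replacing the single-color density count with one that is compatible with an $r$-coloring and then invoking the Ramsey-universality property built into the definition of $\lambda^*$. Fix $r \ge 2$ and $\beta > 0$. By the definition of $\lambda^* = \lambda^*(r)$, choose $\varepsilon^* > 0$ and $k^*$ such that every graph on $k \ge k^*$ vertices with at least $(1-\varepsilon^*)\binom{k}{2}$ edges is $r$-Ramsey for $P_{(\lambda^* - \beta/2)k}$. Pick $\varepsilon_0 > 0$ small enough that $20r\varepsilon_0 \le \varepsilon^*$ and $96\varepsilon_0 \lambda^* \le \beta$, and set $\rho = 10\varepsilon_0$.

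First I would apply the Sparse Regularity Lemma simultaneously to all $r$ color classes $G_1, \ldots, G_r$ of $G = G(n,p)$ for $p \ge C/n$. Since $G(n,p)$ is whp $(p, \eta)$-upper-uniform and every subgraph of a $(p, \eta)$-upper-uniform graph is itself $(p, \eta)$-upper-uniform, an iterated refinement based on \Cref{thm:srl} yields a single partition $\Pi = (V_1, \ldots, V_k)$ with $k^* \le k \le K_0$ that is $(\varepsilon_0, p)$-regular in every color. I would then construct a colored reduced graph $R$ on $[k]$ as follows: for each pair $(V_i, V_j)$ regular in every color, let $c^*(i,j)$ be a color maximizing $d_{G_c, p}(V_i, V_j)$, and add the edge $\{i,j\}$ to $R$ with color $c^*(i,j)$ provided this maximum is at least $\rho$. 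An edge count completely analogous to that of \Cref{lem:EdgesRGraph}, using that whp $e(G) = (1 - o(1)) p \binom{n}{2}$ and that the $r$ color $p$-densities in a regular pair sum to the overall $p$-density, would show that at most $O(r\varepsilon_0)\binom{k}{2} \le \varepsilon^*\binom{k}{2}$ edges of $R$ are missing.

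By the choice of $\varepsilon^*$, some color $c$ hosts a monochromatic copy of $P_{(\lambda^* - \beta/2)k}$ in $R$. Let $b$ be the largest odd integer with $b \le (\lambda^* - \beta/2)k$, so the color-$c$ subgraph $R_c$ contains a path of odd length $b$. By \Cref{cl:OneEdge} applied within $G_c$, every color-$c$ edge of $R$ corresponds to a pair $(V_i, V_j)$ satisfying the $\varepsilon_0$-property in $G_c$; hence this odd-length path survives in the $\varepsilon_0$-graph $S_c := S(G_c, \Pi, \varepsilon_0)$ of \Cref{def:SGraph}. Item 1 of the Key Lemma (\Cref{lem:key}), applied to $G_c$ with partition $\Pi$, then delivers cycles of every even length in $[C_1 \log n / \log(1/\varepsilon_0),\, (1 - 48\varepsilon_0)(b+1)n/k]$ in $G_c$, and the choice of $\varepsilon_0$ ensures the upper endpoint is at least $(\lambda^* - \beta)n$. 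Absorbing $C_1 / \log(1/\varepsilon_0)$ into a constant $A_r$ completes the argument.

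The main obstacle is the combined regularity-and-coloring step: one must run \Cref{thm:srl} in a manner that produces a single partition regular in every color at once, and then verify via a somewhat delicate double count that the majority-color rule leaves only $O(r\varepsilon_0)\binom{k}{2}$ edges of $R$ missing, within the tolerance $\varepsilon^*$ dictated by the definition of $\lambda^*$. Item 1 of the Key Lemma conveniently requires an odd-length path and produces even cycles, so the target even-cycle lengths match the hypothesis without any additional parity juggling.
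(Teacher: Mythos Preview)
Your proposal is correct and follows essentially the same route as the paper's proof sketch: obtain a single partition that is $(\varepsilon,p)$-regular in every color class, build a colored reduced graph with $(1-o_\varepsilon(1))\binom{k}{2}$ edges, invoke the definition of $\lambda^*$ to find a monochromatic long path, and feed it into Item~1 of the Key Lemma. The only cosmetic differences are that the paper cites a ready-made Colorful Sparse Regularity Lemma (\Cref{thm:csrl}) rather than iterating \Cref{thm:srl}, and it colors an edge $ij$ by any color with $p$-density at least $\rho/r$ (after thresholding the \emph{total} $p$-density at $\rho=2r\varepsilon$) instead of your majority-color rule; both choices yield the same reduced graph up to the edge-count slack you need, so the ``main obstacle'' you flag is already packaged as a black box in the paper.
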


Note that by \Cref{thm:ErdosGallai} we have $\lambda^*\geq \frac 1r$.
We believe that the value of $\lambda^*$ is equal to the value of $R_r^{-1}(k)/k$ for $r\geq 2$, where by $R_r^{-1}(k)$ we denote the inverse of the Ramsey function witn respect to paths.
This would make \Cref{cor:RamseyRandom-r} asymptotically optimal.
Note that equality holds for the cases $r=2$ and $r=3$, as witnessed in \cite{Shoham} and \cite{RamseyEven3}, respectively.
The value of $R_r^{-1}(k)/k$ is still unknown and is thought to be $\frac 1{r-1}$ for $r \ge 3$ (see, e.g. \cite{Sarkozy}, see also \cite{Charlotte} for a recent improvement).
Nevertheless, any lower bound, $\lambda'$, on $\lambda^*$,  gives us \whp $G(n,p)\to_r C_{\lfloor(\lambda'-\beta) n\rfloor_{even}}$.

\subsubsection*{Long monochromatic odd cycles in 2- and 3-colorings of random graphs}

For the case of two and three colors and odd cycles, we can give an asymptotically optimal result, using  Ramsey-type arguments due to \L uczak \cite{LuczakConnectedMat} (Lemma~8 and Lemma~9 in \cite{LuczakConnectedMat}, see \Cref{lem:AlmostRamseyOdd2} and \Cref{lem:AlmostRamseyOdd3} here).

\begin{theorem}\label{cor:RamseyRandomOddr2}
	For every $\beta>0$ there exists $C > 0$ such that for $p\geq \frac Cn$,  \whp $G(n,p)\to_2 \mathcal C$, for $\mathcal C=\{C_t \mid A \log n \leq t \leq (\frac 12-\beta) n,\  t \text{ is odd} \}$, where $A>0$ is an absolute constant.
	In particular, \whp $G(n,p)\to_2 C_{\lfloor(\frac 12-\beta) n\rfloor_{odd}}$. 
\end{theorem}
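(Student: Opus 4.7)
The strategy follows the template already established by Theorem~\ref{thm:main} and Theorem~\ref{cor:RamseyRandom2}: apply the Sparse Regularity Lemma in a multicolored setting, invoke a Ramsey-type statement on the resulting almost-complete $2$-edge-coloring of the reduced graph, and then push back down to $G(n,p)$ via the Key Lemma. Choosing $C=C(\beta)$ sufficiently large, w.h.p.\ $G=G(n,p)$ is $(p,\eta)$-upper-uniform for any fixed $\eta=\eta(\beta)$. Given any $2$-coloring $E(G)=E_1\cup E_2$, each color class $G_i$ is also $(p,\eta)$-upper-uniform as a subgraph of $G$. I would apply the Sparse Regularity Lemma (Theorem~\ref{thm:srl}) in its standard multicolor form (successively refining partitions for $G_1$ and $G_2$) to obtain a single partition $\Pi=(V_1,\ldots,V_k)$ that is $(\varepsilon,p)$-regular for both $G_1$ and $G_2$ simultaneously, with $\varepsilon,\rho,k$ as in Remark~\ref{parameters}.

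For each color $i\in\{1,2\}$ let $R_i=R(G_i,\Pi,\rho,\varepsilon,p)$ and $S_i=S(G_i,\Pi,\varepsilon)$; by Claim~\ref{cl:OneEdge} we have $R_i\subseteq S_i$. Running the edge-count of Lemma~\ref{lem:EdgesRGraph} in both colors at once (using that $e(G_1)+e(G_2)=e(G)\geq(1-o(1))p\binom{n}{2}$ w.h.p.) shows that the density of $E(R_1)\cup E(R_2)$ in $\binom{[k]}{2}$ is at least $1-\varepsilon'$, where $\varepsilon'=O(\varepsilon)\ll\beta$. Thus, after assigning each edge in $R_1\cup R_2$ to a single color in which it appears, we obtain a $2$-edge-coloring of a graph on $k$ vertices with at most $\varepsilon'\binom{k}{2}$ non-edges. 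Applying Łuczak's Ramsey-type lemma for almost-complete $2$-colored graphs (Lemma~\ref{lem:AlmostRamseyOdd2}, whose proof in~\cite{LuczakConnectedMat} is robust to the deletion of a small fraction of edges), one colour, say color $1$, contains a monochromatic odd cycle of length $b\geq(\tfrac12-\tfrac\beta4)k$ inside $S_1$.

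Finally, I would apply Item~2 of the Key Lemma (Lemma~\ref{lem:key}) with $S_1$ and this cycle of length $b$ to conclude that $G_1$, and hence the color-$1$ subgraph of $G(n,p)$, contains cycles of all odd lengths in $\bigl[\tfrac{(b-1)C_1}{2\log(1/\varepsilon)}\log n,\,(1-\delta)\tfrac{b}{k}n\bigr]$, whose upper endpoint is at least $(\tfrac12-\beta)n$ for our choice of parameters. To cover the short range $[A\log n,\,\Omega(k\log n)]$ as well, I would additionally exhibit a short monochromatic odd cycle in the same color: since $S_1\cup S_2$ is almost-complete, a pigeonhole together with the fact that $R(K_3,K_3)=6$ yields a monochromatic triangle, and by Łuczak's argument one can guarantee that a triangle lies in the same color as the long odd cycle (or else argue directly on the denser color class). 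Applying Item~2 again with $b=3$ then extends the range of odd lengths down to $A\log n$, completing the proof.

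The main obstacle in this plan is Step~3: making sure the short and long monochromatic odd cycles coexist in the same color, so that a single application of the Key Lemma covers the entire interval $[A\log n,(\tfrac12-\beta)n]$. If the Ramsey-type lemma of~\cite{LuczakConnectedMat} delivers only a long odd cycle, one must inspect its proof (which is based on a minimum-degree reduction and structural case analysis similar to that in Claim~\ref{cl:TightOdd}) and extract a short odd cycle of length $O(1)$ in the same color, rather than in an arbitrary one; the reduction to minimum degree $\Omega(k)$ inside the dominant color class, combined with Theorem~\ref{thm:Diam}, provides such a short odd cycle automatically. The remaining work is routine bookkeeping on the constants $C_1,C_2,\eta,\varepsilon,\rho$ to match the threshold $C$ and the endpoints $A\log n$ and $(\tfrac12-\beta)n$.
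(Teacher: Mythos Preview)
Your approach matches the paper's exactly: the paper derives Theorem~\ref{cor:RamseyRandomOddr2} by plugging \L uczak's Lemma~\ref{lem:AlmostRamseyOdd2} into the template proof sketch given for Theorem~\ref{cor:RamseyRandomOdd} (Colourful Sparse Regularity Lemma, Theorem~\ref{thm:csrl}, to obtain an almost-complete $2$-coloured reduced graph, then Item~2 of the Key Lemma). Your edge-count on $R_1\cup R_2$ and the passage $R_i\subseteq S_i$ via Claim~\ref{cl:OneEdge} are the same as in the paper.

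You actually go a step beyond the paper in spotting a subtlety it glosses over in \S\ref{sec:Ramsey}: Item~2 of Lemma~\ref{lem:key} applied to a monochromatic odd cycle of length $b\approx k/2$ only yields odd cycles from $\tfrac{(b-1)C_1}{2\log(1/\varepsilon)}\log n$ upward, and since $b\ge k_0/2\ge 1/\varepsilon^2$ this lower endpoint is not bounded uniformly in $\beta$. The paper handles exactly this issue in the robustness setting (Case~1 and Subcase~2.2 of the proof of Theorem~\ref{cl:TightOdd}) by producing, in addition to the long odd cycle, a bounded-length odd cycle in the \emph{same} graph and applying Item~2 twice to cover the full range. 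Your final paragraph identifies the correct fix: pass to a non-bipartite subgraph of minimum degree $\Omega(k)$ inside the colour class carrying the long odd cycle (this structure is implicit in \L uczak's argument), and then Theorem~\ref{thm:Diam} bounds the length of a shortest odd cycle there by an absolute constant---this is precisely Step~I of Lemma~\ref{lem:OddCycleS}. The ``$R(K_3,K_3)=6$'' idea, on the other hand, does not place the triangle in the required colour, so drop it in favour of the minimum-degree/diameter route.
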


\begin{theorem}\label{cor:RamseyRandomOddr3}
	For every $\beta>0$ there exists $C > 0$ such that for $p\geq \frac Cn$,  \whp $G(n,p)\to_3 \mathcal C$, for $\mathcal C=\{C_t \mid A \log n \leq t \leq (\frac 14-\beta) n,\  t \text{ is odd} \}$, where $A>0$ is an absolute constant.
	In particular, \whp $G(n,p)\to_3 C_{\lfloor(\frac 14-\beta) n\rfloor_{odd}}$. 
\end{theorem}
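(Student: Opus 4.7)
The plan is to follow the blueprint established by \Cref{cor:RamseyRandom3} and \Cref{cor:RamseyRandomOddr2}, using as combinatorial input the 3-color Ramsey-type lemma for odd cycles due to \L uczak (\Cref{lem:AlmostRamseyOdd3}, i.e.\ Lemma~9 of \cite{LuczakConnectedMat}), and as transfer mechanism the Key Lemma (\Cref{lem:key}). Set $\varepsilon$ small in terms of $\beta$, and take the other parameters $\rho,\eta,k_0,K_0,\gamma$ as in \Cref{parameters}.

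First, I would apply a colored version of the Sparse Regularity Lemma (\Cref{thm:srl}), obtained routinely by iteratively regularizing one color at a time, to produce with probability $1-e^{-\Omega(n)}$ a partition $\Pi=(V_1,\dots,V_k)$ of $V(G(n,p))$ which is simultaneously $(\varepsilon,p)$-regular in each of the three color classes $G_1,G_2,G_3$. For each $i\in\{1,2,3\}$ set $R_i=R(G_i,\Pi,\rho,\varepsilon,p)$ and $S_i=S(G_i,\Pi,\varepsilon)$. By \Cref{cl:OneEdge} we have $R_i\subseteq S_i$, and by a colored analogue of \Cref{lem:EdgesRGraph} (with essentially the same edge-counting: intra-cluster, irregular and low-density edges together account for at most a $\tau$-fraction per color) we have $e(R_1)+e(R_2)+e(R_3)\ge (1-\tau)\binom{k}{2}$, with $\tau$ arbitrarily small in terms of $\beta$.

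Second, I would view $(S_1,S_2,S_3)$ as a 3-edge-coloring of a graph on $[k]$ with at most $\tau\binom{k}{2}$ missing edges, and apply \Cref{lem:AlmostRamseyOdd3}: for an appropriate color $i$, this delivers a non-bipartite connected component in $S_i$ containing a matching of size at least $\bigl(\tfrac18-\tfrac{\beta}{100}\bigr)k$. The classical connected-matching-to-odd-cycle argument (concatenate two matching edges at either end of an internally disjoint path through the component, adjusting parity via the component's odd cycle) then produces in $S_i$ odd cycles of every odd length $b\in[3,(1/4-\beta/50)k]$.

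Third, given any target $t\in[A\log n,(1/4-\beta)n]$ with $t$ odd, I would choose a suitable odd $b$ (short $b$ for small $t$; $b\approx k/4$ for large $t$) and invoke the second item of the Key Lemma (\Cref{lem:key}) for $G_i$ and $\Pi$: from an odd $b$-cycle in $S_i$ one obtains odd cycles in $G_i$ of every odd length in $\bigl[\tfrac{(b-1)C_1}{2\log(1/\varepsilon)}\log n,\,(1-\delta)(b/k)n\bigr]$. For $\varepsilon$ small enough, $(1-\delta)(1/4-\beta/50)>1/4-\beta$, so the full target range is covered in color $i$, and $A$ can be taken as $C_1/(2\log(1/\varepsilon))$ (dependent on $\beta$ only through $\varepsilon$, in the same sense as in \Cref{cor:RamseyRandomOddr2}).

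The main obstacle is the Ramsey input: ensuring that \L uczak's lemma still applies when the host graph is only almost complete rather than $K_k$. The robustness against removing a $\tau$-fraction of edges is standard, and can be handled exactly as in the proof of \Cref{lem:OddCycleS}, by first cleaning up vertices of high degree in the graph of missing edges so as to reduce to the complete case, where $R_3(C_{2\ell+1})\sim 8\ell$ gives the sharp $k/4$ asymptotic. A secondary subtlety is the passage from a connected matching of size $\approx k/8$ in a non-bipartite component to odd cycles of \emph{all} odd lengths up to $(1/4-\beta/50)k$, but this is the well-known Bondy--Erd\H{o}s/\L uczak parity-switching argument inside a single component.
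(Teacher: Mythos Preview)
Your overall framework---colored sparse regularity, almost-complete reduced graph, a 3-color Ramsey input, then the Key Lemma---is exactly the paper's. But two of your steps misread the tools and one of them is actually false.

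First, you misquote \Cref{lem:AlmostRamseyOdd3}. \L uczak's Lemma~9 does \emph{not} deliver a non-bipartite connected component with a large matching; as stated in the paper (and in \cite{LuczakConnectedMat}), it directly yields a monochromatic \emph{odd cycle} of length at least $(\tfrac14-\tfrac{3\eta}{20})k$ in any $3$-coloring of a graph on $k$ vertices with $(1-\eta^3)\binom{k}{2}$ edges. You have conflated it with \Cref{lem:RamseyConnectedMatching}, which is the even-cycle tool. Relatedly, your ``main obstacle''---robustness of the lemma to missing edges---is no obstacle at all: the lemma is already formulated for almost-complete host graphs, so nothing like the cleanup in \Cref{lem:OddCycleS} is needed.

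Second, your intermediate claim that a non-bipartite connected component with a matching of size about $k/8$ contains odd cycles of \emph{every} odd length up to $(\tfrac14-\tfrac{\beta}{50})k$ is not a ``well-known Bondy--Erd\H{o}s/\L uczak parity-switching argument'': it is simply false. The odd cycle $C_{2m+1}$ is connected, non-bipartite, has a matching of size $m$, and contains no odd cycle of any length other than $2m+1$. \L uczak's connected-matching method converts a matching in the \emph{reduced} graph into a long path/cycle in the \emph{original} graph; it says nothing about producing cycles of all lengths inside the reduced graph.

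The good news is that neither of these steps is needed. Once \Cref{lem:AlmostRamseyOdd3} hands you a single monochromatic odd cycle of length $b\ge(\tfrac14-o_\beta(1))k$ in $S_i$, apply Item~2 of \Cref{lem:key} once with this $b$. Since $b\le k\le K_0$ is a constant (depending on $\varepsilon$, hence on $\beta$), the lower end $\tfrac{(b-1)C_1}{2\log(1/\varepsilon)}\log n$ is $A\log n$ for some constant $A$, and the upper end $(1-\delta)(b/k)n\ge(\tfrac14-\beta)n$ for $\varepsilon$ small. There is no need to vary $b$ with $t$, and no need for short odd cycles in $S_i$.
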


These statements are indeed asymptotically optimal, as shown in the following example.
For $r=2$, split the vertices into two sets as equal as possible and color the edges between the sets with blue, and the other edges with red.
This will give an odd monochromatic cycle of length at most $\frac {n+1}2$, but not longer.
For $r=3$, we buikd a graph on $2n-2$ vertices. 
We take two disjoint copies of a 2-colored graph on $n-1$ vertices with no $C_\frac {n+1}2$, and color the edges between them in the third color.
This will give a monochromatic cycle of length at most $\frac {n-1}2$.  
These examples are known to be best possible as for large enough odd $n$ it was shown that $R_2(C_n)=2n-1$ and $R_3(C_n)=4n-3$ (see \cite{BondyErdosOdd2,KohayaOdd3,LuczakConnectedMat}).

\subsubsection*{Long monochromatic odd cycles in $r$-colorings of random graphs}
For longer odd cycles, we let
\begin{align*}
	\lambda^*_o:=\lambda^*_o(r)=\sup_{\varepsilon>0}\left\{ \lambda \bigm| \substack{\exists k_0 \text{ s.t. } \forall k \ge k_0 \text{ every graph } G \text{ on } k \text{ vertices}\\ \text{and at least } (1-\varepsilon)\tbinom k2 \text{ edges has } G\to_r C_{\lfloor{\lambda k}\rfloor_{odd}}}  \right\}.
\end{align*}
Then by using \Cref{lem:key} Item 2, we get the following.

\begin{theorem}\label{cor:RamseyRandomOdd}
	Let $r>1$ be an integer.
	For every $\beta>0$ there exists $C > 0$ such that for $p\geq \frac Cn$,  \whp $G(n,p)\to_r \mathcal C$, for $\mathcal C=\{C_t \mid A_r \log n \leq t \leq (\lambda^*_o-\beta) n,\  t \text{ is odd} \}$, where $A_r>0$ is a constant that depend only on $r$.
	In particular, \whp $G(n,p)\to_r C_{\lfloor(\lambda^*_o-\beta) n\rfloor_{odd}}$.
\end{theorem}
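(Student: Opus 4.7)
The plan is to mirror the strategy used for the earlier Ramsey-type corollaries in this section (for example, \Cref{cor:RamseyRandom-r}), but working with monochromatic odd cycles and appealing to the second item of the Key Lemma instead of the first. Throughout, fix an $r$-edge-coloring of $G\coloneqq G(n,p)$ for $p\geq C/n$ with $C$ large enough, and let $\beta>0$ be given.

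First, I would apply the colored version of the Sparse Regularity Lemma to obtain a partition $\Pi=(V_1,\dots,V_k)$, where $k_0\le k\le K_0$, such that the pair $(V_i,V_j)$ is simultaneously $(\varepsilon,p)$-regular in \emph{every} color class, for all but at most $\varepsilon\binom{k}{2}$ pairs; here $\varepsilon$ is chosen tiny compared to $\beta$ and compared to the $\varepsilon$ guaranteed by the definition of $\lambda^*_o$. Since $p\geq C/n$ with $C=C(\varepsilon,k_0)$ sufficiently large, $G(n,p)$ is whp $(p,\eta)$-upper-uniform, so the lemma applies, and whp $e(G)\geq (1-\varepsilon/2)p\binom{n}{2}$. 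Exactly as in the proof of \Cref{lem:EdgesRGraph}, an edge-count argument then shows that the number of pairs $(V_i,V_j)$ which are $(\varepsilon,p)$-regular in every color and whose total $p$-density in $G$ is at least $\rho=10\varepsilon$ is at least $(1-\varepsilon')\binom{k}{2}$ for some small $\varepsilon'$ depending only on $\varepsilon$, $\rho$ and the error in the $p$-density of $G$.

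Second, I would define an auxiliary $r$-edge-colored graph $R^*$ on vertex set $[k]$: for each such ``good'' pair $(V_i,V_j)$, assign to the edge $ij$ the color which has the largest $p$-density inside the pair, breaking ties arbitrarily. This color then has $p$-density at least $\rho/r$ in the pair, so by \Cref{cl:OneEdge} the pair $(V_i,V_j)$ satisfies the $\varepsilon$-property \emph{in that color class} of $G$, provided $\rho/r-\varepsilon>0$. The graph $R^*$ has at least $(1-\varepsilon')\binom{k}{2}$ edges, and by choosing $k_0$ large enough, the definition of $\lambda^*_o=\lambda^*_o(r)$ guarantees a monochromatic odd cycle in $R^*$ of length $b=\lfloor(\lambda^*_o-\beta/2)k\rfloor_{odd}$, in some color $c$.

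Third, I would invoke the Key Lemma (\Cref{lem:key}, Item 2) applied to the spanning subgraph $G_0\subseteq G$ consisting of only the edges of color $c$. The odd cycle of length $b$ in $R^*$ is a cycle of the corresponding $\varepsilon$-graph $S(G_0,\Pi,\varepsilon)$, so \Cref{lem:key} produces in $G_0$ cycles of \emph{every} odd length in the interval
\[
\left[\tfrac{(b-1)C_1}{2\log(1/\varepsilon)}\log n,\ (1-\delta)\tfrac{b}{k}n\right],
\]
for $\delta=48\varepsilon$. Since $b/k\geq \lambda^*_o-\beta/2-o(1)$ and $\delta,\varepsilon$ are much smaller than $\beta$, the upper endpoint exceeds $(\lambda^*_o-\beta)n$, while the lower endpoint is at most $A_r\log n$ for some constant $A_r=A_r(r,\beta)$ (absorbing the factor $(b-1)C_1/(2\log(1/\varepsilon))$ into $A_r$, since $b\le k\le K_0$ is bounded and $\varepsilon$ is fixed). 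Therefore the color class $c$ contains a copy of $C_t$ for every odd $t\in[A_r\log n,(\lambda^*_o-\beta)n]$, which is exactly the required Ramsey-universality statement. The main obstacle is purely bookkeeping: ensuring that the parameter chain $\beta\gg\varepsilon'\gg\varepsilon\gg\rho/r$ is consistent with the quantifier order in the definition of $\lambda^*_o$, so that $k_0$ (and hence $K_0$) can indeed be taken large enough relative to $\varepsilon'$ while still giving a single constant $C=C(\varepsilon,k_0,r,\beta)$ valid for all $n$.
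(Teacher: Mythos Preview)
Your proposal is correct and follows essentially the same route as the paper's proof sketch: apply the colorful Sparse Regularity Lemma, build a nearly complete $r$-colored auxiliary graph on $[k]$ by assigning to each dense regular pair a majority color, invoke the definition of $\lambda^*_o$ to extract a long monochromatic odd cycle, and feed that cycle into Item~2 of the Key Lemma. One small parameter point: with your choice $\rho=10\varepsilon$ the requirement $\rho/r-\varepsilon>0$ fails once $r\ge 10$; the paper takes $\rho=2r\varepsilon$ precisely so that $\rho/r=2\varepsilon>\varepsilon$ holds uniformly in $r$, and you should do the same.
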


The value of $R_r(C_n)$ for an odd $n$ was determined exactly by Jenssen and Skokan in \cite{RamseyOddCyclesSkokan}, and is equal to $2^{r-1}(n-1)+1$. Thus it is plausible to believe that $\lambda^*_o$ should be asymptotically equal to $\frac 1{2^{r-1}}$. 
Here we present a Ramsey-type argument (see \Cref{cl:RamseyAlmost-rcol}) to show that $\lambda^*_o(r)=\Omega(\frac {1}{r2^r})$, which differs from the upper bound only by a factor proportion to $r$. This gives the following.

\begin{corollary}\label{cor:RamseyRandomOddr}
	Let $r>2$ be an integer. For every $\beta>0$ there exists $C > 0$ such that for $p\geq \frac Cn$,  \whp $G(n,p)\to_r \mathcal C$, for $\mathcal C=\{C_t \mid A_r \log n \leq t \leq (\frac {1}{r2^{r+4}}-\beta) n,\  t \text{ is odd} \}$, where $A_r>0$ is a constant that depend only on $r$. In particular, \whp $G(n,p)\to_r C_{\lfloor(\frac {1}{r2^{r+4}}-\beta) n\rfloor_{odd}}$. 
\end{corollary}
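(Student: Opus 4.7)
The plan is to derive \Cref{cor:RamseyRandomOddr} as a direct consequence of \Cref{cor:RamseyRandomOdd} by establishing the lower bound $\lambda^*_o(r) \geq \tfrac{1}{r\, 2^{r+4}}$. Unpacking the definition of $\lambda^*_o(r)$, this reduces to a purely deterministic Ramsey-type statement (Claim~\ref{cl:RamseyAlmost-rcol}): for every integer $r \geq 1$ and every sufficiently small $\varepsilon > 0$, every graph $G$ on $k$ vertices (with $k$ large) satisfying $e(G) \geq (1-\varepsilon)\binom{k}{2}$ has $G \to_r C_{\floor{k/(r\, 2^{r+4})}_{odd}}$. Given this claim, the corollary is immediate from \Cref{cor:RamseyRandomOdd}.

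I would prove the Ramsey-type claim by induction on $r$. The base case $r = 1$ is immediate from \Cref{thm:woodall} together with \Cref{re:gtn}, which supply all odd cycles of lengths up to $(1 - O(\sqrt{\varepsilon}))k$ in a single-color, almost-complete graph, far exceeding $\floor{k/32}_{odd}$. For the inductive step, fix an $r$-coloring, pick an arbitrary color (say color $1$), and let $H$ denote its edge graph. Choose a suitably small $\eta = \eta(r) > 0$ and apply the dichotomy used in Subcase~2 of the proof of \Cref{cl:TightOdd}: either $H$ is $\eta$-close to being bipartite, or $\eta$-far from it.

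In the close-to-bipartite case, fix a bipartition $(A, B)$ of $V(G)$ witnessing this, and take $|A| \geq k/2$. The induced subgraph $G[A]$ has at most $\eta k^2$ color-$1$ edges; discarding them (which degrades the overall density by only $O(\eta)$) produces an almost-complete $(r-1)$-colored graph on $|A| \geq k/2$ vertices, to which the induction hypothesis applies. This yields a monochromatic odd cycle of length $\floor{|A|/((r-1)\, 2^{r+3})}_{odd} \geq \floor{k/((r-1)\, 2^{r+4})}_{odd} \geq \floor{k/(r\, 2^{r+4})}_{odd}$, as required (using $|A| \geq k/2$ and $r - 1 \leq r$). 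In the far-from-bipartite case, I would adapt Subcase~2.2 of the proof of \Cref{cl:TightOdd} directly to $H$: first iteratively remove vertices of small color-$1$ degree to obtain a subgraph with useful minimum degree, and then run the three-step construction of \Cref{lem:OddCycleS} (short odd cycle via a diameter bound, medium odd cycle via Menger, long odd cycle via Erd\H{o}s--Gallai) to produce a color-$1$ odd cycle of length $\Omega(k/r)$, which comfortably exceeds $\floor{k/(r\, 2^{r+4})}_{odd}$.

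The main obstacle will be the far-from-bipartite case. \Cref{lem:OddCycleS} is stated under the hypotheses $e(R) \geq (\tfrac14 - \delta')k^2$ and $\delta(R) \geq k/10$, and neither is automatically valid for a single color class whose density is only $\Omega(1/r)$. The three-step argument therefore has to be redone with weaker density and minimum-degree assumptions, and this loss is precisely what yields the extra factor of $r$ in $\tfrac{1}{r\, 2^{r+4}}$, as opposed to the conjectured $\tfrac{1}{2^{r-1}}$.
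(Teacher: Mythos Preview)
Your high-level reduction---establish the deterministic bound $\lambda^*_o(r) \geq 1/(r2^{r+4})$ and then invoke \Cref{cor:RamseyRandomOdd}---is exactly what the paper does. The divergence is in how the deterministic claim (\Cref{cl:RamseyAlmost-rcol}) is proved.

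The paper does not induct on $r$. It argues directly: with $\varepsilon = 1/(r2^{r+2})$, if every color class were $\varepsilon$-close to bipartite then (via $\chi(H) \le \prod_i \chi(H_i)$) the almost-complete $G$ would be $r\varepsilon$-close to being $2^r$-chromatic; but by Tur\'an $G$ contains a clique of size exceeding $2^r$, a contradiction. Hence some color $G_{i_0}$ is $\varepsilon$-far from bipartite. From here the paper does \emph{not} invoke \Cref{lem:OddCycleS}. It passes to the block decomposition of $G_{i_0}$: averaging over blocks shows that some block $B$ is still quantitatively far from bipartite, forcing $|B| \geq \tfrac43\varepsilon k$ and $e(B) \geq \tfrac23\varepsilon k\,|B|$. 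Inside this $2$-connected $B$ one takes any odd cycle $C_1$ (if already long, done), removes it, applies \Cref{thm:ErdosGallaiCycles} to the remainder to find a cycle $C_2$ of length $\geq \tfrac23\varepsilon k$, and if $C_2$ is even, uses $2$-connectivity to route two disjoint $C_1$--$C_2$ paths and fix the parity.

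Your inductive close-to-bipartite case is correct but unnecessary: the Tur\'an/chromatic-number step dispenses with it in one stroke by locating the far-from-bipartite color outright. More to the point, your far case has a genuine gap. \Cref{lem:OddCycleS} is engineered for graphs with $e(R) \geq (\tfrac14-\delta')k^2$ and $\delta(R) \geq k/10$; each of its three steps (the diameter bound via \Cref{thm:Diam}, the connectivity step, and the Erd\H{o}s--Gallai step) leans on one or both of these, and a single color class that is merely $\eta$-far from bipartite satisfies neither. Iteratively deleting low-degree vertices need not leave a connected graph, let alone one to which \Cref{thm:Diam} or \Cref{cl:Rconnectivity} applies. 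The block-decomposition route is what replaces all of this: blocks are $2$-connected by definition, and the far-from-bipartite hypothesis alone supplies enough edges in the chosen block to run \Cref{thm:ErdosGallaiCycles}. If you wish to keep the inductive framework, you should swap your far case for this block argument rather than attempt to rescale \Cref{lem:OddCycleS}.
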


\subsubsection*{Proof ideas}

For proving the above theorems, we will use a different variant of \Cref{thm:srl}, the colorful version of the Sparse Regularity Lemma (see, e.g., \cite{RegLemColor}).

\begin{theorem}[Colorful Sparse Regularity Lemma]\label{thm:csrl}
	For any given $\varepsilon>0$, and integers $r\geq 1$ and $k_0\geq 1$, there are constants $\eta = \eta(\varepsilon, k_0) > 0$ and $K_0 = K_0(\varepsilon, k_0) \geq k_0$ such that if $G_1,\dots,G_r$ are $(p,\eta)$-upper-uniform graphs on the vertex set $V$ of size $n$ for large enough $n$, with $0<p\leq 1$, there is an $(\varepsilon, p)$-regular partition of $V$ into $k$ parts, $\Pi=(V_1,\dots,V_k)$, where $k_0\leq k \leq K_0$, such that at least $(1-\varepsilon)\binom k2$ pairs $(V_i,V_j)$ with $1\leq i<j\leq k$ are $(\varepsilon,G_\ell,p)$-regular, for each $\ell\in [r]$.
\end{theorem}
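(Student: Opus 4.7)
The plan is to run the standard iterative refinement proof of the Sparse Regularity Lemma (\Cref{thm:srl}) simultaneously for all $r$ graphs, tracking a single aggregated potential. For a partition $\Pi = (V_1, \ldots, V_k)$ of $V$ and a graph $H$ on $V$, set
\[ q_{H,p}(\Pi) = \sum_{1 \le i < j \le k} \frac{|V_i||V_j|}{n^2}\, d_{H,p}(V_i,V_j)^2, \]
and consider the aggregated multi-color index $Q(\Pi) = \sum_{\ell=1}^{r} q_{G_\ell, p}(\Pi)$. Using the $(p,\eta)$-upper-uniformity of each $G_\ell$ together with the averaging argument behind \Cref{re:UpperUni}, each term satisfies $q_{G_\ell, p}(\Pi) \le (1+\eta)^2$ (provided the parts of $\Pi$ are not too small), so $Q(\Pi) \le r(1+\eta)^2$ is bounded independently of $n$.

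The iteration begins with any equitable partition of $V$ into $k_0$ parts. At stage $t$, if the current equitable partition $\Pi_t$ is $(\varepsilon, G_\ell, p)$-regular for every color $\ell \in [r]$, we stop and return $\Pi_t$. Otherwise there exists a color $\ell^* \in [r]$ for which more than $\varepsilon \binom{|\Pi_t|}{2}$ pairs $(V_i, V_j)$ are not $(\varepsilon, G_{\ell^*}, p)$-regular. Apply Kohayakawa's refinement step from the proof of \Cref{thm:srl} to $G_{\ell^*}$ to obtain an equitable refinement $\Pi_{t+1}$ of $\Pi_t$ satisfying $q_{G_{\ell^*}, p}(\Pi_{t+1}) \ge q_{G_{\ell^*}, p}(\Pi_t) + \varepsilon^5$. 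Crucially, refinement cannot decrease $q_{H,p}$ for any graph $H$ (the standard convexity/Cauchy--Schwarz observation applied to each pair of blocks), so $q_{G_\ell, p}(\Pi_{t+1}) \ge q_{G_\ell, p}(\Pi_t)$ holds for every $\ell \neq \ell^*$ as well. Hence $Q(\Pi_{t+1}) \ge Q(\Pi_t) + \varepsilon^5$. Since $Q$ is bounded above by $r(1+\eta)^2$, the process terminates after at most $r(1+\eta)^2/\varepsilon^5$ stages, and the resulting number of parts is bounded by a constant $K_0 = K_0(\varepsilon, k_0, r)$ obtained from iterating the refinement bound that many times.

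The main obstacle is to verify that the classical refinement step, which in the dense setting yields the $\varepsilon^5$ increment in the index, still produces such a gain in the sparse normalized index $q_{H,p}$. This is precisely where $(p,\eta)$-upper-uniformity of each $G_\ell$ enters: it ensures that edge counts across large pairs are controlled up to a $(1+\eta)$ factor, so contributions from negligible subsets cannot overwhelm the density gain extracted from irregular pairs, and the potential remains bounded throughout the iteration. Since the refinement procedure operates on one color at a time while the potential is aggregated, no additional interaction between the graphs arises, and the argument reduces cleanly to the single-color case established by Kohayakawa. Choosing $\eta = \eta(\varepsilon, k_0, r)$ small enough that the refinement lemma applies to every color simultaneously completes the proof.
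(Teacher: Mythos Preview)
The paper does not actually prove \Cref{thm:csrl}; it is quoted as a known tool with a reference to the survey of Koml\'os and Simonovits \cite{RegLemColor} (adapted to the sparse setting of Kohayakawa \cite{SRL}). So there is no ``paper's own proof'' to compare against here.

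That said, your proposal follows exactly the standard route used to derive the multicolor version from the single-color one: run the index-increment iteration with the aggregated potential $Q(\Pi)=\sum_\ell q_{G_\ell,p}(\Pi)$, at each stage refine with respect to some color that is still irregular, and use convexity of the index together with $(p,\eta)$-upper-uniformity to guarantee both a definite increment in $Q$ and a uniform upper bound on it. This is the argument one finds in the references, so the strategy is correct. Two small points worth flagging. First, in the sparse setting the ``refinement does not decrease the index'' step and the boundedness of $q_{G_\ell,p}$ are not completely automatic: you need that all parts stay above the threshold $\eta n$ so that upper-uniformity applies, and you must handle the exceptional set carefully when re-equitizing after each refinement; you allude to this but it is where the real work lies. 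Second, the paper's statement writes $\eta=\eta(\varepsilon,k_0)$ and $K_0=K_0(\varepsilon,k_0)$, suppressing the dependence on $r$; your proof correctly yields constants depending on $r$ as well, which is unavoidable, so the paper's notation is simply loose on this point.
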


The proofs of \Cref{cor:RamseyRandom-r,cor:RamseyRandomOdd} follow the same pattern, which is an easy combination of \Cref{thm:csrl} and \Cref{lem:key}.
We sketch the details of their proofs below.

\begin{proof}[Proof sketch of \Cref{cor:RamseyRandom-r} and \Cref{cor:RamseyRandomOdd}]
We present here a proof sketch for \Cref{cor:RamseyRandom-r}, where similar arguments apply for \Cref{cor:RamseyRandomOdd}, considering $\lambda^*_o$ instead of $\lambda^*$ and an odd cycle instead of a path, mutatis mutandis.
Recall that \whp $G=G(n,p)$ is a $(p,\eta)$-upper-uniform graph for any $\eta>0$, and that \whp $e(G) \geq (1-o(1))p\binom n2$.
Let $c:E(G)\to [r]$ be an arbitrary coloring of its edges.
Let $\delta:=\delta(\beta)$ be such that every $r$-coloring of every graph on $k$ vertices with at least $(1-\delta)\binom k2$ edges contains a monochromatic path of length at least $(\lambda^*- \beta/2)k$;
such $\delta$ exists due to the definition of $\lambda^*$.

Define $G_i$ to be the graph of the $i$th color, that is, $V(G_i)=V(G)$, $E(G_i)=\{e\in E(G) \mid c(e)=i \}$.
Then $G_1,\dots,G_r$ are $(p,\eta)$-upper-uniform graphs.
For an appropriate choice of parameters $\varepsilon \coloneqq \varepsilon(\delta),k_0,\eta, \rho$, where $k_0$ is chosen also considering the definition of $\lambda^*$, by \Cref{thm:csrl} we get an $(\varepsilon, p)$-regular partition of $V$ into $k$ parts, $\Pi=(V_1,\dots,V_k)$, where $k_0\leq k \leq K_0$, such that at least $(1-\varepsilon)\binom k2$ pairs $(V_i,V_j)$ with $1\leq i<j\leq k$ are $(\varepsilon,G_t,p)$-regular, for each $t\in [r]$.

We now look at the following graph obtained from $G$, which we denote by $\tilde R$.
The vertices are $[k]$, and for any two distinct $i,j \in [k]$, we have $ij \in E(\tilde R)$ if and only if the pair $(V_i, V_j)$ is $\varepsilon$-regular in $G_t$ for every $t\in [r]$, and is of $p$-density at least $\rho \coloneqq 2r\varepsilon$ in $G$.
By the choice of $\varepsilon$ we have that $e(\tilde R)\geq  (1-\delta)\binom k2$.
We color the edges of $\tilde R$ with $r$ colors as follows: an edge $ij$ is colored with color $\nu$ if $d_{G_\nu,p}(V_i,V_j)\geq \rho /r$ (if there is more than one such color, we choose one arbitrarily), and color the rest of the edges of $\tilde R$ arbitrarily.

By the definition of $\lambda^*$ and the choice of $\delta$, we find a monochromatic path $P$ of length $({\lambda^*-\beta/2})k$.
Assume that this monochromatic path is colored with the color $\nu$ and look at the graph $G_\nu$.
Recall that $\Pi$ is an $(\varepsilon,p)$-regular partition with respect to $G_\nu$, then for $R_\nu:=R(G_\nu,\varepsilon,\rho,p)$, we get that $P$ is also contained in $R_\nu$, and thus in $S_\nu=S(G_\nu,\varepsilon,\Pi)$.
By the Key Lemma (\Cref{lem:key}), we get monochromatic even cycles of all lengths from $A_r\log n$ to $(\lambda^*-\beta)n$, all in the same color.
\end{proof}

\Cref{cor:RamseyRandom2} is a direct corollary of \Cref{cor:RamseyRandom-r} and the following lemma by Letzter. 

\begin{lemma}[Theorem~1.3 in \cite{Shoham}]\label{lem:Shoham}
	Given $0\leq \varepsilon\leq 1/64$ and large enough $k$, for every graph $G$ on $k$ vertices and $(1-\varepsilon)\binom k2$ edges, $G\to_2 P_\ell$ where $\ell = \floor{2k/3-110\sqrt{\varepsilon}k}$.
\end{lemma}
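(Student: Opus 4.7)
The statement is a robust (sparse) extension of the classical Gerencs\'er--Gy\'arf\'as theorem, which says that in any $2$-edge-coloring of $K_n$ there is a monochromatic path on $\lceil 2n/3 \rceil + 1$ vertices. My plan is to reduce to (a variant of) that theorem by first \emph{cleaning up} the $\varepsilon\binom{k}{2}$ missing edges and then running an adapted extension/rotation argument on the cleaned-up graph. The loss from each of these two stages will be $O(\sqrt{\varepsilon})k$, which is exactly the slack $110\sqrt{\varepsilon}k$ that appears in the conclusion.

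\textbf{Cleanup step.} Set $\tau = 5\sqrt{\varepsilon}$. If more than $\tau k$ vertices of $G$ had degree strictly less than $(1-\tau)(k-1)$, the number of non-edges of $G$ would exceed $\tfrac12 \tau^2 k(k-1) > \varepsilon\binom{k}{2}$, contradicting the hypothesis. Deleting these low-degree vertices yields a graph $G'$ on $k' \geq (1-\tau)k$ vertices with $\delta(G') \geq (1 - 2\tau)(k-1) \geq (1 - 10\sqrt{\varepsilon})k'$. Any monochromatic path in $G'$ is a monochromatic path in $G$, so it suffices to show that $G'$ contains a monochromatic path of length at least $\tfrac{2}{3}k' - O(\sqrt{\varepsilon})k'$ in any 2-coloring.

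\textbf{Finding a long monochromatic path in $G'$.} Fix a 2-coloring of $E(G')$ and let $P = x_0 x_1 \cdots x_\ell$ be a longest monochromatic path, WLOG red. Set $W = V(G')\setminus V(P)$, and suppose for contradiction that $\ell < \bigl(\tfrac{2}{3} - 90\sqrt{\varepsilon}\bigr)k'$, so $|W| \geq \bigl(\tfrac{1}{3} + 90\sqrt{\varepsilon}\bigr)k'$. By maximality of $P$, every edge in $G'$ from $\{x_0, x_\ell\}$ to $W$ is blue; combined with the minimum degree bound, each of $x_0, x_\ell$ has at least $|W| - 10\sqrt{\varepsilon}k' \geq \bigl(\tfrac{1}{3} + 80\sqrt{\varepsilon}\bigr)k'$ blue neighbors in $W$. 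I would then invoke a classical P\'osa-type rotation on $P$: repeatedly performing rotations at the red endpoint produces a set $X \subseteq V(P)$ of size $\Omega(k')$ of ``alternate red endpoints'' (vertices $x \in V(P)$ such that there is a red path of length $\ell$ from $x$ to $x_\ell$), and by the same maximality argument, all edges from $X$ to $W$ are blue. A double-counting against the pairs $(x, w) \in X \times W$ (using $\delta(G') \geq (1 - 10\sqrt{\varepsilon})k'$) now produces two vertices $u, v \in W$ connected to the endpoints of a rotated red path via blue edges, together with a blue path inside $W \cup V(P)$ that yields a blue path strictly longer than $\ell$. This contradicts the maximality of $P$ in its own color after applying the same analysis with the roles of red and blue swapped.

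\textbf{Main obstacle.} The Gerencs\'er--Gy\'arf\'as proof in the complete graph uses freely that every two vertices are adjacent, which lets the rotation/extension moves succeed without loss. In the sparse setting, each vertex is missing up to $10\sqrt{\varepsilon}k$ edges, so the rotations and the blue-path construction each leak a $\Theta(\sqrt{\varepsilon})k$ term. The technical heart of the argument is bookkeeping these losses: one must ensure that the rotation set $X$ has size linear in $k'$ even when some potential rotations are blocked by non-edges, and that within $X \times W$ enough blue edges remain to ``close'' the contradiction path. Tracking constants carefully through the cleanup step, the rotation step, and the final closure produces the explicit $110\sqrt{\varepsilon}k$ slack stated in the lemma.
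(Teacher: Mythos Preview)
The paper does not prove this lemma at all; it is quoted verbatim as Theorem~1.3 of Letzter~\cite{Shoham} and used as a black box to deduce \Cref{cor:RamseyRandom2}. So there is no ``paper's own proof'' to compare your attempt against.

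That said, your sketch has a genuine gap in the rotation step. You claim that P\'osa-type rotations on the longest red path $P$ produce a set $X\subseteq V(P)$ of alternate red endpoints with $|X|=\Omega(k')$. But rotations at the endpoint $x_0$ use \emph{red} edges from $x_0$ into $V(P)$, and nothing in your setup forces there to be many such edges. For a concrete obstruction, suppose the red graph on $V(G')$ consists solely of a path on $\lfloor(2/3-90\sqrt{\varepsilon})k'\rfloor$ vertices and every other edge of $G'$ is blue. Then $P$ is exactly this red path, $x_0$ has a single red neighbour $x_1$, no rotation is possible, and $|X|=1$. The desired conclusion still holds in this example (the blue graph is nearly complete and trivially contains a long path), but your proposed mechanism does not detect it. More generally, the red subgraph on $V(P)$ can be arbitrarily sparse apart from the path edges, so you cannot get expansion for free.

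The second step is equally underspecified: even granting a large $X$ with all $X$--$W$ edges blue, you assert that double counting ``produces \ldots a blue path inside $W\cup V(P)$ that yields a blue path strictly longer than $\ell$'', but you never say how this path is built. A dense blue bipartite graph between $X$ and $W$ gives an alternating blue path of length about $2\min(|X|,|W|)$, which only exceeds $\ell$ if $|X|\gtrsim |W|>k'/3$; you have not established this, and as the example above shows it can fail badly. A correct argument has to treat the case where the endpoints of $P$ have mostly blue edges into $V(P)$ as well as into $W$ (so rotations stall but the blue graph is globally dense) separately from the case where rotations genuinely succeed; your sketch collapses these two regimes into one and loses the proof.
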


For the proof of \Cref{cor:RamseyRandom3} we need to adjust \Cref{lem:key}, Item 1.
Assume that $G$ and $S$ are as in \Cref{lem:key}, and assume that $S$ contains a tree $T$ with a matching $M$ of size $b/2$ where $b$ is even and $T$ is minimal with respect to containment.
Denote the edges of $M$ by $\{v_{2i-1}, v_{2i} \}$ for $i \in [b/2]$.
We combine the argument on connected matchings (see \cite{LuczakConnectedMat}, and also \cite{RamseyRandomPaths3col,RamseyConnectedMatching}) with a slight adaptation of the proof of the Key Lemma.
Let $W$ be a minimal closed walk on $T$ in which every edge is visited exactly twice, starting at an arbitrarily chosen root $v$ of the tree (such a walk is obtained, e.g., by a standard DFS-based argument, see, e.g., \cite{PokMonocPaths}).
In particular, every edge of $M$ is visited exactly twice in $W$.
Let $V_1, \ldots V_b$ be the clusters that correspond to the vertices $v_1, \ldots, v_b$ of the matching $M$, respectively.
For each $i\in [b]$, if $v_i$ is not a leaf of $T$ nor its neighbor in $M$ a leaf of $T$, we split $V_i$ into two subclusters, $U_i$ and $U'_i$, of equal sizes, up to possibly one vertex.
For convenience, $(U_{2i-1},U_{2i})$ corresponds to the first time that the walk $W$ is going through the edge $\{v_{2i-1},v_{2i}\}$, and $(U'_{2i-1},U'_{2i})$ corresponds to the second time.
Assume that $\{v_{2i-1}, v_{2i} \}$ is an edge in $M$ where both its vertices are not leaves of $T$, for some $i\in [b/2]$.
Then we embed a copy of $T^{(r, h)}_{\ell}$ in $(U_{2i-1}, U_{2i})$ and in $(U'_{2i-1}, U'_{2i})$, for an appropriate choice of parameters $r, h, \ell$, such that the leaf sets are embedded each in a different subcluster.
Denote the corresponding leaf sets by $L_{2i-1},L_{2i},L'_{2i-1},L'_{2i}$ and recall that they are of size $\varepsilon m$.
For edges $\{v_{2i-1}, v_{2i} \}$ in $M$ that contain a leaf (say, $v_{2i}$, for some $i\in [b/2]$, is a leaf of $T$), we embed a copy of $T^{(r,h)}_{\ell}$ in $(V_{2i-1}, V_{2i})$, for an appropriate choice of parameters $r, h, \ell$, such that both leaf sets are embedded in $V_{2i-1}$ (similarly to the proof of the Key Lemma).
Denote these leaf sets by $L_{2i-1}, L'_{2i-1}$.
Note that after embedding copies of $T^{(r,h)}_{\ell}$ in $V_1, \ldots, V_b$, each cluster still contains at least $2\varepsilon m$ vertices not touched by the tree embeddings.
We are now left with connecting the leaf sets of the embedded trees, to create a cycle of a prescribed length.

As $W$ is a closed walk, we may assume it starts with a vertex of $M$ which is a leaf of $T$ (note that any leaf of $T$ is a vertex of an edge in $M$, by minimality).
Hence, we can write $W$ as follows, $(e_1, P_1, e_2, P_2, \ldots, e_{b-1}, P_{b-1}, e_b)$, where $e_i$ is an edge of $M$, oriented according to $W$, for every $i\in [b]$, and $P_i$ is an oriented path which is a piece of the walk $W$, possibly empty, separating $e_i$ and $e_{i+1}$ and containing no edge of $M$, for every $i\in [b-1]$.
After having embedded trees in the clusters corresponding to edges of $M$ we now replace each path $P_i$ in the walk by a path of the same length and going through the same sequence of clusters.
We embed these paths one by one, as follows.
Assume for that for some $i\in [b-1]$ we have $P_i = (w_0, \ldots, w_t)$ for some $t\ge 1$ (where $w_0$ is the end vertex of $e_i$ and $w_t$ is the start vertex of $e_{i+1}$), and let $W_0, \ldots, W_t$ be the clusters corresponding to the vertices of $P_i$.
We start with a leaf set in $W_0$ of the tree corresponding to the current traversal of $e_i$, and we carefully choose the next vertex of the path in each cluster of the path $P_i$, using the $\varepsilon$-property we have for any two adjacent clusterts in $T$, so that we avoid all vertices that have already been used for embedding tress or previously embedded paths.
Note that we can always embed such paths as in each cluster $W_j$, $j\in \{0, \ldots, t\}$, we had at least $2\varepsilon m$ avaliable vertices to begin with, and in previous steps of the path ebmeddings we used at most $|T| \le k$ of them.
Doing this for every piece of the walk $W$, we obtain the desired cycle.

Thus, the proof of \Cref{cor:RamseyRandom3} follows from the statement below.

\begin{lemma}[Lemma~8 from \cite{RamseyConnectedMatching}]\label{lem:RamseyConnectedMatching}
	Let $0<\varepsilon<0.00025$ and let $k$ be large enough.
	Let $G$ be a graph on $k$ vertices with at least $(1-\varepsilon^7)\binom k2$ edges.
	Then, for every 3-coloring of the edges of $G$, there is a monochromatic component containing a matching saturating at least $(1/2 - 4.5\varepsilon)k$ vertices.
\end{lemma}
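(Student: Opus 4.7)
The plan is to argue by contradiction. Suppose that for some 3-edge-coloring of $G$, every monochromatic connected component $C$ has $\nu(C) < s$, where $s := \lceil(1/4 - 2.25\varepsilon)k\rceil$. The goal is to use this assumption to bound the total number of edges in $G$ and contradict the hypothesis $e(G) \geq (1-\varepsilon^7)\binom{k}{2}$.

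First, I would apply the classical Erd\H{o}s--Gallai theorem on the maximum number of edges in a graph with bounded matching number: any graph on $v$ vertices with $\nu \leq s$ has at most $\max\left\{\binom{2s+1}{2},\, \binom{s}{2}+s(v-s)\right\}$ edges. Applied to each monochromatic component and summed over all components in a fixed color, this gives roughly $e(G_i) \leq sk - \Omega(s^2\cdot t_i)$, where $t_i$ is the number of color-$i$ components. A naive summation over the three colors yields only $\sum_i e(G_i) \lesssim (3/4)k^2$, which is too weak. The sharp constant must therefore come from the interaction between the three color classes.

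For the refinement, I would use the Gallai--Edmonds structure theorem: for each component $C$ there is a vertex cover $X_C$ with $|X_C| \leq 2\nu(C) < 2s$, so every color-$i$ edge meets $Y_i := \bigcup_C X_C$, the union taken over color-$i$ components. Next, I would invoke a G\'arf\'as-type result to locate, in some color $i$, a connected component on at least $(1/2 + \Omega(1))k$ vertices (the density of $G$ being close to $1$ makes this possible). Let $A$ be the vertex set of this large color-$i$ component and let $B = V(G) \setminus A$. All edges between $A$ and $B$ must be in colors $\{1,2,3\}\setminus\{i\}$, a 2-colored bipartite graph on sets of size $\approx k/2$. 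A monochromatic connected matching of size $\geq (1/4)k$ in this bipartite structure is then forced by a 2-color Gy\'arf\'as--type argument, which would contradict the assumption, after carefully tracking the $\varepsilon^7$ error from missing edges.

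The hard part will be carrying out the precise combinatorial accounting that converts ``component of large size'' into ``component containing large matching,'' and ensuring that the error terms from missing edges (of order $\varepsilon^7\binom{k}{2}$) propagate to at most an additive $O(\varepsilon k)$ loss in the matching size. This is precisely the accounting carried out in detail in \cite{RamseyConnectedMatching}; my proposal is to follow that structure, with the main effort going into the case distinction on whether the large monochromatic component is close to spanning (in which case its own matching is large) or genuinely sits in a $(1/2 + O(\varepsilon))k$-sized part (in which case the cross-edges produce the desired monochromatic matching in another color).
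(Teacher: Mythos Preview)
This lemma is not proved in the paper at all: it is quoted verbatim as Lemma~8 of Figaj and \L uczak \cite{RamseyConnectedMatching} and used as a black box in the proof of \Cref{cor:RamseyRandom3}. There is therefore no ``paper's own proof'' to compare your proposal against.

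As for your sketch on its own merits: the broad architecture (find a large monochromatic component via a Gy\'arf\'as-type argument, then analyze the bipartite cross-structure in the remaining two colors) is indeed the shape of the Figaj--\L uczak argument, but what you have written is a plan rather than a proof. The step ``a monochromatic connected matching of size $\geq (1/4)k$ in this bipartite structure is then forced by a 2-color Gy\'arf\'as-type argument'' hides essentially all of the work: one needs the matching to lie in a single \emph{connected} component, not merely to be monochromatic, and the 2-color bipartite case already requires a careful case analysis (this is where most of the effort in \cite{RamseyConnectedMatching} goes). Your invocation of Gallai--Edmonds and the vertex covers $Y_i$ is not obviously connected to the rest of the argument and does not appear in the original proof. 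If you intend to actually carry this out rather than cite it, you would need to supply the full 2-color bipartite analysis, which is several pages in the source.
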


\Cref{cor:RamseyRandomOddr2,cor:RamseyRandomOddr3} follow from the statements below due to \L uczak \cite{LuczakConnectedMat}.

\begin{lemma}[Lemma 8 from \cite{LuczakConnectedMat}]\label{lem:AlmostRamseyOdd2}
	Let $0<\eta<10^{-5}$ and $k\geq \eta^{-49}$.
	Furthermore, let $G$ be a graph with $k$ vertices and at least $(1-14\eta^3)\binom k2$ edges.
	Then  every 2-coloring of the edges of $G$ leads to a monochromatic odd cycle of length at least $(\frac 12 - \frac {2\eta}{5})k$.   
\end{lemma}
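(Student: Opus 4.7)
The plan is to adapt the connected-matching philosophy behind Łuczak's proof, exploiting that $G$ is already nearly complete (so no sparse Regularity Lemma is needed). Let $c:E(G)\to\{R,B\}$ be the given $2$-coloring, $G_R,G_B$ its color classes, and $\delta=14\eta^3$. Without loss of generality $e(G_R)\geq \tfrac12 e(G)\geq (\tfrac14-\tfrac{\delta}{2})k^2$. The argument splits along a dichotomy asking whether $G_R$ is \emph{close to bipartite} or \emph{far from bipartite}, using the parameter $\mu=\tfrac{\eta}{10}$.

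\textbf{Close-to-bipartite case.} Suppose $G_R$ admits a vertex partition $V=A\cup B$ with $e_R(A)+e_R(B)\leq \mu\binom{k}{2}$. A standard max-cut computation, combined with the lower bound $e(G_R)\geq(\tfrac14-7\eta^3)k^2$, forces both $|A|,|B|\in\bigl[(\tfrac12-\sqrt{\tfrac{\eta}{5}})k,\,(\tfrac12+\sqrt{\tfrac{\eta}{5}})k\bigr]$. Now restrict attention to $A$: every edge of $G[A]$ that is not blue is either a red edge inside $A$ or a non-edge of $G$, and the total number of these is at most $(\mu+\delta)\binom{k}{2}=O(\eta)\binom{|A|}{2}$. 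Hence $G_B[A]$ is an almost-complete graph on $|A|$ vertices, so by \Cref{thm:woodall} combined with \Cref{re:gtn}, it contains cycles of every length (in particular every odd length) up to $(1-O(\sqrt{\eta}))|A|\geq(\tfrac12-\tfrac{2\eta}{5})k$, and we are done.

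\textbf{Far-from-bipartite case.} Now every bipartition of $V$ leaves more than $\mu\binom{k}{2}$ red edges inside the parts, so $G_R$ is genuinely non-bipartite and dense. I would first apply \Cref{thm:ErdosGallaiCycles} to $G_R$ (whose edge count exceeds $\tfrac12(k-1)(t-1)$ with $t=\lfloor(\tfrac12-\tfrac{\eta}{10})k\rfloor$) to extract a red cycle $C^\ast$ of length at least $(\tfrac12-\tfrac{\eta}{10})k$. If $C^\ast$ is odd we are done; otherwise I adjust its parity via an argument analogous to Step III in the proof of \Cref{lem:OddCycleS}. Specifically, set $R^\ast:=G_R-V(C^\ast)$; this residual graph still has strictly more than $\mu\binom{k}{2}/2$ red edges, so it contains a red odd cycle $C_0$. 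Using Menger-type connectivity (a consequence of $G$ being nearly complete and far-from-bipartite red structure, so $G_R$ has linear minimum degree after removing a small number of low-degree vertices as in the proof of \Cref{cl:TightOdd}), I extract two vertex-disjoint red paths $P_1,P_2$ from $C^\ast$ to $C_0$ whose endpoints on $C^\ast$ are close along $C^\ast$ and whose endpoints on $C_0$ can be connected through $C_0$ with a piece of chosen parity. Splicing these pieces together yields a red odd cycle of length at least $(\tfrac12-\tfrac{2\eta}{5})k$.

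The main obstacle will be the final parity-correction step in the far-from-bipartite case: one must produce the two short connecting paths without eating more than $\tfrac{\eta}{10}k$ extra vertices from $C^\ast$. This requires showing that $G_R$ has a linear-connectivity core after peeling, so that Menger's theorem supplies many internally vertex-disjoint paths between any two $\Theta(k)$-sized sets. The assumption $k\geq \eta^{-49}$ gives ample slack in all the density/connectivity estimates to absorb the small polynomial losses in $\eta$ incurred along the way.
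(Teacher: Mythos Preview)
The paper does not prove this lemma at all: it is quoted verbatim as Lemma~8 from \L uczak's paper \cite{LuczakConnectedMat} and used as a black box. So there is no ``paper's own proof'' to compare against; your proposal must stand on its own.

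It does not. In the close-to-bipartite case you correctly deduce that the two sides $A,B$ of the near-bipartition of $G_R$ have sizes $(\tfrac12\pm O(\sqrt{\eta}))k$, and then look for a blue odd cycle inside, say, $G_B[A]$. But even a Hamilton cycle in $A$ has length at most $|A|\le(\tfrac12+O(\sqrt{\eta}))k$, and you have no control forcing $|A|$ to be on the large side; if $|A|=(\tfrac12-c\sqrt{\eta})k$ then the longest cycle you can produce inside $A$ has length $(\tfrac12-c\sqrt{\eta})k$, which is strictly smaller than the target $(\tfrac12-\tfrac{2\eta}{5})k$ for small $\eta$. The lemma demands an error linear in $\eta$, and the max-cut computation alone only yields $\sqrt{\eta}$; getting down to $O(\eta)$ is precisely where \L uczak's argument does real work (he builds the long odd cycle using vertices from \emph{both} sides of the near-bipartition, not just one).

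The far-from-bipartite case has two further gaps. First, after deleting $V(C^\ast)$ (which has about $k/2$ vertices) you assert that the residual red graph $R^\ast$ still contains an odd cycle because it has ``more than $\mu\binom{k}{2}/2$ red edges''. Edge count alone never forces an odd cycle, and the far-from-bipartite hypothesis is on $G_R$, not on $G_R-V(C^\ast)$; deleting $\Theta(k)$ vertices can remove $\Theta(k^2)$ edges and certainly can destroy all odd cycles. Second, your Menger step needs linear vertex-connectivity in $G_R$, but $e(G_R)\ge(\tfrac14-o(1))k^2$ together with being far from bipartite does not give that: $G_R$ can have isolated vertices, and even after peeling low-degree vertices you have not argued that the surviving graph is $2$-connected, let alone $\Omega(k)$-connected. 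Both issues require substantially more care than the sketch indicates.
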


\begin{lemma}[Lemma 9 from \cite{LuczakConnectedMat}]\label{lem:AlmostRamseyOdd3}
	For every $0<\eta<10^{-5}$ and $k\geq \eta^{-50}$ the following holds.
	If $G$ is a graph with $k$ vertices and at least $(1-\eta^3)\binom k2$ edges, then  every 3-coloring of the edges of $G$ leads to a monochromatic odd cycle of length at least $(\frac 14 - \frac {3\eta}{20})k$.   
\end{lemma}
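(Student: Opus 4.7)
My plan is to follow the \emph{connected matchings} technique pioneered by \L uczak. The key concept is that of a monochromatic non-bipartite connected subgraph containing a large matching; such an object can always be converted into a long monochromatic odd cycle of roughly twice the matching size, so it suffices to find it in some color.

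First, I would prove a \emph{conversion lemma}: if a graph $H$ on $k$ vertices with at least $(1-\eta^3)\binom{k}{2}$ edges is $3$-colored and some color class $G_i$ contains a non-bipartite connected subgraph with matching of size $m$, then $G_i$ contains an odd cycle of length at least $2m - O(\eta k)$. The idea is to build a long alternating trail by chaining matching edges via short connecting walks inside the component; the density $(1-\eta^3)\binom{k}{2}$ forces such connecting walks to have length $O(\eta k)$ in aggregate. The resulting cycle may have the wrong parity, but this can be corrected by rerouting through a short odd cycle that exists inside the component by non-bipartiteness.

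Second, the main task is the \emph{existence} of such a monochromatic structure: in every $3$-coloring of $G$ with $e(G) \ge (1-\eta^3)\binom{k}{2}$, some color class contains a non-bipartite connected subgraph with a matching of size at least $\left(\tfrac{1}{8} - \tfrac{3\eta}{40}\right)k$. To prove this, I would argue by contradiction. If no color has such a structure, then by the Gallai--Edmonds decomposition, most edges of each color class $G_i$ lie within essentially bipartite blobs, with only $O(\eta k)$ vertices carrying the non-bipartite components. Refining these three near-bipartitions simultaneously partitions $V(G)$ into at most $8$ cells, and an edge of color $i$ must either cross the $i$th bipartition or be absorbed into an $O(\eta k)$-sized exceptional set. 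Summing the admissible edge counts over the three colors and the exceptional parts yields an upper bound strictly below $(1-\eta^3)\binom{k}{2}$ when the matching threshold $\left(\tfrac{1}{8} - \tfrac{3\eta}{40}\right)k$ is violated in every color, a contradiction.

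The main obstacle is the structural/counting step in the second part: one has to handle the overlaps between the three nearly-bipartite structures with some care, since a pair of bipartitions can be aligned or ``anti-aligned'' on large sub-cells, and the final constant $\tfrac{1}{4} - \tfrac{3\eta}{20}$ is tight enough that only a precise balanced case analysis of these eight cells and the exceptional part gives the required contradiction. Everything else (the conversion lemma, the Gallai--Edmonds reduction, and assembling the final odd cycle from the connected matching) is comparatively routine once this structural bound is in hand.
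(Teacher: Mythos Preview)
The paper does not prove this lemma at all: it is quoted verbatim as Lemma~9 of \L uczak~\cite{LuczakConnectedMat} and used as a black box to feed into the Key Lemma machinery. So there is no ``paper's own proof'' to compare against; your proposal is effectively an attempt to reconstruct \L uczak's original argument.

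Your plan is broadly in the right spirit --- \L uczak's proof does go through monochromatic non-bipartite components containing large matchings --- but your conversion step is stated too loosely to stand as written. The assertion that ``the density $(1-\eta^3)\binom{k}{2}$ forces such connecting walks to have length $O(\eta k)$ in aggregate'' does not follow: the density hypothesis is on the \emph{host} graph $G$, not on the monochromatic component you are working inside, and a monochromatic component can be very sparse. You cannot simply chain matching edges by short walks and charge the slack to the global density. In \L uczak's actual argument the passage from a connected matching to a long odd cycle is not done by an ad hoc density/diameter bound; it is done structurally, essentially by building a long path through the matching (using connectivity of the component to link consecutive matching edges one at a time, with no length control needed because the matching itself supplies the length) and then fixing the parity via a detour through a short odd cycle inside the non-bipartite component. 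The second part of your plan --- the Gallai--Edmonds / eight-cell counting to rule out the ``all colors essentially bipartite with small matchings'' scenario --- is the genuinely hard structural core, and your description of it is reasonable, but be aware that the constants are tight and the case analysis in \L uczak's paper is delicate; your sketch does not yet indicate how you would actually execute it.
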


For odd cycles with $r\geq 4$ colors, an equivalent Ramsey-type argument is not known yet.
Thus, we need to present an argument about the existence of a long monochromatic odd cycle in an almost compete $r$-colored graph. The proof of \Cref{cor:RamseyRandomOddr} will then immediately follow.

\begin{lemma}\label{cl:RamseyAlmost-rcol}
	Let $r>2$ be an integer and let $0<\beta<1/2^{r+1}$. Let $k$ be large enough as a function of $r$, and let $G$ be a graph on $k$ vertices with at least $(1-\beta)\binom k2$ edges. Let $c:E(G)\to [r]$ be an $r$-coloring of its edges. Then there exists a monochromatic odd cycle of length at least $\frac k{r2^{r+4}}$.
\end{lemma}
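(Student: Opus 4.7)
The plan is to proceed by induction on the number of colors $r$. The base case $r=3$ follows directly from \Cref{lem:AlmostRamseyOdd3}: with $\eta=\beta^{1/3}$ that lemma yields a monochromatic odd cycle of length at least $(1/4-3\eta/20)k$, easily exceeding the target $k/(3\cdot 2^{7})=k/384$ for small $\beta$.

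For the inductive step $r\ge 4$, I would apply pigeonhole to select a ``majority'' color (say color~$1$) with $|E_1|\geq(1-\beta)\binom{k}{2}/r$ edges, and split into two cases. \emph{Case 1:} if the color-$1$ graph $G_1$ contains an odd cycle of length at least $L:=k/(r\cdot 2^{r+4})$, we are done. \emph{Case 2:} otherwise, I would extract a subset $U\subseteq V$ of size $|U|\ge \alpha k$ (for some $\alpha=\alpha(r)$ to be tuned) such that $e_{G_1}(U)\le \kappa\binom{|U|}{2}$ for a small $\kappa$. Granted such $U$, the induced subgraph $G[U]$ carries at least $(1-\beta')\binom{|U|}{2}$ edges in the remaining colors $\{2,\ldots,r\}$ with $\beta'\le \beta/\alpha^2+\kappa$; tuning $\alpha$ and $\kappa$ so that $\beta'<1/2^{r}$, the induction hypothesis produces a monochromatic odd cycle of length at least $|U|/((r-1)\cdot 2^{r+3})\ge k/(r\cdot 2^{r+4})$, as required.

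The main obstacle is the structural extraction in Case~2: starting from the sole hypothesis that $G_1$ has no odd cycle of length $\geq L$, one must locate a large subset $U$ essentially avoiding color~$1$. The guiding intuition is that $G_1$ must be close to bipartite, with any non-bipartite substructure confined to small pieces of size $<L$, so iteratively deleting the vertices of short odd cycles should yield a bipartite subgraph of $G_1$ whose larger bipartition class forms a candidate for $U$ (possibly augmented with removed vertices of small color-$1$ degree). The quantitative challenge is twofold: one needs $|U|$ close to $k$ rather than merely $k/2$—note that the naive $|U|\approx k/2$ inflates the density parameter by a factor of $\binom{k}{2}/\binom{|U|}{2}\approx 4$, which overruns the narrow induction margin from $\beta<1/2^{r+1}$ up to $\beta'<1/2^{r}$—and simultaneously $\kappa$ must be kept well below $1/2^{r+1}$. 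Since $G_1$ may carry only $\Omega(k^{2}/r)<k^{2}/4$ edges, the dense-regime odd-cycle lemma \Cref{lem:OddCycleS} cannot be invoked directly as in Subcase~2.2 of \Cref{cl:TightOdd}, and a finer structural analysis of graphs of bounded odd circumference is called for.
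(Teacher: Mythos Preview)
Your proposal has a genuine gap that you yourself flag: the extraction of the large subset $U$ in Case~2 is left undone, and the quantitative obstruction you describe is real and not easily circumvented. If the majority color $G_1$ has no long odd cycle, the most one can expect structurally is that $G_1$ is close to bipartite; the natural candidate for $U$ is then one side of the bipartition, of size roughly $k/2$. Passing to $G[U]$ inflates the missing-edge density by a factor of about $4$, turning $\beta<1/2^{r+1}$ into $\beta'<1/2^{r-1}$, which overshoots the inductive threshold $1/2^{r}$ by a factor of $2$. There is no evident way to push $|U|$ substantially above $k/2$ while keeping color~$1$ sparse inside $U$: the graph $G_1$ may genuinely carry $\Theta(k^2/r)$ edges arranged as a nearly balanced complete bipartite graph, in which case every set of size $(1/2+\delta)k$ already contains $\Omega(\delta^2 k^2)$ color-$1$ edges. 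The induction simply does not close with these parameters, and you have not supplied an alternative mechanism.

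The paper abandons induction entirely and argues directly. The key step is a chromatic-number argument: if \emph{every} color class were $\varepsilon$-close to bipartite (with $\varepsilon=1/(r2^{r+2})$), then after deleting at most $r\varepsilon k^2$ edges the whole graph would be $2^r$-colorable; but a graph on $k$ vertices with $(1-\beta-2r\varepsilon)\binom{k}{2}$ edges contains, by Tur\'an's theorem, a clique of size exceeding $2^r$ (this is exactly where the hypothesis $\beta<1/2^{r+1}$ enters), a contradiction. Hence some single color class $G_{i_0}$ is $\varepsilon$-far from bipartite. The rest of the proof works entirely inside $G_{i_0}$: a block-decomposition argument locates a $2$-connected block $B$ of linear size that is still quantitatively far from bipartite (hence has linearly many edges); one then takes a short odd cycle $C_1$ in $B$, finds a long cycle $C_2$ in $B\setminus V(C_1)$ via Erd\H{o}s--Gallai (\Cref{thm:ErdosGallaiCycles}), and, if $C_2$ is even, splices $C_1$ and $C_2$ together through two vertex-disjoint paths guaranteed by $2$-connectivity, exactly as in the proof of \Cref{lem:OddCycleS}. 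This route sidesteps the density-inflation problem altogether.
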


\begin{proof}
	 Let $\varepsilon=\frac 1{r2^{r+2}}$. 
	 For $i\in [r]$, let $G_i\subseteq G$ be the graph obtained by the edges colored by color $i$ according to the coloring $c$.
	 We first show that there is a color $i\in [r]$ for which $G_i$ is $\varepsilon$-far from being bipartite.
	 Assume that $G_i$ is $\varepsilon$-close to being bipartite, for every $i\in [r]$.
	 Then, as $\chi(H)\leq \Pi_{i\in [r]}\chi(H_i)$ for any graph $H$ and any pratition of its edges, we obtain that $G$ is $\varepsilon r$-close to being $2^r$-colorable.
	 Let $G'$ be a subgraph of $G$ with $e(G')\geq (1-\beta)\binom k2-\varepsilon rk^2\geq (1-\beta-2\varepsilon r)\binom k2$ such that $G'$ is  $2^r$-colorable.
	 Note that since $e(G')\geq (1-\beta-2\varepsilon r)\binom k2$, then by Tur\'an's theorem there is a clique in $G'$ of size $\frac 1{\beta+2\varepsilon r}$ and thus $\chi(G')> \frac 1{\beta+2\varepsilon r}$. Since $\beta< \frac 1{2^r}-2\varepsilon r$ we get a contradiction. 
	 
	 Let $i_0$ be such that $G_{i_0}$ is $\varepsilon$-far from being bipartite. 
	
	We next find a subgraph in $G_{i_0}$ of a linear size which is $2$-connected and is $\frac \varepsilon3$-far from being bipartite.
	For this, we define an auxiliary graph $H$.
	For each vertex $v\in V(G_{i_0})$ we look at the maximal 2-connected subgraph (a \textit{block}, sometimes also called a \textit{biconnected component}) that contains $v$ (might also be a single vertex or a bridge).
	Let $B_1,\dots ,B_h$ be those blocks (note that $|B_i\cap B_j|\leq 1$) and write $|B_i|=b_i$.
	Let $U=\{u_1,\dots,u_t\}$ be the intersection vertices, that is, for every $s\in [t]$, there are $i,j\in[h]$ such that $B_i\cap B_j=u_s$. 
	Let $H$ be the following bipartite graph (also called the \textit{block-cut tree} or the \textit{block decomposition graph}, see, e.g., \cite{West} p. 155).
	$V(H)=[h]\dot{\cup} [t]$, and $E(G)= \{ij \mid u_j\in B_i \}$. By definition, $H$ is a forest and thus $k\leq \sum b_i\leq 2k$.
	Indeed, we look at a leaf in $H$, $v_1$, by the definition it corresponds to a block $B_{j_1}$.
	Remove this block from the graph $G_{i_0}$.
	Then we obtain a graph with $h-1$ blocks and $k-b_{j_1}$ vertices.
	By induction, $\sum_{i\in [h]\setminus\{j_1 \}} b_i\leq 2(k-b_{j_1})$.
	Since $v_1$ was a leaf, we obtain that $\sum_{i\in [h]} b_i\leq 2(k-b_{j_1})+2b_{j_1}=2k$.  
	Assume that each block $B_j$ is such that at most $\frac \varepsilon 3 k b_j$ edges should be removed from it in order to make it bipartite (that is, is at most $\frac{\varepsilon k}{3b_j}$-far from being bipartite).
	Then, as $H$ is bipartite, at most  $\frac \varepsilon 3 k \sum b_j\leq \frac 23\varepsilon k^2$ edges can be removed from $G_{i_0}$ to make it bipartite.
	This is a contradiction.
	Then there is $B_j\subseteq G_{i_0}$ and is $\frac{\varepsilon k}{3 b_j}$-far from being bipartite.
	In particular this means that $\binom {b_j}2> \frac 23\varepsilon k b_j$, so we obtain $B_j$ which is $2$-connected, with $b_j\geq \frac 43\varepsilon k$.
	
	We now look at $B:=B_j$, a subgraph of $G$ which has $k' \coloneqq b_j \geq \frac 43\varepsilon k$ vertices, is 2-connected, and is $\frac{\varepsilon k}{3k'}$-far from being bipartite.
	Then in particular $e(B)\geq \frac 23\varepsilon kk'$.
	Let $C_1$ be an odd cycle in $B$.
	If $v(C_1)\geq \frac 13 \varepsilon k$ then we are done.
	Otherwise, let $B'':=B\setminus V(C_1)$ be the graph obtained by removing the vertices of $C_1$ from $B$.
	Then $v(B'')=v(B)-v(C_1)\geq k'-\frac 13 \varepsilon k$ and $e(B'')\geq \frac 13 \varepsilon k k'$.
	By \Cref{thm:ErdosGallaiCycles} we have a cycle $C_2$ of length at least $\frac 23 \varepsilon k$.
	If $C_2$ is odd then we are done.
	Otherwise, by 2-connectivity, there are two vertex-disjoint paths $P_1,P_2$ from $C_1$ to $C_2$.
	Let $P_3$ be the longer path on $C_2$ that connects $P_1$ and $P_2$. Then $v(P_1\cup P_2\cup P_3)\geq \frac 13 \varepsilon k$.
	Since $C_1$ is odd, there are two paths that connect $P_1$ and $P_2$ along $C_1$, one is even and one is odd.
	We choose the one that creates an odd cycle together with $P_1\cup P_2\cup P_3$.
	This gives an odd cycle of length at least $\frac 13 \varepsilon k$.
\end{proof}

\section*{Acknowledgements}
The second author would like to thank Oliver Riordan and Eoin Long for helpful discussions.

\small{
	
}

\end{document}